\makeatletter\@addtoreset{equation}{section}\makeatother
\newtheorem{remark}{Remark}
\newtheorem{lemma}{Lemma}
\newtheorem{theorem}{Theorem}
\definecolor{lightgreen}{rgb}{0.22,0.50,0.25}
\definecolor{lightblue}{rgb}{0.22,0.45,0.70}
\newcounter{constantsnumber}
\newcommand{\CJ}{\mathcal{J}}
\newcommand{\C}{\mathcal{C}}
\newcommand{\bM}{\mbox{\boldmath{$M$}}}
\newcommand{\bA}{\mbox{\boldmath{$A$}}}
\newcommand{\bP}{\mbox{\boldmath{$P$}}}
\newcommand{\bd}{\boldsymbol{d}}
\newcommand{\be}{\boldsymbol{e}}
\newcommand{\fb}{\boldsymbol{f}}
\newcommand{\bH}{\mathbf{H}}
\newcommand{\bn}{\mbox{\boldmath{$n$}}}
\newcommand{\bW}{\mathbf{W}}
\newcommand{\bu}{\boldsymbol{u}}
\newcommand{\bS}{\mbox{\boldmath{$S$}}}
\newcommand{\bt}{\mbox{\boldmath{$t$}}}
\newcommand{\bU}{\mbox{\boldmath{$U$}}}
\newcommand{\bv}{\boldsymbol{v}}
\newcommand{\bV}{\mbox{\boldmath{$V$}}}
\newcommand{\bL}{\mathbf{L}}
\newcommand{\bw}{\boldsymbol{w}}
\newcommand{\bx}{\mbox{\boldmath{$x$}}}
\newcommand{\bX}{\mathbf{X}}
\newcommand{\bY}{\mathbf{Y}}
\newcommand{\by}{\mbox{\boldmath{$y$}}}
\newcommand{\bz}{\mbox{\boldmath{$z$}}}
\newcommand{\bZ}{\mathbf{Z}}
\newcommand{\bepsilon}{\mbox{\boldmath{$\varepsilon$}}}
\newcommand{\bsigma}{\boldsymbol{\sigma}}
\newcommand{\btau}{\boldsymbol{\tau}}
\newcommand{\bxi}{\mbox{\boldmath{$\xi$}}}
\newcommand{\sym}{\rm sym}
\newcommand{\bbL}{\mathbb{L}}
\newcommand{\bbH}{\mathbb{H}}
\newcommand{\bbLsym}{\mathbb{L}_{\mathrm{sym}}^2(\Omega)}
\newcommand{\bbLskew}{\mathbb{L}_{\mathrm{skew}}^2(\Omega)}
\newcommand{\bbHsym}{\mathbb{H}_{\mathrm{sym}}(\bdiv;\Omega)}
\newcommand{\bzero}{\boldsymbol{0}}
\newcommand{\BBR}{\mbox{$\mathbb{R}$}}
\newfont{\twelvemsb}{msbm10 at 11.6pt}
\renewcommand{\div}{\mathop{\rm div}\nolimits}
\newcommand{\bdiv}{\mathop{\mathbf{div}\,}\,}
\renewcommand{\max}{\mathop{\rm max}\limits}
\newcommand{\tr}{\mathop{\rm tr}}
\definecolor{mygrey}{rgb}{0.65,0.65,0.6375}
\newcommand{\cgrey}[1]{{\leavevmode\color{mygrey}{#1}}}
\newcommand\bbR{\mathbb{R}}
\def\bdiv{\mathbf{div}}
\def\qan{{\quad\hbox{and}\quad}}
\def\qin{{\quad\hbox{in}\quad}}
\newcommand{\ds}{\displaystyle}
\def\qon{{\quad\hbox{on}\quad}}
\newcommand\bbI{\mathbb{I}}
\newcommand{\bbeta}{{\boldsymbol\eta}}
\def\rskew{\mathrm{skew}}
\newcommand{\bgamma}{{\boldsymbol{\gamma}}}
\newcommand{\bchi}{{\boldsymbol{\chi}}}
\newcommand{\rH}{\mathrm{H}}
\newcommand{\bzeta}{{\boldsymbol\zeta}}
\def\rt{\mathrm{t}}
\newcommand{\cJ}{\mathcal{J}}
\newcommand{\ubu}{\underline{\bu}}
\newcommand{\ubv}{\underline{\bv}}
\newcommand{\ubd}{\underline{\bd}}
\newcommand{\ube}{\underline{\be}}
\newcommand{\ubw}{\underline{\bw}}
\def\wh{\widehat}
\def\dist{\mathrm{dist}\,}
\numberwithin{equation}{section}
\numberwithin{remark}{section}
\numberwithin{theorem}{section}
\numberwithin{lemma}{section}
\numberwithin{figure}{section}
\numberwithin{table}{section}
\title{New twofold saddle-point formulations for Biot poroelasticity with porosity-dependent permeability\thanks{\textbf{Funding:} This work has been partially supported by the Monash Mathematics Research Fund S05802-3951284; by the Australian Research Council through the \textsc{Future Fellowship} grant FT220100496 and \textsc{Discovery Project} grant DP22010316; by the Ministry of Science and Higher Education of the Russian Federation within the framework of state support for the creation and development of World-Class Research Centers \textsc{Digital biodesign and personalized healthcare}  No. 075-15-2022-304; and by the National Research and Development Agency (ANID) of the Ministry of Science, Technology, Knowledge and Innovation of Chile through the postdoctoral program \textsc{Becas Chile} grant 74220026.}}
\author{Bishnu P.~Lamichhane\thanks{School of Mathematical \& Physical Sciences,
University of Newcastle,
University Drive,
Callaghan, NSW 2308, Australia;  {\tt Bishnu.Lamichhane@newcastle.edu.au}} 
\quad 
Ricardo Ruiz-Baier\thanks{School of Mathematics, Monash University, 9 Rainforest Walk, Melbourne, VIC 3800, Australia; and
                  World-Class Research Center ``Digital biodesign and personalised healthcare", Sechenov First Moscow State Medical University, Moscow, Russia; and
                  Universidad Adventista de Chile, Casilla 7-D, Chill\'an, Chile; {\tt Ricardo.RuizBaier@monash.edu}} \quad 
Segundo Villa-Fuentes\thanks{School of Mathematics, Monash University, 9 Rainforest Walk, Melbourne, VIC 3800, Australia; {\tt Segundo.VillaFuentes@monash.edu}}}
\begin{document}
\maketitle

\begin{abstract}
We propose four-field and five-field Hu--Washizu-type mixed formulations for nonlinear poroelasticity -- a coupled fluid diffusion and solid deformation process -- considering that the permeability depends on a linear combination between fluid pressure and dilation. As the determination of the physical strains is necessary, the first formulation is  written in terms of the primal unknowns of solid displacement and pore fluid pressure as well as the poroelastic stress and the infinitesimal strain, and it considers strongly symmetric Cauchy stresses. The second formulation imposes stress symmetry in a weak sense and it requires the additional unknown of solid rotation tensor. We study the unique solvability of the problem using the Banach fixed-point theory, properties of twofold saddle-point problems, and the Banach--Ne\v{c}as--Babu\v{s}ka theory. We propose monolithic Galerkin discretisations based on  conforming Arnold--Winther for poroelastic stress and displacement, and either PEERS or Arnold--Falk--Winther finite element families for the  stress-displacement-rotation field variables. The wellposedness of the discrete problem is established as well, and we show a priori error estimates in the natural norms. Some numerical examples are provided to confirm the rates of convergence predicted by the theory, and we also illustrate the use of the formulation in some typical tests in Biot poroelasticity. 
\end{abstract}

\noindent\textbf{Keywords:} \textit{Mixed finite element methods, 
Hu--Washizu formulation, nonlinear poroelasticity, twofold saddle-point problems, fixed-point operators.}

\noindent\textbf{AMS Subject Classification:} \textit{65N30, 65N15, 65J15, 76S05, 35Q74.}

\section{Introduction}
\subsection{Scope}
The coupling of interstitial fluid flow and solid mechanics in a porous medium has an important role in a number of socially relevant applications \cite{borregales2021iterative}. In particular, nonlinear poroelasticity equations arise, for example, in models of geomechanics and in the study of deformable soft tissues (such as filtration of aqueous humor through cartilage-like structures in the eye and with application in glaucoma formation). In the present work we focus on the case of fully saturated deformable porous media (the solid and fluid constituents of the mixture occupy a complementary fraction of volume in the macroscopic body) and in instances where the permeability  coefficient depends on the porosity (which is in turn related to the total amount of fluid in the poroelastic mixture) \cite{coussy04}. In the classical form for this class of problems the momentum and mass balance equations for a solid-fluid mixture are written in terms of the solid displacement of the porous matrix and the averaged interstitial pressure. Examples of analysis of existence and uniqueness of solution can be found in \cite{bociu2016analysis,bociu2021nonlinear,cao2013analysis,gaspar2016numerical,showalter2001partially,tavakoli2013existence,van2023mathematical}. These works expand the theory available for linear Biot consolidation problems using, for example, constructive Galerkin approximations together with Brouwer's fixed-point arguments with compactness and passage to the limit, the theory of monotone operators in Banach spaces and semigroups, abstract results on doubly nonlinear evolution equations, and the modification of the arguments to the case of pseudo-monotone nonlinear couplings using Br\'ezis' theory. One of the goals of this paper is to extend the previous analysis to the case of mixed formulations for the solid phase. Rewriting the governing equations in mixed form using the Hellinger--Reissner principle and writing the total poroelastic stress as a new unknown, is an approach employed already in the analysis of a number of mixed models for linear poroelasticity  \cite{ambartsumyan2020coupled,baerland17,elyes18,lee16,yi2014convergence} and in poroelasticity/free-fluid couplings \cite{ambartsumyan2019nonlinear,caucao2022multipoint,li20}. These formulations may incorporate also the tensor of rotations to impose in a weak manner the symmetry of the poroelastic stress tensor. There, in deriving the weak forms, one tests the constitutive equation for stress against a test function for stress.  In contrast, in the present treatment we also require the tensor of infinitesimal strains as an unknown since it is an important field acting in the coupling with the fluid phase mechanics via the nonlinear permeability (regarded as a function of the interstitial fluid pressure and the trace of the strain tensor).  In the context of  elasticity problems, the popular Hu--Washizu formulation \cite{Hu, Washizu} has  displacement, stress and strain tensors as three unknowns, and we note that the Hellinger--Reissner formulation mentioned above is a special case of the Hu--Washizu formulation (it is obtained after applying the Fenchel--Legendre transformation eliminating the strain from the latter formulation \cite{Bra01}).
 
Apart from the application of  Hu--Washizu formulations in many works for linear elasticity (see, e.g., \cite{DLR06,djoko06,LRW06,lamperti23,wagner20} and the references therein), the solvability analysis of the continuous and discrete twofold saddle-point mixed problems (including also error estimates) has been carried out in \cite{gatica2007dual,gatica2013priori} for Hencky-strain nonlinear elasticity, as well as for more recent models for stress-assisted diffusion coupled with poroelasticity  \cite{gomez23}. There, one tests the constitutive equation for stress against the test function associated with the space of infinitesimal strains. In that setting, a key  ingredient in the analysis is the assumption that the nonlinearity in the weak forms induces a Lipschitz continuous and strongly monotone operator (this last condition being required in a suitable kernel).  

In the present scenario the analysis requires to define a nonlinear operator $\bA:\bX\to \bX'$ where $\bX$ consists of square integrable and symmetric tensors and scalar functions in $\rH^1$. The nonlinearity is inherited from the nonlinear dependence of permeability on fluid pressure and on skeleton strains, and for some constitutive forms, it does not necessarily imply that $\bA$ is monotone. In this work we present two formulations, in the first we impose the symmetry of the stress tensor in a strong way, while in the second we impose the symmetry in a weak sense, using the rotation tensor as a further unknown. Then, similarly to \cite{cgs2021} (see also \cite{gatica03}) we have the nonlinear term inside the saddle-point structure, unlike \cite{CGO2021,cov2020} where the nonlinear term is associated with a perturbation of the saddle-point problem. Therefore we proceed by a fixed-point argument and consider a linear twofold saddle-point formulation that suggests the structure of a fixed-point operator. This map is shown to be well-defined (for this we use an appropriate adaptation of the theory from, e.g., \cite{howell2011inf}), to map a conveniently chosen ball into itself, and to be Lipschitz continuous. Then, by establishing a contracting property the unique solvability will be a consequence of Banach fixed-point theorem. Such an analysis hinges on data smallness assumptions, which involve boundary data, source terms, and permeability bounds. 

For the associated Galerkin schemes we employ, on the one hand, for the strong symmetry formulation, Arnold--Winther finite elements of degree $k \geq 1$ \cite{arnold-winther-0} to approximate the strain tensor, poroelastic stress tensor and displacement, and continuous piecewise polynomials of degree $k+1$ for the pore pressure; and we note that for this finite element family, one can also employ piecewise polynomials of degree $k+2$ for the symmetric strain tensor to maintain inf-sup stability.  On the other hand, for the weak symmetry formulation, we use the classical PEERS elements \cite{arnold84} to approximate the strain tensor, poroelastic stress tensor, displacement and the rotation tensor, and continuous piecewise polynomials of degree $k+1$ for the interstitial pressure (we also use a family based on Arnold--Falk--Winther elements \cite{arnold-2007}). Next we apply the same arguments utilised for the continuous problem to prove unique solvability. In addition, using standard tools and techniques for the error decomposition, and approximation properties of mentioned finite element spaces, we obtain the corresponding C\'ea estimate and rates of convergence.

\subsection{Outline} 
The content of this paper has been laid out as follows. In the remainder of this section we include notation conventions and preliminary results that are used throughout the manuscript. Section~\ref{sec:model} provides details of the model, describing the components of the balance equations and stating boundary conditions. Two weak forms and their properties are collected in Section~\ref{sec:weak}. Section~\ref{sec:wellp} is devoted to the analysis of solvability of the weak forms, using  arguments from the twofold saddle-point variant of the {Babu\v ska--Brezzi} theory from \cite{gatica03}. Next we address in Section \ref{sec:FE} the solvability and stability analysis of the discrete problem, where similar arguments are employed, following \cite{gatica2013priori}. A priori error estimates are derived in Section~\ref{sec:error},  and in Section~\ref{sec:numer} we collect   computational results, consisting in verification of  convergence and simulation of different cases on simple geometries. 

\subsection{Notation and preliminaries}

Let  $L^2(\Omega)$ be the set of all square-integrable functions in 
$\Omega \subset \BBR^d$ where $d \in \{2,3\}$ is the spatial dimension, and denote by $\bL^2(\Omega)=L^2(\Omega)^d$ its vector-valued counterpart and by $\bbL^2(\Omega)=L^2(\Omega)^{d\times d}$ its tensor-valued counterpart. We also write  
\[\bbLsym:=\{\btau \in \bbL^2(\Omega):\,\btau = \btau^{\tt t} \}, \quad 
\bbLskew:=\{\btau \in \bbL^2(\Omega):\,\btau = -\btau^{\tt t} \},
\]
to represent the symmetric and skew-symmetric tensors in $\Omega$ with each  component being square-integrable. Standard notation will be employed for Sobolev spaces $\rH^m(\Omega)$ with $m\geq 0$ (and we note that $\rH^0(\Omega)=L^2(\Omega)$). Their norms and seminorms are denoted as $\|\cdot\|_{m,\Omega}$ and $|\cdot|_{m,\Omega}$, respectively (as well as for their vector and tensor-valued counterparts $\bH^m(\Omega)$, $\bbH^m(\Omega)$) see, e.g., \cite{BS94}. 

As usual $\mathbb I$ stands for the identity tensor in $\bbR^{d\times d}$, 
and $|\cdot|$ denotes the Euclidean norm  in $\bbR^d$. Also, for any vector fields $\bv=(v_i)_{i=1,d}$  we set the gradient and divergence operators as
\[
\nabla \bv \,:=\, \left(\frac{\partial v_i}{\partial x_j}\right)_{i,j=1,d}\, \qan
\div\,\bv \,:=\, \sum_{j=1}^d \frac{\partial v_j}{\partial x_j}\,.
\]
In addition, for any tensor fields $\btau=(\tau_{ij})_{i,j=1,d}$
and $\bzeta = (\zeta_{ij})_{i,j=1,d}$, we let $\bdiv\,\btau$ be the divergence operator $\div$ acting along the rows of $\btau$, and define the transpose, the trace and the tensor inner product, respectively, as
\begin{equation*}
\btau^\rt := (\tau_{ji})_{i,j=1,d},\quad
\tr(\btau) := \sum_{i=1}^d\tau_{ii},\qan 
\btau:\bzeta := \sum_{i,j=1}^n\tau_{ij}\zeta_{ij}.
\end{equation*}
We also recall the Hilbert space
\[
\bH(\div;\Omega)\,:=\,\big\{\bz \in \bL^2(\Omega):\, \div\,\bz \in \mathrm{L}^2(\Omega)\big\},
\]
with norm $\|\bz\|_{\div;\Omega}^{2}:=\|\bz\|_{0,\Omega}^{2}+\|\div\,\bz\|_{0,\Omega}^{2}$,
and introduce the tensor version of $\bH(\div;\Omega)$ given by
\[
\bbH(\bdiv;\Omega)\,:=\,\big\{ \btau \in \bbL^2(\Omega):\quad \bdiv\,\btau \in \bL^2(\Omega)\big\}\,,
\]
whose norm will be denoted by $\|\cdot\|_{\bdiv;\Omega}$.


\subsection{Governing equations}\label{sec:model}
Let us consider a fully-saturated poroelastic medium (consisting of a mechanically isotropic and homogeneous fluid-solid mixture) occupying the open and bounded domain $\Omega$ in $\BBR^d$, with Lipschitz boundary $\Gamma$. The symbol $\bn$ will stand for the unit outward normal vector on the boundary. Let $\fb \in \bL^2(\Omega)$ be a prescribed body force per unit of volume (acting on the fluid-structure mixture) and let $g \in L^2(\Omega)$ be a net volumetric fluid production rate.  

Under the assumption of negligible gravitational effects as well as material deformations being sufficiently small, and varying sufficiently slowly so that inertial effects are considered negligible (see further details in \cite{showalter2000diffusion}), we have that  
 the balance of linear momentum for the solid-fluid mixture is written as 
\begin{equation}
\label{eq:mom} -\bdiv\, \bsigma = \fb \qquad \text{in $\Omega$},
\end{equation}
with $\bsigma$ being the total Cauchy stress tensor of the mixture (conformed by the effective solid stress and effective fluid stress), whose dependence on strain and on fluid pressure  is given by the constitutive assumption (or effective stress principle) 
\begin{equation}\label{eq:constitutive}
  \bsigma = \C \bd -\alpha p \mathbb{I} \qquad \text{in $\Omega$}.
\end{equation}
Here the skeleton displacement vector $\bu$ from the position $\bx \in \Omega$ is an unknown, the tensor $\bd = \bepsilon (\bu) := \frac{1}{2} (\nabla \bu + [\nabla \bu]^{\tt t} )$ is the infinitesimal strain, by $\C $ we denote the fourth-order elasticity tensor, also known as Hooke's tensor (symmetric and positive definite and  characterised by $\C \bd := \lambda(\tr\bd)\mathbb{I} + 2\mu\,\bd$), $\mathbb{I}$ is the identity second-order tensor, $\lambda$ and $\mu$ are the Lam\'e parameters (assumed constant and positive), $0\leq \alpha \leq 1$ is the Biot--Willis parameter, and $p$ denotes the Darcy fluid pressure (positive in compression), which is an unknown in the system. 

We also consider the balance of angular momentum, which in this context states that the total poroelastic stress is a symmetric tensor 
\begin{equation}\label{eq:angular}
    \bsigma = \bsigma^{\tt t}.
\end{equation}

The fluid content (due to both fluid saturation and local volume dilation) is given by 
\[\zeta = c_0 p + \alpha \div \bu,\]
where $c_0$ is the constrained specific storage (or storativity) coefficient. Using Darcy's law to describe the discharge velocity in terms of the fluid pressure gradient, we can write the balance of mass for the total amount of fluid in the mixture as $\partial_t\zeta - \div (\kappa(\phi) \nabla p)  = g$ in $\Omega \times (0,t_{\mathrm{end}})$, where $\kappa$ is the intrinsic permeability (divided by the fluid viscosity) of the laminar flow in the medium, a nonlinear function of porosity $\phi$.  In turn, in the small strains limit the porosity can be approximated by a linear function of the fluid content (see for example \cite[Section 2.1]{van2023mathematical}), and so -- making abuse of notation -- we can simply write $\kappa(\zeta)$.  Furthermore, after a backward Euler semi-discretisation in time with a constant time step and rescaling appropriately, we only consider the type of equations needed to solve at each time step and therefore we will concentrate on the form 
\begin{equation}\label{eq:mass}
c_0 p + \alpha \tr\bd - \div (\kappa(\bd,p) \nabla p)  = g \qquad \text{in $\Omega$}.
\end{equation}
Typical constitutive relations for permeability are, for example, of exponential or Kozeny--Carman type (see, e.g., \cite{ateshian10}) 
\begin{equation}\label{eq:kappa}
    \kappa(\zeta) = \frac{k_0}{\mu_f}\mathbb{I} + \frac{k_1}{\mu_f}\exp(k_2 \zeta)\mathbb{I}, \quad \kappa(\zeta) =   \frac{k_0}{\mu_f}\mathbb{I} +  \frac{k_1\zeta^3}{\mu_f(1-\zeta)^2}\mathbb{I},
\end{equation}
where $\mu_f$ denotes the viscosity of the interstitial fluid and $k_0,k_1,k_2$ are model constants. We note that in the case of incompressible constituents one has $c_0 = 0 $ and $\alpha = 1$, indicating that permeability depends only on the dilation $\tr\bd = \div\bu$ (see, e.g., \cite{bociu2016analysis}). We also note that even in such a scenario (of incompressible phases) the overall mixture is not necessarily incompressible itself. 
More precise assumptions on the behaviour of the permeability are postponed to Section~\ref{sec:wellp}. 

To close the system of equations, we consider non-homogeneous displacement boundary conditions for the momentum balance and non-homogeneous flux boundary conditions on the mass balance equation. For prescribed $\bu_\Gamma \in \bH^{1/2}(\Gamma)$ and $r_\Gamma \in \rH^{-1/2}(\Gamma)$ we set 
\begin{equation}
\label{eq:bc}
  \bu = \bu_\Gamma \quad \text{and}\quad  \kappa(\bd,p)\nabla p\cdot \bn = r_\Gamma \qquad \mbox{on } \Gamma.
\end{equation}


\section{Two weak formulations and preliminary properties}\label{sec:weak}
\subsection{Derivation of weak forms}
We proceed to test equation \eqref{eq:mom} against $\bv\in\bL^2(\Omega)$, to test the constitutive equation for strain $\bd = \bepsilon (\bu)$ against $\btau\in\bbHsym\,:=\,\big\{ \btau \in \bbLsym:\  \bdiv\,\btau \in \bL^2(\Omega) \big\}$, the equations \eqref{eq:constitutive} and \eqref{eq:mass}, by $\be\in \bbLsym$ and $q\in \rH^1(\Omega)$, respectively, integrate by parts and using the boundary conditions \eqref{eq:bc} naturally, we finally arrive at 
\begin{equation}\label{eq:weak-formulation-1}
\begin{array}{rlll}
\ds-\int_{\Omega} \bv \cdot \bdiv\, \bsigma   &=& \ds \int_{\Omega}  \fb \cdot \bv & \ds \forall\,\bv\in\bL^2(\Omega), \\ [2ex]
\ds-\int_{\Omega} \btau:\bd -\int_{\Omega} \bu \cdot \bdiv\, \btau  &=& \ds -\langle \btau\bn,\bu_\Gamma\rangle_\Gamma & \ds \forall\,\btau\in\bbHsym , \\ [2ex]
\ds\int_{\Omega} \C \bd:\be -\alpha \int_{\Omega} p\, \tr\be -\int_{\Omega} \bsigma:\be &=& 0 & \ds \forall\,\be\in\bbLsym, \\ [2ex]
\ds\int_{\Omega} \kappa(\bd,p) \nabla p \cdot \nabla q + c_0 \int_{\Omega} p\,q +  \alpha \int_{\Omega} q\, \tr\bd &=& \ds \int_{\Omega}  g\, q + \langle r_\Gamma,q\rangle_{\Gamma}& \ds \forall\,q\in \rH^1(\Omega),
\end{array}
\end{equation}
where $\langle\cdot,\cdot\rangle_\Gamma$ denotes the duality pairing between $\rH^{-1/2}(\Gamma)$ and its dual $\rH^{1/2}(\Gamma)$ with respect to the inner product in $L^2(\Gamma)$ (and we use the same notation in the vector-valued case). Note also that the balance of angular momentum \eqref{eq:angular} has been enforced as an essential condition in the functional space for poroelastic stress. 

Next, we notice that \eqref{eq:weak-formulation-1} can be regarded as a twofold saddle-point structure. In fact, let us adopt the following notation for  the Hilbert spaces for the  strain-pressure pair, the poroelastic stress, and the displacement:  
\[\bX:=\bbLsym\times \rH^1(\Omega),\qquad \bY:=\bbHsym \qan \bZ:=\bL^2(\Omega),\]
respectively. In addition, we group and order  the trial and test functions as follows:
$$\ubd:=(\bd,p)\in\bX,\quad \bsigma\in \bY, \quad \bu\in \bZ,$$
$$\ube:=(\be,q)\in\bX,\quad \btau\in \bY, \quad \bv\in \bZ,$$
where $\bX$, $\bY$, $\bX\times\bY$ and $\bZ$ are endowed with the norms
$$\|\ube\|^2_\bX:=\|\be\|^2_{0,\Omega} + \|q\|^2_{1,\Omega}, \quad \|\btau\|_{\bY}:=\|\btau\|_{\bdiv,\Omega}, \quad \|(\ube,\btau)\|^2_{\bX\times \bY}:=\|\ube\|^2_\bX + \|\btau\|^2_{\bY}$$
$$\|\bv\|_{\bZ}:=\|\bv\|_{0,\Omega}, \quad \|((\ube,\btau),\bv)\|^2:=\|(\ube,\btau)\|^2_{\bX\times\bY} + \|\bv\|^2_{\bZ}.$$

Introducing the nonlinear and bilinear weak forms $a: \bX\times\bX \to \mathbb{R}$, ${b}_1: \bX \times {\bY} \to \mathbb{R}$ and ${b}_2: (\bX\times\bY) \times {\bZ} \to \mathbb{R}$ defined by  
\begin{equation}\label{eq:def-a-b1-b2}
\begin{array}{ll}
\ds a(\ubd,\ube) & := \ds \int_{\Omega} \C \bd:\be +\int_{\Omega} \kappa(\ubd) \ds\nabla p \cdot \nabla q + c_0 \int_{\Omega} p\,q +  \alpha \int_{\Omega} q\, \tr\bd-\alpha \int_{\Omega} p\, \tr\be  , \\ 
\ds b_1(\ube,\btau)& \ds:= -\int_{\Omega} \btau:\be,\\ 
\ds b_2((\ube,\btau),\bv) &\ds := -\int_{\Omega} \bv \cdot \bdiv\, \btau,
\end{array}
\end{equation}
respectively; and the linear functionals $F\in \bZ'$, $H\in\bY'$, $G\in \bX'$ by 
\begin{equation*}
F(\bv):=\ds \int_{\Omega}  \fb \cdot \bv, \quad 
H(\btau): = -\langle \btau\bn,\bu_\Gamma\rangle_\Gamma, \quad 
G(\ube):=\ds \int_{\Omega}  g\, q  + \langle r_\Gamma,q\rangle_{\Gamma},
\end{equation*}
we can write the weak form \eqref{eq:weak-formulation-1} as follows: Find $((\ubd,\bsigma),\bu)\in (\bX\times\bY)\times\bZ$ such that 
\begin{equation}\label{eq:weak1}
\begin{array}{rlll}
\ds a(\ubd,\ube)  & +\quad b_1(\ube,\bsigma) & \, & = G(\ube), \\ [1ex]
\ds b_1(\ubd,\btau)  & \, & +\quad b_2((\ube,\btau),\bu) & =H(\btau), \\ [1ex]
\,  & \qquad b_2((\ubd,\bsigma),\bv) & \, & = F(\bv),
\end{array}
\end{equation}
for all $((\ube,\btau),\bv)\in (\bX\times\bY)\times\bZ.$

The second weak formulation we treat here results from imposing the symmetry of the poroelastic stress in a weak manner (see, e.g., \cite{gatica14} for the general idea and \cite{lee16} for the application in the context of poroelasticity but leading to a different formulation). In order to do this, it is customary to introduce the rotation tensor 
\begin{equation}
\bgamma = \frac{1}{2} (\nabla \bu - [\nabla \bu]^{\tt t} ) = \nabla \bu -\bd,
\end{equation}
and we can then rewrite the strong form of the coupled PDE system in mixed form as 
\begin{equation}\label{eq:strong-mixed2}
\begin{array}{c}
-\bdiv\, \bsigma = \fb  \qin \Omega,\qquad \bsigma = \bsigma^{\tt t}\qin \Omega,\qquad 
\bgamma = \nabla \bu -\bd \qin \Omega, \\ [2ex]
\bsigma = \C \bd -\alpha p \bbI,  \qin \Omega,\qquad 
c_0 p+\alpha \tr{\bd} - \div (\kappa(\bd,p) \nabla p)   = g, \qin \Omega, \\ [2ex]
\ds \bu = \bu_\Gamma \qon \Gamma,\qquad \kappa(\bd,p) \nabla p\cdot \bn =r_\Gamma \qon \Gamma.
\end{array}
\end{equation}
After testing these equations by $\bv\in\bL^2(\Omega)$, $\bbeta\in\bbLskew$, $\btau\in\bbH (\bdiv;\Omega)$,  
$\be\in\bbL^2(\Omega)$, and $q\in \rH^1(\Omega)$, respectively; we integrate by parts and use \eqref{eq:bc} as natural boundary conditions to 
obtain the system 
\begin{equation*}
\begin{array}{rlll}
\ds-\int_{\Omega} \bv \cdot \bdiv\, \bsigma   &=& \ds \int_{\Omega}  \fb \cdot \bv & \ds \forall\,\bv\in\bL^2(\Omega), \\ [2ex]
\ds-\int_{\Omega} \btau:\bd -\int_{\Omega} \bu \cdot \bdiv\, \btau -\int_{\Omega} \btau:\bgamma  &=& \ds -\langle \btau\bn,\bu_\Gamma\rangle_\Gamma  & \ds \forall\,\btau\in\bbH (\bdiv;\Omega), \\ [2ex]
\ds\int_{\Omega} \C \bd:\be -\alpha \int_{\Omega} p\, \tr\be -\int_{\Omega} \bsigma:\be &=& 0 & \ds \forall\,\be\in\bbL^2(\Omega), \\ [2ex]
\ds\int_{\Omega} \kappa(\bd,p) \nabla p \cdot \nabla q + c_0 \int_{\Omega} p\,q +  \alpha \int_{\Omega} q\, \tr\bd &=& \ds \int_{\Omega}  g\, q + \langle r_\Gamma,q\rangle_{\Gamma}& \ds \forall\,q\in \rH^1(\Omega),\\ [2ex]
\ds-\int_{\Omega} \bsigma:\bbeta  &=& \ds 0 & \ds \forall\,\bbeta\in\bbL^2_{\rskew}(\Omega).
\end{array}
\end{equation*}

Proceeding similarly as in the derivation of \eqref{eq:weak1}, we group spaces, unknowns and test functions as follows:
$$\widetilde{\bX}:=\bbL^2(\Omega)\times \rH^1(\Omega),\quad  \widetilde{\bY}:=\bbH(\bdiv;\Omega), \quad \widetilde{\bZ}:=\bL^2(\Omega)\times \bbL^2_{\rskew}(\Omega),$$
$$\ubd:=(\bd,p)\in\widetilde{\bX},\quad \bsigma\in \widetilde{\bY}, \quad \ubu:=(\bu,\bgamma)\in \widetilde{\bZ},$$
$$\ube:=(\be,q)\in\widetilde{\bX},\quad \btau\in \widetilde{\bY}, \quad \ubv:=(\bv,\bbeta)\in \widetilde{\bZ},$$
where $\widetilde{\bX}$, $\widetilde{\bY}$, $\widetilde{\bX}\times\widetilde{\bY}$ and $\widetilde{\bZ}$ are endowed with the norms
$$\|\ube\|^2_{\widetilde{\bX}}:=\|\be\|^2_{0,\Omega} + \|q\|^2_{1,\Omega}, \quad \|\btau\|_{\widetilde{\bY}}:=\|\btau\|_{\bdiv,\Omega}, \quad \|(\ube,\btau)\|^2_{\widetilde{\bX}\times\widetilde{\bY}}:=\|\ube\|^2_{\widetilde{\bX}} + \|\btau\|^2_{\widetilde{\bY}},$$
$$\|\ubv\|^2_{\widetilde{\bZ}}:=\|\bv\|^2_{0,\Omega} + \|\bbeta\|^2_{0,\Omega}, \quad \|((\ube,\btau),\ubv)\|^2:=\|(\ube,\btau)\|^2_{\tilde{\bX}\times\tilde{\bY}} + \|\ubv\|^2_{\widetilde{\bZ}}.$$

Next, we define the weak forms $\tilde{a}: \widetilde{\bX}\times\widetilde{\bX} \to \mathbb{R}$, $\tilde{b}_1: \widetilde{\bX} \times \widetilde{\bY} \to \mathbb{R}$ and $\tilde{b}_2: (\widetilde{\bX}\times\widetilde{\bY}) \times \widetilde{\bZ} \to \mathbb{R}$ from the expressions 
\begin{equation}\label{eq:def-hat-a-b1-b2}
\begin{array}{ll}
\ds \tilde{a}(\ubd,\ube) & := \ds \int_{\Omega} \C \bd:\be +\int_{\Omega} \kappa(\ubd) \nabla p \cdot \nabla q + c_0 \int_{\Omega} p\,q +  \alpha \int_{\Omega} q\, \tr\bd-\alpha \int_{\Omega} p\, \tr\be  , \\
\tilde{b}_1(\ube,\btau) & \ds:= -\int_{\Omega} \btau:\be ,\\ 
\tilde{b}_2((\ube,\btau),\ubv) &\ds:= -\int_{\Omega} \bv \cdot \bdiv\, \btau -\int_{\Omega} \btau:\bbeta,
\end{array}
\end{equation}
respectively, and the linear functionals $\widetilde{G}\in \widetilde{\bX}'$, $\widetilde{H}\in \widetilde{\bY}'$ and $\widetilde{F}\in \widetilde{\bZ}'$ as 
\begin{equation*}
\widetilde{G}(\ube): = \ds \int_{\Omega}  g\, q  + \langle r_\Gamma,q\rangle_{\Gamma}, \quad 
\widetilde{H}(\btau): = -\langle \btau\bn,\bu_\Gamma\rangle_\Gamma, \quad 
\widetilde{F}(\ubv):=\ds \int_{\Omega}  \fb \cdot \bv,
\end{equation*}
respectively, so that the weak formulation of the nonlinear coupled system \eqref{eq:strong-mixed2} reads: 
Find $((\ubd,\bsigma),\ubu)\in (\widetilde{\bX}\times\widetilde{\bY})\times\widetilde{\bZ}$ such that
\begin{equation}\label{eq:weak2}
\begin{array}{rlll}
\ds \tilde{a}(\ubd,\ube)  & +\quad \tilde{b}_1(\ube,\bsigma) & \, & = \widetilde{G}(\ube), \\ [1ex]
\ds \tilde{b}_1(\ubd,\btau)  & \, & +\quad \tilde{b}_2(\btau,\ubu) & =\widetilde{H}(\btau), \\ [1ex]
\,  & \qquad \tilde{b}_2(\bsigma,\ubv) & \, & = \widetilde{F}(\ubv),
\end{array}
\end{equation}
for all $((\ube,\btau),\ubv)\in (\widetilde{\bX}\times\widetilde{\bY})\times\widetilde{\bZ}.$

\begin{remark}
Note that when $c_0$ approaches zero, our control over the $L^2$-part of the fluid pressure norm diminishes. Consequently, the uniqueness of fluid pressure cannot be guaranteed unless we search for it within a space such as $\rH^1(\Omega)\cap L^2_0(\Omega)$, due to the pure flux boundary conditions imposed on the mass balance equation. Without this consideration, the lack of uniqueness would also extend to stress, as implied by \eqref{eq:constitutive}. In such cases, it becomes necessary to restrict tensors in the spaces $\bY$ and $\widetilde{\bY}$ to those with a zero mean value. A similar scenario arises when $\alpha$ tends to zero: the poroelastic stress loses its unique definition (even though the fluid pressure retains it), requiring the adoption of the zero mean condition within the stress space.
\end{remark}
\subsection{Stability properties and suitable inf-sup conditions}

We start by establishing the boundedness of  the 
bilinear forms  $b_1$, $b_2$, $\tilde{b}_1$ and $\tilde{b}_2$:
\begin{subequations}
\begin{gather}
\label{eq:bounded-b1-b2}
\big|b_1(\ube,\btau)\big|\leq\|\ube\|_{\bX}  \|\btau\|_{\bY},\qquad
\big|b_2((\ube,\btau),\bv)\big|\leq \|(\ube,\btau)\|_{\bX\times\bY}  \|\bv\|_{\bZ},  \\ \label{eq:bounded-hat-b1-b2}
\big|\tilde{b}_1(\ube,\btau)\big|\leq \|\ube\|_{\widetilde{\bX}}  \|\btau\|_{\widetilde{\bY}} ,\qquad \big|\tilde{b}_2((\ube,\btau),\ubv)\big|\leq \|(\ube,\btau)\|_{\widetilde{\bX}\times\widetilde{\bY}} \|\ubv\|_{\widetilde{\bZ}}. 
\end{gather}
\end{subequations}

On the other hand, using H\"older and trace inequalities we can readily observe that the right-hand side functionals are all bounded
\begin{gather}\label{des:bound-F-H-G}
 \big|G(\ube)\big|  \lesssim (\|g\|_{0,\Omega}+ \|r_\Gamma \|_{-1/2,\Gamma})\|\ube\|_{\bX} \quad \forall \ube \in \bX,
 \qquad 
 \big|\widetilde{G}(\ube)\big|  \lesssim (\|g\|_{0,\Omega}+ \|r_\Gamma \|_{-1/2,\Gamma})\|\ube\|_{\widetilde{\bX}} \quad \forall \ube \in \widetilde{\bX},\nonumber \\
 \big|H(\btau)\big|  \lesssim \|\bu_\Gamma\|_{1/2,\Gamma} \|\btau\|_{\bY} \quad \forall \btau \in \bY,\qquad 
 \big|\widetilde{H}(\btau)\big|  \lesssim \|\bu_\Gamma\|_{1/2,\Gamma} \|\btau\|_{\widetilde{\bY}} \qquad \forall \btau \in \widetilde{\bY},\\
 \big|F(\bv)\big| \leq \|\fb\|_{0,\Omega}\, \|\bv\|_{\bZ} \quad \forall \bv \in  \bZ, \qquad 
\big|\widetilde{F}(\ubv)\big| \leq \|\fb\|_{0,\Omega}\, \|\ubv\|_{\widetilde{\bZ}} \quad \forall \ubv \in  \widetilde{\bZ}.\nonumber
\end{gather}

Finally, it is straightforward to see that the kernels of the bilinear forms $b_2$ and $\tilde{b}_2$ are closed subspaces of $\bX\times\bY$ and $\widetilde{\bX}\times\widetilde{\bY}$, respectively. They are denoted  as
\begin{equation}\label{eq:kernel-b2-hat-b2}
\begin{array}{cc}
\bX\times\bY_0 \qan 
\widetilde{\bX}\times\bY_0, 
\end{array}
\end{equation}
and the second component admits the characterisation 
\begin{equation}\label{eq:def-Y-0}
\bY_0:=\{\btau \in \bY:\quad \bdiv\, \btau = \bzero\}=\{\btau \in \widetilde{\bY}:\quad \bdiv\, \btau = \bzero \qan \btau=\btau^{\tt t} \}.
\end{equation}

On the other hand, we note that $b_2$ satisfies the inf-sup condition
\begin{equation}\label{eq:infsup-b2}
\sup_{\bzero\neq(\ube,\btau)\in \bX\times\bY } \frac{b_2((\ube,\btau),\bv)}{\|(\ube,\btau)\|_{\bX\times\bY}} \geq \beta_{b_2}\,\|\bv\|_{\bZ}
\quad \forall\,\bv\in \bZ.
\end{equation}
This is a well-known result, proven by means of the wellposed auxiliary problem of finding, for a given $\bv\in\bZ$, the unique $\by \in \bH^1_0(\Omega)$ such that 
\[-\bdiv[\boldsymbol{\epsilon}(\by)] = \bv \quad \text{in }\Omega; \qquad 
\by = \bzero \quad \text{on $\Gamma$},\]
and then constructing $\hat{\btau} = \boldsymbol{\epsilon}(\by)$ which clearly belongs to $\bY$ and, moreover, it satisfies $\|\hat{\btau}\|_{\bY} \leq C\|\bv\|_{\bZ}$. 
Similarly, 
from  \cite[Section 3.4.3.1]{gatica14} we have that  there exists $\beta_{\tilde{b}_2}>0$ such that 
\begin{equation}\label{eq:infsup-hat-b2}
\sup_{\bzero\neq(\ube,\btau)\in \widetilde{\bX}\times\widetilde{\bY} } \frac{\tilde{b}_2((\ube,\btau),\ubv)}{\|(\ube,\btau)\|_{\widetilde{\bX}\times\widetilde{\bY}}} \geq \beta_{\tilde{b}_2}\,\|\ubv\|_{\widetilde{\bZ}}
\quad \forall\,\ubv\in \widetilde{\bZ}.
\end{equation}
In addition, we note that for all $\btau\in \bY_0$, it suffices to take  $\be=\btau$ to easily arrive at 
\begin{equation}\label{eq:infsup-b1-hat-b1}
\ds \sup_{\bzero\neq\ube\in \bX}  \frac{b_1(\ube,\btau)}{\|\ube\|_{\bX}} \geq \|\btau\|_{\bY}
\quad \forall\,\btau\in \bY_0 \qan  \sup_{\bzero\neq\ube\in \widetilde{\bX}}  \frac{\tilde{b}_1(\ube,\btau)}{\|\ube\|_{\widetilde{\bX}}} \geq \|\btau\|_{\widetilde{\bY}}
\quad \forall\,\btau\in \bY_0. 
\end{equation}

\section{Existence and uniqueness of weak solution}\label{sec:wellp} 
\subsection{Preliminaries}\label{sec:monotone}
We stress that if the permeability $\kappa$ is a positive constant $\kappa = \kappa_0$ or a space-dependent uniformly bounded scalar field $\kappa(\bx)$ in $L^\infty(\Omega)$, or a positive definite matrix $\kappa = \mathbb{K}(\bx)$, then the variational forms $a(\cdot,\cdot)$ and $\tilde{a}(\cdot,\cdot)$ are  bilinear forms bounded and coercive in $\bX$ and $\widetilde{\bX}$, respectively. In this case the systems \eqref{eq:weak1} and \eqref{eq:weak2} are linear twofold saddle-point problems which are uniquely solvable, thanks to the properties of the bilinear forms $\tilde{b}_i(\cdot,\cdot),b_i(\cdot,\cdot)$ and owing to, e.g., \cite[Theorem 3.1]{howell2011inf}.

On the other hand, if the permeability 
in the variational forms $a(\cdot,\cdot)$, $\tilde{a}(\cdot,\cdot)$ induces  monotone and Lipschitz-continuous nonlinear operators, i.e., 
\begin{gather*}
    \bA:\bX\to \bX', \quad \ubd\mapsto \bA(\ubd), \quad \langle \bA(\ubd),\ube\rangle := a(\ubd,\ube),\\
\tilde{\bA}:\widetilde{\bX}\to \widetilde{\bX}', \quad \ubd\mapsto \tilde{\bA}(\ubd), \quad \langle \tilde{\bA}(\ubd),\ube\rangle := \tilde{a}(\ubd,\ube),
\end{gather*}
with  
    \begin{align*}
        |\langle \bA(\ubd_1)-\bA(\ubd_2),\ubd_1-\ubd_2\rangle | & \gtrsim \| \ubd_1-\ubd_2\|^2_{\bX}, \qquad  \| \bA(\ubd_1)-\bA(\ubd_2)\|_{\bX'}  \lesssim \|  \ubd_1-\ubd_2\|_{\bX}, \\
         |\langle \tilde{\bA}(\ubd_1)-\tilde{\bA}(\ubd_2),\ubd_1-\ubd_2\rangle | & \gtrsim \| \ubd_1-\ubd_2\|^2_{\widetilde{\bX}}, \qquad \| \tilde{\bA}(\ubd_1)-\tilde{\bA}(\ubd_2)\|_{\widetilde{\bX}'}  \lesssim \|  \ubd_1-\ubd_2\|_{\widetilde{\bX}}, 
    \end{align*} 
    then the systems \eqref{eq:weak1} and \eqref{eq:weak2} are nonlinear twofold saddle-point problems, which are uniquely solvable thanks to the properties of the bilinear forms $\tilde{b}_i(\cdot,\cdot),b_i(\cdot,\cdot)$ and a direct application of \cite[Lemma 2.1]{gatica03}. 

However, and as discussed in \cite{bociu2022weak,bociu2021nonlinear,van2023mathematical}, some of the typical nonlinearities assumed by $\kappa$ \eqref{eq:kappa} do not guarantee monotonicity of the nonlinear operators $\bA,\tilde{\bA}$.


Note, for example, that in \cite{van2023mathematical} the authors ask that $\kappa$ (they only consider it a function of the dilation $\tr\bd$) is such that 
\[ \kappa \in C^1(\Omega), \quad \kappa(0)>0, \quad \kappa'>0, \]
 in \cite{gaspar2016numerical} the permeability $\kappa$ depends only on the fluid pressure $p$ and it is assumed  that 
\[0< k_0 \leq  \kappa(s) \leq k_1 \qquad \forall s \in \mathbb{R}^+,\]
and in \cite{fu2020constraint} a similar uniform boundedness is assumed even if the permeability depends on both pore pressure and the symmetric strain.  
In our case, for sake of the analysis in this section, we allow the permeability $\kappa(\ubd)=\kappa(\bd,p)$ to be anisotropic but still require that it is a uniformly positive definite second-order tensor in $\bbL^\infty(\Omega)$, and  Lipschitz continuous in $p\in \rH^1(\Omega)$. That is, there exist positive constants $\kappa_1,\kappa_2$ such that 
	\begin{equation}
\label{prop-kappa}	\kappa_1|\bv|^2 \leq \bv^{\tt t}\kappa(\cdot,\cdot)\bv, 
\qquad 
\|\kappa(\cdot,p_1) - \kappa(\cdot,p_2)\|_{\bbL^\infty(\Omega)} \leq \kappa_2 \|p_1 -p_2\|_{1,\Omega},
\end{equation}
for all $ \bv,\bw \in\mathbb{R}^d\setminus\{\bzero\}$, and for all $p_1,p_2\in \rH^1(\Omega)$. 


\subsection{Definition of a fixed-point operator}
In view of the  {discussion}
in Section~\ref{sec:monotone}, if $\bA$ or $\tilde{\bA}$ (the operators induced by 
the nonlinear weak forms $a(\cdot,\cdot)$ or $\tilde{a}(\cdot,\cdot)$, respectively) are not monotone, then we proceed to define, for a given $r>0$, the following sets 
\begin{equation}\label{eq:set-W}
 \bW := \Big\{ \ubw:=(\bw,s)\in\bX \,:\quad \|\ubw\|_{\bX} \leq r \Big\} \qan \widetilde{\bW} := \Big\{ \ubw:=(\bw,s)\in\widetilde{\bX} \,:\quad \|\ubw\|_{\widetilde{\bX}} \leq r \Big\},   
\end{equation}
which are  closed balls of $\bX$ and $\widetilde{\bX}$, respectively, with centre at the origin and radius $r$. Next, for a fixed $\ubw:=(\bw,s)$ in $\bW$ or $\widetilde{\bW}$, we define the bilinear forms $a_{\ubw}:\bX \times \bX \to \mathbb{R}$ and $\tilde{a}_{\ubw}:\widetilde{\bX} \times \widetilde{\bX} \to \mathbb{R}$ as follows 
\begin{equation}\label{eq:atilde}
\begin{array}{cc}
\ds a_{\ubw}(\ubd,\ube) :=\int_\Omega \C\bd:\be + \int_\Omega \kappa(\ubw) \nabla p\cdot \nabla q + c_0 \int_{\Omega} p\,q +  \alpha \int_{\Omega} q\, \tr\bd-\alpha \int_{\Omega} p\, \tr\be,\quad \forall\,\ubd,\,\ube \in \bX,\\[3ex]
\ds \tilde{a}_{\ubw}(\ubd,\ube) :=\int_\Omega \C\bd:\be + \int_\Omega \kappa(\ubw) \nabla p\cdot \nabla q + c_0 \int_{\Omega} p\,q +  \alpha \int_{\Omega} q\, \tr\bd-\alpha \int_{\Omega} p\, \tr\be,\quad \forall\,\ubd,\,\ube \in \widetilde{\bX}.
\end{array}
\end{equation}
Thanks to the assumptions on the nonlinear permeability, we can infer that these forms are continuous 
\begin{equation}\label{eq:bounded-a}
\begin{array}{cc}
\big|a_{\ubw}(\ubd,\ube) \big|\leq C_a  \|\ubd\|_{\bX} \|\ube\|_{\bX}\qquad \forall\, \ubd,\,\ube \in \bX,\\
\big|\tilde{a}_{\ubw}(\ubd,\ube) \big|\leq C_a  \|\ubd\|_{\widetilde{\bX}} \|\ube\|_{\widetilde{\bX}}\qquad \forall\, \ubd,\,\ube \in \widetilde{\bX},
\end{array}
\end{equation}
with $C_a := \max\{2\mu+d\lambda,\,c_0,\, d\, \alpha,\,\kappa_2\,r\}>0$, as well as coercive over all of $\bX$ and $\widetilde{\bX}$, respectively
\begin{equation}\label{eq:coer-a}
\begin{array}{cc}
a_{\ubw}(\ube,\ube) \geq c_a \|\ube\|^2_{\bX} \qquad \forall\, \ube \in \bX,\\
\tilde{a}_{\ubw}(\ube,\ube) \geq c_a \|\ube\|^2_{\widetilde{\bX}} \qquad \forall\, \ube \in \widetilde{\bX},
\end{array}
\end{equation}
with $c_a := \min\{\kappa_1,\,2\mu,\,c_0\}>0$. 

Then we define the following fixed-point operators 
\begin{equation}\label{def:operator-J}
\begin{array}{cc}
\cJ: \bW\subseteq \bX\to \bX,\quad \ubw\mapsto \cJ(\ubw) := \ubd,\\[2ex]
\tilde{\cJ}: \widetilde{\bW}\subseteq \widetilde{\bX}\to \widetilde{\bX},\quad \ubw\mapsto \tilde{\cJ}(\ubw) := \ubd,
\end{array}
\end{equation}
where given $\ubw=(\bw,s)\in\bW$, $\cJ(\ubw)=\ubd=(\bd,p)\in\bX$ is the first component of the solution of the linearised version of problem \eqref{eq:weak1}: Find $((\ubd,\bsigma),\bu)\in \bX\times\bY\times\bZ$ such that
\begin{equation}\label{eq:weak1-linearized}
\begin{array}{rlll}
\ds a_{\ubw}(\ubd,\ube)  & +\quad b_1(\ube,\bsigma) & \, & = G(\ube), \\ [1ex]
\ds b_1(\ubd,\btau)  & \, & +\quad b_2(\btau,\bu) & =H(\btau), \\ [1ex]
\,  & \qquad b_2(\bsigma,\bv) & \, & = F(\bv),
\end{array}
\end{equation}
for all $((\ube,\btau),\bv)\in (\bX\times\bY)\times\bZ$. On the other hand, given $\ubw=(\bw,s)\in\widetilde{\bW}$, $\widetilde{\cJ}(\ubw)=\ubd=(\bd,p)\in\widetilde{\bX}$ is the first component of the solution of the linearised version of problem \eqref{eq:weak2}: Find $((\ubd,\bsigma),\ubu)\in \widetilde{\bX}\times\widetilde{\bY}\times\widetilde{\bZ}$ such that
\begin{equation}\label{eq:weak2-linearized}
\begin{array}{rlll}
\ds \tilde{a}_{\ubw}(\ubd,\ube)  & +\quad \tilde{b}_1(\ube,\bsigma) & \, & = \widetilde{G}(\ube), \\ [1ex]
\ds \tilde{b}_1(\ubd,\btau)  & \, & +\quad \tilde{b}_2(\btau,\ubu) & =\widetilde{H}(\btau), \\ [1ex]
\,  & \qquad \tilde{b}_2(\bsigma,\ubv) & \, & = \widetilde{F}(\ubv),
\end{array}
\end{equation}
for all $((\ube,\btau),\ubv)\in (\widetilde{\bX}\times\widetilde{\bY})\times\widetilde{\bZ}$.

It is clear that $((\ubd,\bsigma),\bu)$ is a solution to \eqref{eq:weak1} if and only if $\ubd$ satisfies $\cJ(\ubd) = \ubd$, and consequently, the wellposedness of \eqref{eq:weak1} is equivalent to the unique solvability of the fixed-point problem: Find $\ubd\in \bW$ such that
\begin{equation}\label{eq:fixed-point-problem-1}
\cJ(\ubd) = \ubd.
\end{equation}
Similarly, the tuple $((\ubd,\bsigma),\ubu)$ is a solution to \eqref{eq:weak2} if and only if $\ubd$ satisfies $\widetilde{\cJ}(\ubd) = \ubd$, and consequently, the wellposedness of \eqref{eq:weak2} is equivalent to the unique solvability of the fixed-point problem: Find $\ubd\in \widetilde{\bW}$ such that
\begin{equation}\label{eq:fixed-point-problem-2}
\widetilde{\cJ}(\ubd) = \ubd.
\end{equation}

In this way, in what follows we focus on proving the unique solvability of \eqref{eq:fixed-point-problem-1} and \eqref{eq:fixed-point-problem-2}. 
According to the definition of $\cJ$ and $\widetilde{\cJ}$ (cf. \eqref{def:operator-J}), it is clear that  proving that these operators are well-defined  amounts to prove that problems \eqref{eq:weak1-linearized} and \eqref{eq:weak2-linearized}, respectively, are wellposed.

With that in mind, let us define the bilinear forms $A_{\ubw} : (\bX\times\bY_0)\times (\bX\times\bY_0)\to \bbR$ and $\tilde{A}_{\ubw} : (\widetilde{\bX}\times\bY_0)\times (\widetilde{\bX}\times\bY_0)\to \bbR$ as
\begin{subequations}
\begin{equation}\label{eq:A}
A_{\ubw}( (\ubd,\bsigma),(\ube,\btau) ):= 
a_{\ubw}(\ubd,\ube) + b_1(\ube,\bsigma) + b_1(\ubd,\btau) ,
\end{equation}
and
\begin{equation}\label{eq:A-tilde}
\tilde{A}_{\ubw}( (\ubd,\bsigma),(\ube,\btau) ):= 
\tilde{a}_{\ubw}(\ubd,\ube) + \tilde{b}_1(\ube,\bsigma) + \tilde{b}_1(\ubd,\btau) ,
\end{equation}\end{subequations}
respectively, and we state the unique solvability of the linearised problems \eqref{eq:weak1-linearized} and \eqref{eq:weak2-linearized}, depending on a smallness of data assumption, as follows. 

\begin{lemma}\label{lem:well-def-J}
Given $r>0$, let us assume that 
\begin{equation}\label{eq:assumption-J}
\dfrac{\gamma_1}{r} \big( 
 \|g\|_{0,\Omega}+ \|r_\Gamma \|_{-1/2,\Gamma} + \|\bu_\Gamma\|_{1/2,\Gamma} + \|\fb\|_{0,\Omega}   \big)\leq 1,
\end{equation}
where
\begin{equation}\label{eq:def-gamma}
\gamma_1:=\dfrac{(C_{a} + 1 + \beta_2 +\gamma_2)^2}{\beta^2_2\, \gamma_2} \qan \gamma_2:=\dfrac{(C_{a}+2)(c_{a}+1+C_{a})}{c_{a}}.
\end{equation}
Then, for a given $\ubw\in\bW$ (cf. \eqref{eq:set-W}),  there exists a unique $\ubd\in\bW$ such that $\cJ(\ubw) = \ubd$.
\end{lemma}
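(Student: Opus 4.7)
The plan is to establish the lemma in two stages: first, for any fixed $\ubw\in\bW$, show that the linearised twofold saddle-point problem \eqref{eq:weak1-linearized} admits a unique solution $((\ubd,\bsigma),\bu)\in (\bX\times\bY)\times\bZ$; second, use the explicit form of the resulting a priori bound together with the smallness condition \eqref{eq:assumption-J} to deduce $\|\ubd\|_{\bX}\le r$, so that $\ubd\in\bW$ and $\cJ$ is well-defined.

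For the first stage I would invoke the abstract twofold saddle-point result \cite[Theorem 3.1]{howell2011inf} (or, equivalently, the formulation in \cite[Lemma 2.1]{gatica03}). All of its hypotheses have already been assembled: the bilinear form $a_{\ubw}$ is bounded by $C_a$ and $\bX$-coercive with constant $c_a$ by \eqref{eq:bounded-a}--\eqref{eq:coer-a}; the forms $b_1$, $b_2$ are bounded by \eqref{eq:bounded-b1-b2}; $b_2$ satisfies the inf-sup condition \eqref{eq:infsup-b2} with constant $\beta_{b_2}$; and $b_1$ satisfies the inf-sup condition \eqref{eq:infsup-b1-hat-b1} on $\bY_0$ with constant $1$ (just take $\be=\btau$). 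Since the coercivity of $a_{\ubw}$ holds on the entire space $\bX$, it holds a fortiori on the kernel of $b_1$ restricted to $\bX\times\bY_0$, which is the hypothesis actually required by the abstract theorem. The functionals $G$, $H$, $F$ are bounded on their respective spaces by \eqref{des:bound-F-H-G}. Existence and uniqueness then follow immediately.

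The main technical point is to carry the constants through the abstract estimate so as to recover exactly the quantity $\gamma_1$ defined in \eqref{eq:def-gamma}. The natural path is a nested two-step argument: treat $\bu$ as a Lagrange multiplier for the constraint $\bdiv\,\bsigma=-\fb$, use the inf-sup \eqref{eq:infsup-b2} (which contributes the factor $1/\beta_{b_2}$) to bound $\bu$ by the residual in the first two equations, and then solve the inner saddle-point problem for $(\ubd,\bsigma)\in \bX\times\bY_0$. For this inner problem the coercivity of $a_{\ubw}$ on $\bX$ together with the inf-sup for $b_1$ on $\bY_0$ (with constant $1$) and the boundedness constants $C_a$, $c_a$ produce precisely the factor $\gamma_2=(C_a+2)(c_a+1+C_a)/c_a$; composing with the outer step yields the overall stability constant $(C_a+1+\beta_{b_2}+\gamma_2)^2/(\beta_{b_2}^2\gamma_2)=\gamma_1$, so that
\[
\|\ubd\|_{\bX}\;\le\;\gamma_1\bigl(\|G\|_{\bX'}+\|H\|_{\bY'}+\|F\|_{\bZ'}\bigr).
\]

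Combining with the norm bounds of the functionals from \eqref{des:bound-F-H-G} gives
\[
\|\ubd\|_{\bX}\;\le\;\gamma_1\bigl(\|g\|_{0,\Omega}+\|r_\Gamma\|_{-1/2,\Gamma}+\|\bu_\Gamma\|_{1/2,\Gamma}+\|\fb\|_{0,\Omega}\bigr),
\]
and then the smallness hypothesis \eqref{eq:assumption-J} yields $\|\ubd\|_{\bX}\le r$, i.e., $\ubd\in\bW$, which concludes the argument. I expect the main obstacle to be this bookkeeping of constants through the nested saddle-point estimate: the form of $\gamma_1$ and $\gamma_2$ as stated is not immediately supplied by the abstract theorem but must be extracted by unwinding its proof (or by writing a direct, tailored a priori estimate using coercivity of $a_{\ubw}$, the $b_1$-inf-sup on $\bY_0$, and the $b_2$-inf-sup). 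All other steps are essentially a reassembly of ingredients already in place in Sections~\ref{sec:weak} and~\ref{sec:monotone}.
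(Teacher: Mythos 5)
Your proposal matches the paper's proof in structure and content: both treat \eqref{eq:weak1-linearized} as a nested (twofold) saddle-point problem, first showing that $A_{\ubw}$ is invertible on the kernel $\bX\times\bY_0$ of $b_2$ by combining the coercivity \eqref{eq:coer-a} and boundedness \eqref{eq:bounded-a} of $a_{\ubw}$ with the $b_1$-inf-sup \eqref{eq:infsup-b1-hat-b1}, then applying the outer inf-sup \eqref{eq:infsup-b2} via Babu\v{s}ka--Brezzi to obtain well-posedness with the stability constant $\gamma_1$ (the paper extracts it via \cite[Proposition 2.36]{ernguermond}, you via \cite[Theorem 3.1]{howell2011inf}, but the bookkeeping is identical), and finally using \eqref{des:bound-F-H-G} and the smallness hypothesis \eqref{eq:assumption-J} to conclude $\ubd\in\bW$. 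The only cosmetic difference is the abstract reference you cite; otherwise this is the paper's argument.
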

\begin{proof}
From the properties of $a_{\ubw}$ and $b_1$, \eqref{eq:bounded-a}, \eqref{eq:coer-a}, \eqref{eq:bounded-b1-b2} and \eqref{eq:infsup-b1-hat-b1}, we have that the bilinear form $A_{\ubw}$ induces an invertible operator on the kernel of the bilinear form $b_2$, $\bX\times\bY_0$ (cf. \eqref{eq:kernel-b2-hat-b2}). Then, from the inf-sup condition of $b_2$ \eqref{eq:infsup-b2}, and a straightforward application of the Babu\v ska--Brezzi theory we have that there exists a unique $((\ubd,\bsigma),\bu)\in 
 (\bX\times\bY)\times\bZ$ solution to \eqref{eq:weak1-linearized}, or equivalently, the existence of a unique $\ubd\in\bX$ such that $\cJ(\ubw) = \ubd$. Finally, from \cite[Proposition 2.36]{ernguermond},  together with \eqref{des:bound-F-H-G}, we readily obtain that
\begin{equation}\label{eq:J-from-W-to-W}
\begin{array}{ll}
\|(\ubd,\bsigma),\bu\|_{(\bX\times\bY)\times\bZ} \\[2ex]
\qquad \ds \leq \gamma_1\, \sup_{\bzero\neq((\ube,\btau),\bv)\in 
 (\bX\times\bY)\times\bZ } \frac{A_{\ubw}( (\ubd,\bsigma),(\ube,\btau) ) + b_2((\ubd,\bsigma),\bv) + b_2((\ube,\btau),\bu)}{\|(\ube,\btau),\bv\|}\\[4ex]
\qquad \ds \leq \gamma_1\, \big( 
 \|g\|_{0,\Omega}+ \|r_\Gamma \|_{-1/2,\Gamma} + \|\bu_\Gamma\|_{1/2,\Gamma} + \|\fb\|_{0,\Omega}   \big),
\end{array}
\end{equation}
and after invoking assumption \eqref{eq:assumption-J}, the bounds above imply that $\ubd$ belongs to $\bW$, therefore completing the proof.
\end{proof}

\begin{lemma}\label{lem:well-def-hat-J}
Given $r>0$, assume that 
\begin{equation}\label{eq:assumption-hat-J}
\dfrac{\tilde\gamma_1}{r} \big( 
 \|g\|_{0,\Omega}+ \|r_\Gamma \|_{-1/2,\Gamma} + \|\bu_\Gamma\|_{1/2,\Gamma} + \|\fb\|_{0,\Omega}   \big)\leq 1,
\end{equation}
where
\begin{equation}\label{eq:def-hat-gamma}
\tilde\gamma_1:=\dfrac{(C_{a} + 1 + \tilde{\beta}_2 +\gamma_2)^2}{\tilde{\beta}^2_2\, \gamma_2},
\end{equation}
and with $\gamma_2$ defined in \eqref{eq:def-gamma}. Then, given $\ubw\in\widetilde{\bW}$ (cf. \eqref{eq:set-W}) there exists a unique $\ubd\in\widetilde{\bW}$ such that $\tilde{\cJ}(\ubw) = \ubd$.
\end{lemma}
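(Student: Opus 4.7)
The plan is to mirror the proof of Lemma~\ref{lem:well-def-J} step by step, substituting each stability ingredient of the strong symmetry formulation by its weak symmetry counterpart, and noting that the only structural difference is the use of the rotation tensor as an extra Lagrange multiplier. Specifically, for a fixed $\ubw \in \widetilde{\bW}$, I would first verify that the bilinear form $\tilde{A}_{\ubw}$ from \eqref{eq:A-tilde} induces an invertible operator on the kernel $\widetilde{\bX} \times \bY_0$ of $\tilde{b}_2$. This follows from the continuity \eqref{eq:bounded-a} and coercivity \eqref{eq:coer-a} of $\tilde{a}_{\ubw}$ (which hold on the whole space $\widetilde{\bX}$, hence a fortiori on the kernel), the boundedness \eqref{eq:bounded-hat-b1-b2} of $\tilde{b}_1$, and the second inf-sup bound in \eqref{eq:infsup-b1-hat-b1}, which provides unit inf-sup constant for $\tilde{b}_1$ over $\bY_0$ (via the choice $\be = \btau$).

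Combining this kernel invertibility with the global inf-sup condition \eqref{eq:infsup-hat-b2} of $\tilde{b}_2$, with constant $\tilde{\beta}_2$, the twofold saddle-point variant of the Babu\v{s}ka--Brezzi theory (in the form of \cite[Theorem 3.1]{howell2011inf}, or equivalently \cite[Lemma 2.1]{gatica03}) yields a unique solution $((\ubd,\bsigma),\ubu) \in (\widetilde{\bX} \times \widetilde{\bY}) \times \widetilde{\bZ}$ to the linearised problem \eqref{eq:weak2-linearized}. In particular, this determines $\tilde{\cJ}(\ubw) = \ubd \in \widetilde{\bX}$ uniquely.

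To close the argument, I would invoke \cite[Proposition 2.36]{ernguermond} together with the right-hand side bounds in \eqref{des:bound-F-H-G} to deduce the a priori estimate
\begin{equation*}
\|((\ubd,\bsigma),\ubu)\| \,\leq\, \tilde{\gamma}_1 \big(\|g\|_{0,\Omega} + \|r_\Gamma\|_{-1/2,\Gamma} + \|\bu_\Gamma\|_{1/2,\Gamma} + \|\fb\|_{0,\Omega}\big),
\end{equation*}
with $\tilde{\gamma}_1$ as in \eqref{eq:def-hat-gamma}; the smallness assumption \eqref{eq:assumption-hat-J} then forces $\|\ubd\|_{\widetilde{\bX}} \leq r$, so that $\ubd \in \widetilde{\bW}$ as required.

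I expect no step to be genuinely harder than in Lemma~\ref{lem:well-def-J}. The only point that demands care is tracking how $\tilde{\beta}_2$ (from \eqref{eq:infsup-hat-b2}) replaces $\beta_2$ in the final a priori constant $\tilde{\gamma}_1$, which is exactly why the smallness threshold \eqref{eq:assumption-hat-J} differs from \eqref{eq:assumption-J}. The fact that the inf-sup lower bound for $\tilde{b}_1$ over $\bY_0$ coincides with that of $b_1$ over $\bY_0$ explains why the auxiliary constant $\gamma_2$ is shared between \eqref{eq:def-gamma} and \eqref{eq:def-hat-gamma}, so no new algebraic manipulation is needed beyond renaming.
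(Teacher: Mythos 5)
Your proposal is correct and follows essentially the same route as the paper: the paper literally states that the proof of this lemma mirrors that of Lemma~\ref{lem:well-def-J}, and you correctly identify the substitutions required (kernel $\widetilde{\bX}\times\bY_0$ of $\tilde b_2$ per \eqref{eq:kernel-b2-hat-b2}--\eqref{eq:def-Y-0}, coercivity/continuity of $\tilde a_{\ubw}$ from \eqref{eq:coer-a} and \eqref{eq:bounded-a}, the inf-sup bounds \eqref{eq:infsup-b1-hat-b1} and \eqref{eq:infsup-hat-b2}, and the a priori estimate via \cite[Proposition 2.36]{ernguermond} with \eqref{des:bound-F-H-G}, closing with the smallness condition \eqref{eq:assumption-hat-J}). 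Your remarks on why $\gamma_2$ is shared and why only $\tilde\beta_2$ replaces $\beta_2$ in $\tilde\gamma_1$ are accurate and consistent with the definitions \eqref{eq:def-gamma} and \eqref{eq:def-hat-gamma}.
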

\begin{proof}
The proof follows using the same steps employed to prove Lemma~\ref{lem:well-def-J}.
\end{proof}
\subsection{Wellposedness of the continuous problem}
Here, we provide the main result of this section, namely, the existence and uniqueness of solution of the nonlinear problems \eqref{eq:weak1} and \eqref{eq:weak2}. This result is established in the following theorems. 
\begin{theorem}\label{theorem:unique-solution-weak1}
Let $\fb \in \bL^2(\Omega)$, $g \in L^2(\Omega)$, $\bu_\Gamma \in \bH^{1/2}(\Gamma)$ and $r_\Gamma \in \rH^{-1/2}(\Gamma)$ such that
\begin{equation}\label{eq:assumption-J-2}
\ds \dfrac{\gamma_1}{r}\max\{\gamma_1\,\kappa_2\,r,1\}\, \big( 
 \|g\|_{0,\Omega}+ \|r_\Gamma \|_{-1/2,\Gamma} + \|\bu_\Gamma\|_{1/2,\Gamma} + \|\fb\|_{0,\Omega}   \big) < 1,
\end{equation}
where $\gamma_1$ is defined in \eqref{eq:def-gamma}.
Then, the operator $\cJ$ (cf. \eqref{def:operator-J}) has a unique fixed point $\ubd\in\bW$. Equivalently, the problem \eqref{eq:weak1} has a unique solution $((\ubd,\bsigma),\bu)\in (\bX\times\bY)\times\bZ$ with $\ubd\in\bW$. In addition, we have the following continuous dependence on data
%
\begin{equation}\label{eq:stability-weak1}
\|(\ubd,\bsigma),\bu\| \lesssim  
 \|g\|_{0,\Omega}+ \|r_\Gamma \|_{-1/2,\Gamma} + \|\bu_\Gamma\|_{1/2,\Gamma} + \|\fb\|_{0,\Omega}.
\end{equation}
\end{theorem}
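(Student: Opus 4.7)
The plan is to apply Banach's contraction principle to the operator $\cJ$ introduced in \eqref{def:operator-J}, noting that by the equivalence \eqref{eq:fixed-point-problem-1} any fixed point $\ubd \in \bW$ of $\cJ$ corresponds to a solution of \eqref{eq:weak1}. Two properties are required: that $\cJ$ maps $\bW$ into itself, and that it is a strict contraction on $\bW$.

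The first property follows essentially for free from the preceding lemma. Since $\max\{\gamma_1\kappa_2 r,1\}\geq 1$, hypothesis \eqref{eq:assumption-J-2} of the theorem is strictly stronger than \eqref{eq:assumption-J}, so Lemma~\ref{lem:well-def-J} applies and yields not only $\cJ(\bW)\subseteq\bW$ but also the sharper data-dependent a priori bound \eqref{eq:J-from-W-to-W} for every $\ubd=\cJ(\ubw)$ in the image. Both upper bounds on $\|p\|_{1,\Omega}$ (by $r$ and by $\gamma_1$ times the data sum) will be needed below.

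The main obstacle is the contraction estimate. For arbitrary $\ubw_1,\ubw_2 \in \bW$ I would denote by $((\ubd_i,\bsigma_i),\bu_i)$ the corresponding solutions of \eqref{eq:weak1-linearized} and subtract the two systems. Using the identity
\[
a_{\ubw_1}(\ubd_1,\ube) - a_{\ubw_2}(\ubd_2,\ube) = a_{\ubw_1}(\ubd_1-\ubd_2,\ube) + \int_\Omega \bigl(\kappa(\ubw_1)-\kappa(\ubw_2)\bigr)\nabla p_2 \cdot \nabla q,
\]
the triple $((\ubd_1-\ubd_2,\bsigma_1-\bsigma_2),\bu_1-\bu_2)$ solves a linear twofold saddle-point problem of the same form as \eqref{eq:weak1-linearized} with coefficient $\ubw_1$, driven by a right-hand side concentrated entirely in the first equation and given by the last integral above. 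Re-invoking the Babu\v{s}ka--Brezzi-type global bound from the proof of Lemma~\ref{lem:well-def-J} together with the Lipschitz hypothesis \eqref{prop-kappa} on $\kappa$ should then deliver
\[
\|\cJ(\ubw_1)-\cJ(\ubw_2)\|_{\bX} \;\leq\; \gamma_1\,\kappa_2\,\|p_2\|_{1,\Omega}\,\|\ubw_1-\ubw_2\|_{\bX}.
\]
The delicate bookkeeping is then to bound $\|p_2\|_{1,\Omega}$ either by $r$ (because $\ubd_2\in\bW$) or by $\gamma_1(\|g\|_{0,\Omega}+\|r_\Gamma\|_{-1/2,\Gamma}+\|\bu_\Gamma\|_{1/2,\Gamma}+\|\fb\|_{0,\Omega})$ (from \eqref{eq:J-from-W-to-W}), and to observe that the factor $\max\{\gamma_1\kappa_2 r,1\}$ appearing in \eqref{eq:assumption-J-2} is tailored so that in either regime the smaller of the two resulting Lipschitz constants, $\gamma_1\kappa_2\min\{r,\gamma_1(\|g\|_{0,\Omega}+\|r_\Gamma\|_{-1/2,\Gamma}+\|\bu_\Gamma\|_{1/2,\Gamma}+\|\fb\|_{0,\Omega})\}$, is strictly below $1$.

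With strict contraction in hand, Banach's theorem produces the unique fixed point $\ubd\in\bW$, which is equivalent to the unique solvability of \eqref{eq:weak1}; the remaining unknowns $\bsigma$ and $\bu$ are recovered by solving \eqref{eq:weak1-linearized} with $\ubw=\ubd$, and the continuous dependence estimate \eqref{eq:stability-weak1} follows at once by evaluating \eqref{eq:J-from-W-to-W} at this fixed point.
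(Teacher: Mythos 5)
Your proposal is correct and follows essentially the same route as the paper: invoke Lemma~\ref{lem:well-def-J} for well-definedness and the a priori bound \eqref{eq:J-from-W-to-W}, subtract the two linearised saddle-point systems for $\ubw_1,\ubw_2$ to isolate the $\kappa$-difference term on the right-hand side, apply the global inf-sup estimate together with the Lipschitz hypothesis \eqref{prop-kappa} to obtain the contraction bound, and then check that \eqref{eq:assumption-J-2} forces the Lipschitz constant below $1$. The only cosmetic difference is that you phrase the last step as choosing the smaller of the two admissible bounds on $\|p_2\|_{1,\Omega}$, whereas the paper simply substitutes the data-dependent bound and verifies that \eqref{eq:assumption-J-2} makes $\gamma_1^2\kappa_2\big(\|g\|_{0,\Omega}+\|r_\Gamma\|_{-1/2,\Gamma}+\|\bu_\Gamma\|_{1/2,\Gamma}+\|\fb\|_{0,\Omega}\big)<1$ in both regimes of the $\max$; the two observations are equivalent.
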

\begin{proof}
We begin by recalling from the previous analysis that assumption \eqref{eq:assumption-J-2} ensures the well-definiteness of $\cJ$. Now, let $\ubw_{1}=(\bw_1,s_1)$, $\ubw_{2}=(\bw_2,s_2)$, $\ubd_{1}=(\bd_1,p_1)$, $\ubd_{2}=(\bd_2,p_2)$ $\in\bW$, be such that $\cJ(\ubw_{1})=\ubd_{1}$ and $\cJ(\ubw_{2})=\ubd_{2}$. According to the definition of $\cJ$ (cf. \eqref{eq:weak1-linearized}), it follows that there exist 
$(\bsigma_1,\bu_1)$, $(\bsigma_2,\bu_2)$ $\in\bY\times\bZ$, such that for all $((\ube,\btau),\bv)\in (\bX\times\bY)\times\bZ$, there hold
\begin{equation*}
\begin{array}{cc}
A_{\ubw_1}( (\ubd_1,\bsigma_1),(\ube,\btau) ) + b_2((\ube,\btau),\bu_1) + b_2((\ubd_1,\bsigma_1),\bv) = G(\ube) + H(\btau) + F(\bv),\\[2ex]
A_{\ubw_2}( (\ubd_2,\bsigma_2),(\ube,\btau) ) + b_2((\ube,\btau),\bu_2) + b_2((\ubd_2,\bsigma_2),\bv) = G(\ube) + H(\btau) + F(\bv).
\end{array}
\end{equation*}
Then, subtracting both equations, adding and subtracting suitable terms, we easily arrive at
\begin{equation}\label{eq:A1-A2}
\begin{array}{ll}
A_{\ubw_1}( (\ubd_1-\ubd_2,\bsigma_1-\bsigma_2),(\ube,\btau) ) + b_2((\ube,\btau),\bu_1-\bu_2) + b_2((\ubd_1-\ubd_2,\bsigma_1-\bsigma_2),\bv) \\[2ex]
\ds \qquad = \int_\Omega (\kappa(\ubw_2) - \kappa(\ubw_1) \nabla p_2\cdot \nabla q.
\end{array}
\end{equation}
Therefore, recalling that $\ubw_1\in\bW$, we can use the latter identity, the bound \eqref{eq:J-from-W-to-W}, and invoke the Lipschitz continuity of $\kappa$ (cf. \eqref{prop-kappa}), to obtain
\begin{equation*}
\begin{array}{ll}
\ds\|\ubd_{1} - \ubd_{2}\|_\bX  \,\leq\, 
\|(\ubd_{1} - \ubd_{2}, \bsigma_1 -\bsigma_2 ),\bu_1 - \bu_2\|_{ (\bX\times\bY) \times \bZ} \\[2ex]
\ds \leq \gamma_1 \sup_{\substack{\bzero\neq((\ube,\btau),\bv) \\ \quad \in (\bX\times\bY)\times\bZ} } \frac{A_{\ubw_1}( (\ubd_1-\ubd_2,\bsigma_1-\bsigma_2),(\ube,\btau) )  + b_2((\ube,\btau),\bu_1-\bu_2) + b_2((\ubd_1-\ubd_2,\bsigma_1-\bsigma_2),\bv) }{\|(\ube,\btau),\bv\|}\\[4ex]
\ds = \gamma_1\, \sup_{\substack{\bzero\neq((\ube,\btau),\bv) \\ \quad \in (\bX\times\bY)\times\bZ} } \dfrac{\ds\int_\Omega (\kappa(\ubw_2) - \kappa(\ubw_1) \nabla p_2\cdot \nabla q}{\|(\ube,\btau),\bv\|}\\[4ex]
 \ds \leq \gamma_1\,\kappa_2 \,\|s_2 -s_1\|_{1,\Omega} \|\nabla p_2\|_{0,\Omega}\, ,
\end{array}
\end{equation*}
which, together with the fact that $\ubd_{2}=(\bd_2,p_2)\in \bW$ and the estimate \eqref{eq:J-from-W-to-W}, implies that 
\begin{equation*}
\begin{array}{ll}
\|\cJ(\ubw_{1})- \cJ(\ubw_{2})\|_\bX \,=\, \|\ubd_{1} - \ubd_{2}\|_\bX \\[2ex]
\qquad\ds\leq \gamma_1^2\,\kappa_2\, \big( 
 \|g\|_{0,\Omega}+ \|r_\Gamma \|_{-1/2,\Gamma} + \|\bu_\Gamma\|_{1/2,\Gamma} + \|\fb\|_{0,\Omega}   \big) \|s_1 -s_2\|_{1,\Omega}\\[2ex]
 \qquad\ds\leq \gamma_1^2\,\kappa_2\,  \big( 
 \|g\|_{0,\Omega}+ \|r_\Gamma \|_{-1/2,\Gamma} + \|\bu_\Gamma\|_{1/2,\Gamma} + \|\fb\|_{0,\Omega}   \big) \|\ubw_1 - \ubw_2\|_{\bX}.
\end{array}
\end{equation*}
The latter bound, in combination with the assumption \eqref{eq:assumption-J-2} and the Banach fixed-point theorem, implies that $\cJ$ has a unique fixed point in $\bW$.  Equivalently, this result yields that there exists a unique $((\ubd,\bsigma),\bu)\in (\bX\times\bY)\times\bZ$ solution to \eqref{eq:weak1}. Finally, estimate \eqref{eq:stability-weak1} is obtained analogously to \eqref{eq:J-from-W-to-W}, which completes the proof.
\end{proof}
\begin{theorem}\label{theorem:unique-solution-weak2}
Let $\fb \in \bL^2(\Omega)$, $g \in L^2(\Omega)$, $\bu_\Gamma \in \bH^{1/2}(\Gamma)$ and $r_\Gamma \in \rH^{-1/2}(\Gamma)$ such that
\begin{equation}\label{eq:assumption-hat-J-2}
\ds \dfrac{\tilde\gamma_1}{r}\max\{\tilde\gamma_1\,\kappa_2\,r,1\}\, \big( 
 \|g\|_{0,\Omega}+ \|r_\Gamma \|_{-1/2,\Gamma} + \|\bu_\Gamma\|_{1/2,\Gamma} + \|\fb\|_{0,\Omega}   \big) < 1,
\end{equation}
where $\tilde\gamma_1$ is defined in \eqref{eq:def-hat-gamma}.
Then, the operator $\tilde{\cJ}$ (cf. \eqref{def:operator-J}) has a unique fixed point $\ubd\in\widetilde{\bW}$. Equivalently, the problem \eqref{eq:weak2} has a unique solution $((\ubd,\bsigma),\ubu)\in \widetilde{\bX}\times\widetilde{\bY}\times\widetilde{\bZ}$ with $\ubd\in\widetilde{\bW}$. In addition, there 
holds 
\begin{equation}\label{eq:stability-weak2}
\|(\ubd,\bsigma),\ubu\| \lesssim 
 \|g\|_{0,\Omega}+ \|r_\Gamma \|_{-1/2,\Gamma} + \|\bu_\Gamma\|_{1/2,\Gamma} + \|\fb\|_{0,\Omega}.
\end{equation}
\end{theorem}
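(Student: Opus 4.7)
The plan is to reproduce verbatim the structure of the proof of Theorem~\ref{theorem:unique-solution-weak1}, replacing the weak forms $a_{\ubw},b_1,b_2,A_{\ubw}$ by $\tilde{a}_{\ubw},\tilde{b}_1,\tilde{b}_2,\tilde{A}_{\ubw}$; the spaces $\bX,\bY,\bZ$ by $\widetilde{\bX},\widetilde{\bY},\widetilde{\bZ}$; the ball $\bW$ by $\widetilde{\bW}$; the inf-sup constant $\beta_{b_2}$ associated with \eqref{eq:infsup-b2} by its weak-symmetry counterpart $\beta_{\tilde{b}_2}$ from \eqref{eq:infsup-hat-b2}; and the functionals $G,H,F$ by $\widetilde{G},\widetilde{H},\widetilde{F}$. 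Since the smallness assumption \eqref{eq:assumption-hat-J-2} implies \eqref{eq:assumption-hat-J}, Lemma~\ref{lem:well-def-hat-J} already gives us that $\tilde{\cJ}$ is a well-defined self-map of $\widetilde{\bW}$, and the analogue of \eqref{eq:J-from-W-to-W} holds, namely
\begin{equation*}
\|((\ubd,\bsigma),\ubu)\| \,\leq\, \tilde\gamma_1\,\big(\|g\|_{0,\Omega}+\|r_\Gamma\|_{-1/2,\Gamma}+\|\bu_\Gamma\|_{1/2,\Gamma}+\|\fb\|_{0,\Omega}\big).
\end{equation*}
The only ingredient that is genuinely different from the strong-symmetry case is the use of the PEERS-type inf-sup condition \eqref{eq:infsup-hat-b2} on $\widetilde{\bZ}=\bL^2(\Omega)\times\bbL^2_{\rskew}(\Omega)$ in place of \eqref{eq:infsup-b2}, but this is already recorded in Section~\ref{sec:weak}, so it can be plugged into the Babu\v ska--Brezzi / twofold saddle-point machinery with no modification.

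Next I would establish the contraction property of $\tilde{\cJ}$. Given $\ubw_1=(\bw_1,s_1),\ubw_2=(\bw_2,s_2)\in\widetilde{\bW}$, let $\ubd_i=(\bd_i,p_i)=\tilde{\cJ}(\ubw_i)$ together with the associated $(\bsigma_i,\ubu_i)\in\widetilde{\bY}\times\widetilde{\bZ}$ solve \eqref{eq:weak2-linearized}. Subtracting the two instances and adding and subtracting the term $\int_\Omega \kappa(\ubw_1)\nabla p_2\cdot\nabla q$ in the diagonal part, exactly as in the derivation of \eqref{eq:A1-A2}, I get
\begin{equation*}
\tilde{A}_{\ubw_1}\big((\ubd_1-\ubd_2,\bsigma_1-\bsigma_2),(\ube,\btau)\big) + \tilde{b}_2(\btau,\ubu_1-\ubu_2) + \tilde{b}_2(\bsigma_1-\bsigma_2,\ubv) = \int_\Omega \big(\kappa(\ubw_2)-\kappa(\ubw_1)\big)\nabla p_2\cdot\nabla q.
\end{equation*}
Applying the global inf-sup inequality for the linearised twofold saddle-point operator (whose constant is $\tilde\gamma_1$, obtained from \eqref{eq:bounded-a}--\eqref{eq:coer-a}, \eqref{eq:bounded-hat-b1-b2}, \eqref{eq:infsup-hat-b2} and \eqref{eq:infsup-b1-hat-b1}) to the left-hand side, and bounding the right-hand side by the Lipschitz estimate \eqref{prop-kappa} together with $\|\nabla p_2\|_{0,\Omega}\leq\|\ubd_2\|_{\widetilde{\bX}}$ and the image bound above for $\ubd_2$, I arrive at
\begin{equation*}
\|\tilde{\cJ}(\ubw_1)-\tilde{\cJ}(\ubw_2)\|_{\widetilde{\bX}} \leq \tilde\gamma_1^{\,2}\,\kappa_2\,\big(\|g\|_{0,\Omega}+\|r_\Gamma\|_{-1/2,\Gamma}+\|\bu_\Gamma\|_{1/2,\Gamma}+\|\fb\|_{0,\Omega}\big)\,\|\ubw_1-\ubw_2\|_{\widetilde{\bX}}.
\end{equation*}

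By the smallness assumption \eqref{eq:assumption-hat-J-2}, the Lipschitz constant in the previous estimate is strictly less than $1$, so $\tilde{\cJ}$ is a contraction on the closed ball $\widetilde{\bW}\subset\widetilde{\bX}$. Banach's fixed-point theorem yields a unique $\ubd\in\widetilde{\bW}$ with $\tilde{\cJ}(\ubd)=\ubd$, which by construction corresponds to the unique solution $((\ubd,\bsigma),\ubu)\in(\widetilde{\bX}\times\widetilde{\bY})\times\widetilde{\bZ}$ of \eqref{eq:weak2}, and the bound \eqref{eq:stability-weak2} is just the image estimate of the first step applied to this fixed point. The main conceptual obstacle I expect is purely bookkeeping: keeping track of the tilde spaces and verifying that the constant $\tilde\gamma_1$ produced by the twofold saddle-point theory of \cite{howell2011inf,gatica03} indeed plays the role that $\gamma_1$ plays in \eqref{eq:J-from-W-to-W}; no new functional-analytic idea is required beyond what is already used for Theorem~\ref{theorem:unique-solution-weak1}.
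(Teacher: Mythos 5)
Your proposal is correct and follows precisely the route the paper itself intends: the paper's proof of Theorem~\ref{theorem:unique-solution-weak2} is a one-line remark that it proceeds ``using the same steps employed to prove Theorem~\ref{theorem:unique-solution-weak1}'', and you have simply spelled out those steps with $\tilde a_{\ubw},\tilde b_1,\tilde b_2,\tilde A_{\ubw}$, the tilde spaces, the PEERS-type inf-sup condition \eqref{eq:infsup-hat-b2}, and the constant $\tilde\gamma_1$ in place of their strong-symmetry counterparts. The well-definedness via Lemma~\ref{lem:well-def-hat-J}, the subtraction identity analogous to \eqref{eq:A1-A2}, the Lipschitz estimate from \eqref{prop-kappa}, and the Banach fixed-point argument all match the paper's template exactly.
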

\begin{proof}
The proof follows using the same steps employed to prove Theorem~\ref{theorem:unique-solution-weak1}.
\end{proof}

\section{Finite element discretisation}\label{sec:FE}
Let us consider a  regular partition $\mathcal T_h$ of $\bar\Omega$ made up of triangles $K$ 
(in $\mathbb{R}^2$) or tetrahedra $K$ (in $\mathbb{R}^3$) of diameter $h_K$, and denote the mesh size by 
$h := \max\{ h_K: \ K \in \mathcal T_h\}$. We will start by defining finite-dimensional subspaces $\bX_h$, $\bY_h$, $\bZ_h$, $\widetilde{\bX}_h$, $\widetilde{\bY}_h$, $\widetilde{\bZ}_h$ of the functional spaces encountered before. 

Given an integer $\ell \ge 0$ and $K \in \mathcal{T}_h$, we first let $\mathrm{P}_\ell(K)$ be the space of polynomials of degree $\leq \ell$ defined on $K$, whose vector and tensor versions are denoted $\bP_\ell(K) \,:=\, [\mathrm{P}_\ell(K)]^d$ and $\mathbb{P}_\ell(K)
\,=\,[\mathrm{P}_\ell(K)]^{d \times d}$, respectively. Also, we let $\mathbf{RT}_\ell(K) \,:=\, \bP_\ell(K) \oplus \mathrm{P}_\ell(K)\,\bx$ be the local Raviart--Thomas space of order $\ell$ defined on $K$, where $\bx$ stands for a generic vector in $\bbR^d$. 

\subsection{Finite element spaces and definition of the Galerkin scheme}\label{sec:spaces}
Irrespective of the discrete spaces used for the other unknowns, for fluid pressure we take Lagrangian elements as follows 
\[\bX_{2,h}:= \left\{ q_h \in \mathrm{C}(\overline{\Omega}) :\quad  q_h|_{K}\in \textrm{P}_{k+1}(K)\;\; \quad \forall\, K\in \mathcal{T}_h\right\}.\]

For each $K\in \mathcal{T}_h$ we consider the bubble space of order $k$, defined as 
\[\mathbf{B}_k(K):=
 \begin{cases}
\mathbf{curl}^{\mathrm{t}}(b_K\mathrm{P}_k(K))&\textrm{in}\quad\mathbb{R}^2,\\
\nabla \times (b_K\mathbf{P}_k(K))&\textrm{in} \quad \mathbb{R}^3,
\end{cases}\]
{where $b_K$ is a suitably normalised cubic polynomial on $K$, which vanishes on the boundary of $K$ (see \cite{ernguermond}).}

Arnold--Winther finite elements are defined in \cite{arnold-winther-0}   for $k\geq 1$  and   for the 2D case. {The lowest-order conforming space for poroelastic stress (and here also for strain) consists of piecewise $\mathbb{P}_2$ tensors enriched with cubic shape functions, and piecewise $\mathbb{P}_1$ vectors for displacement}:  
\begin{align}
{\bY}_{h}&:=\left\{{\btau}_h \in \bbHsym:\  {\btau}_{h}|_{K}\in \mathbb{P}_{k+2}(K)\ \text{and} \ \bdiv\,\btau_h|_K \in \mathbf{P}_{k}(K)\quad \forall\, K\in \mathcal{T}_h\right\}, \nonumber\\\label{fe:aw}
{\bZ}_{h}&:= \left\{\bv_h \in \bL^2(\Omega): \  \bv_h|_{K}\in \textbf{P}_{k}(K)  \quad \forall\, K\in \mathcal{T}_h\right\}.
\end{align}
Note that a non-conforming version is also given in \cite{arnold-winther-0} but it gives an unbalanced approximation error for displacement and stress and we therefore keep only the conforming version. 
An appropriate interpolation operator   (bounded, with suitable approximability and  commutation properties) is constructed in \cite{arnold-winther-0}, which thanks to Fortin's Lemma (cf. \cite[Lemma 2.6]{gatica14}), imply a discrete inf-sup condition for $b_2(\cdot,\cdot)$ (see also \cite{howell2011inf}).

As announced, the following space for discrete strains is considered 
\begin{equation}\label{eq:X-1-h}
  \bX_{1,h}:=\left\{{\be}_h \in \bbLsym:\;\; {\btau}_{h}|_{K}\in \mathbb{P}_{k+2}(K)\ \text{and} \ \bdiv\,\btau_h|_K \in \mathbf{P}_{k}(K)\quad \forall\, K\in \mathcal{T}_h\right\}.  
\end{equation}
Then, defining the product space $\bX_h:=\bX_{1,h}\times\bX_{2,h}$, we note that the finite element subspaces  $(\bX_h\times\bY_h)\times\bZ_h$ are inf-sup stable for the bilinear form $b_2$ (cf. \cite{arnold-winther-0}) 
\begin{equation}\label{eq:infsup-b2-h}
\sup_{\bzero\neq(\ube_h,\btau_h)\in \bX_h\times\bY_h } \frac{b_2((\ube_h,\btau_h),\bv_h)}{\|(\ube_h,\btau_h)\|_{\bX\times\bY}} \geq \beta^*_{b_2}\,\|\bv_h\|_{\bZ}
\quad \forall\,\bv_h\in \bZ_h.
\end{equation}
In addition, it is straightforward to see that the kernel of the bilinear form $b_2$ can be characterised by
\begin{equation}\label{eq:kernel-b2-h}
\bX_h\times\bY_{h,0},\quad \text{with}\quad \bY_{h,0} = \{\btau_h \in \bY_h:\ \bdiv\, \btau_h = \bzero \},
\end{equation}
and, similarly to \eqref{eq:infsup-hat-b1-h}, using that $\bY_{h,0}\subset\bX_{1,h}$, for all $\btau_h\in\tilde{\bY}_{h,0}$ we can take $\be_h=\btau_h$ to conclude that $b_1$ satisfies the inf-sup condition 
\begin{equation}\label{eq:infsup-b1-h}
\ds \sup_{\bzero\neq\ube_h\in \bX_h}  \frac{b_1(\ube_h,\btau_h)}{\|\ube_h\|_{\bX}} \geq \|\btau_h\|_{\bY}
\quad \forall\,\btau_h\in \bY_{0,h}. 
\end{equation}

The Galerkin scheme associated with the weak formulation \eqref{eq:weak1} consists in finding $((\ubd_h,\bsigma_h),\bu_h)\in (\bX_h\times\bY_h)\times\bZ_h$ such that 
\begin{equation}\label{eq:weak1-h}
\begin{array}{rlll}
\ds a(\ubd_h,\ube_h)  & +\quad b_1(\ube_h,\bsigma_h) & \, & = G(\ube_h), \\ [1ex]
\ds b_1(\ubd_h,\btau_h)  & \, & +\quad b_2((\ube_h,\btau_h),\bu_h) & =H(\btau_h), \\ [1ex]
\,  & \qquad b_2((\ubd_h,\bsigma_h),\bv_h) & \, & = F(\bv_h),
\end{array}
\end{equation}
for all $((\ube_h,\btau_h),\bv_h)\in (\bX_h\times\bY_h)\times\bZ_h$, with $\ubd_h=(\bd_h,p_h)$ and $\ube_h=(\be_h,q_h)$. 

Next, 
we recall that the classical PEERS  elements are described in \cite{arnold84}: 
\begin{align}\label{fe:peers}
\widetilde{\bY}_{h}&:=\left\{{\btau}_h \in \mathbb{H}(\mathbf{div},\Omega):\quad {\btau}_{h}|_{K}\in \mathbb{RT}_k(K)\oplus[\mathbf{B}_k(K)]^d\quad \forall\, K\in \mathcal{T}_h\right\},\nonumber \\
\widetilde{\bZ}_{1,h}&:= \left\{\bv_h \in \bL^2(\Omega): \quad \bv_h|_{K}\in \textbf{P}_{k}(K)\quad \forall\, K\in \mathcal{T}_h\right\},\\
\widetilde{\bZ}_{2,h}&:=\left\{\bbeta_h\in \bbLskew\cap \mathbf{C}(\overline{\Omega})\quad \mathrm{and}\quad \bbeta_h|_{K}\in \mathbb{P}_{k+1}(K)\quad \forall\, K\in \mathcal{T}_h\right\},\nonumber
\end{align}
and are inf-sup stable for the bilinear form $\tilde{b}_2$. 
In addition, and according to \cite{gatica2013priori}, they are inf-sup stable together with the space  
\begin{equation}\label{eq:hat-X-1-h-a}
\widetilde{\bX}_{1,h}:=\left\{{\be}_h \in \mathbb{L}^{2}(\Omega):\;\; {\be}_{h}|_{K}\in \mathbb{P}_k(K)\oplus[\mathbf{B}_k(K)]^d
\quad \forall\, K\in \mathcal{T}_h\right\},
\end{equation}
with respect to $b_1$ (see also \cite{gatica03,gomez23} where also the deviatoric part of the bubble functions is used in the enrichment).

Moreover, Arnold--Falk--Winther finite elements are in \cite{arnold-2007}: 
\begin{align}
\widetilde{\bY}_{h}&:=\left\{{\btau}_h \in \mathbb{H}(\mathbf{div},\Omega):\  {\btau}_{h}|_{K}\in \mathbb{BDM}_{k+1}(K)\quad \forall\, K\in \mathcal{T}_h\right\}, \nonumber\\\label{fe:afw}
\widetilde{\bZ}_{1,h}&:= \left\{\bv_h \in \bL^2(\Omega): \  \bv_h|_{K}\in \textbf{P}_{k}(K)  \quad \forall\, K\in \mathcal{T}_h\right\},\\ 
\widetilde{\bZ}_{2,h}&:=\left\{\bbeta_h\in \bbLskew:\  \bbeta_h|_{K}\in \mathbb{P}_{k}(K) \quad \forall\, K\in \mathcal{T}_h\right\},
\nonumber
\end{align}
and, together with  the space 
\begin{equation}\label{eq:hat-X-1-h-b}
\widetilde{\bX}_{1,h}:=\left\{{\be}_h \in \mathbb{L}^{2}(\Omega):\;\; {\be}_{h}|_{K}\in \mathbb{BDM}_{k+1}(K)\quad \forall\, K\in \mathcal{T}_h\right\},
\end{equation}
they are inf-sup stable with respect to the bilinear forms $\tilde{b}_2$ and $\tilde{b}_1$. 

In what follows, we will develop the analysis with the subspaces defined in \eqref{fe:peers}--\eqref{eq:hat-X-1-h-a}. The analysis for the subspaces defined in \eqref{fe:afw}--\eqref{eq:hat-X-1-h-b} is conducted  in an analogous manner.

Let us denote the product spaces $\widetilde{\bX}_h:=\widetilde{\bX}_{1,h}\times\bX_{2,h}$ and $\widetilde{\bZ}_h:=\widetilde{\bZ}_{1,h}\times\widetilde{\bZ}_{2,h}$, and note that the finite element subspaces  $\widetilde{\bX}_h\times\widetilde{\bY}_h\times\widetilde{\bZ}_h$ are inf-sup stable for the bilinear form $\tilde{b}_2$ (cf. \cite[Section 4.5]{gatica14}) 
\begin{equation}\label{eq:infsup-hat-b2-h}
\sup_{\bzero\neq(\ube_h,\btau_h)\in \widetilde{\bX}_h\times\widetilde{\bY}_h } \frac{\tilde{b}_2((\ube_h,\btau_h),\ubv_h)}{\|(\ube_h,\btau_h)\|_{\widetilde{\bX}\times\widetilde{\bY}}} \geq \tilde{\beta}^*_{b_2}\,\|\ubv_h\|_{\widetilde{\bZ}}
\quad \forall\,\ubv_h\in \widetilde{\bZ}_h.
\end{equation}
In addition, it is straightforward to see that the kernel of the bilinear form $\tilde{b}_2$ can be characterised by
\begin{equation}\label{eq:kernel-hat-b2-h}
\widetilde{\bX}_h\times\widetilde{\bY}_{h,0},\quad \text{with}\quad \widetilde{\bY}_{h,0} = \{\btau_h \in \widetilde{\bY}_h:\ \bdiv\, \btau_h = \bzero \qan \btau=\btau^{\tt t}\},
\end{equation}
and, for all $\btau_h\in\widetilde{\bY}_{h,0}$, it is clear that $\btau_h\in\widetilde{\bX}_{1,h}$, then we can take $\be_h=\btau_h$, and $\tilde{b}_1$ satisfy the inf-sup condition 
\begin{equation}\label{eq:infsup-hat-b1-h}
\ds \sup_{\bzero\neq\ube_h\in \widetilde{\bX}_h}  \frac{\tilde{b}_1(\ube_h,\btau_h)}{\|\ube_h\|_{\widetilde{\bX}}} \geq \|\btau_h\|_{\widetilde{\bY}}
\quad \forall\,\btau_h\in \widetilde{\bY}_{0,h}. 
\end{equation}

Finally, the scheme associated with the weak formulation \eqref{eq:weak2} consists in finding $((\ubd_h,\bsigma_h),\ubu_h)\in (\widetilde{\bX}_h\times\widetilde{\bY}_h)\times\widetilde{\bZ}_h$ such that 
\begin{equation}\label{eq:weak2-h}
\begin{array}{rlll}
\ds \tilde{a}(\ubd_h,\ube_h)  & +\quad \tilde{b}_1(\ube_h,\bsigma_h) & \, & = \widetilde{G}(\ube_h), \\ [1ex]
\ds \tilde{b}_1(\ubd_h,\btau_h)  & \, & +\quad \tilde{b}_2((\ube_h,\btau_h),\ubu_h) & =\widetilde{H}(\btau_h), \\ [1ex]
\,  & \qquad \tilde{b}_2((\ubd_h,\bsigma_h),\ubv_h) & \, & = \widetilde{F}(\ubv_h),
\end{array}
\end{equation}
for all $((\ube_h,\btau_h),\ubv_h)\in (\widetilde{\bX}_h\times\widetilde{\bY}_h)\times\widetilde{\bZ}_h$, with $\ubd_h=(\bd_h,p_h)$, $\ube_h=(\be_h,q_h)$, $\ubu_h=(\bu_h,\bgamma_h)$ and $\ubv_h=(\bv_h,\bbeta_h)$.

\begin{remark}
Due to the definition of the bilinear forms $b_1$, $b_2$, $\tilde{b}_1$ and $\tilde{b}_2$ (cf. \eqref{eq:def-a-b1-b2} and \eqref{eq:def-hat-a-b1-b2}), the subspaces $\bX_{1,h}$ and $\widetilde{\bX}_{1,h}$, defined in \eqref{eq:X-1-h} and \eqref{eq:hat-X-1-h-a} (or \eqref{eq:hat-X-1-h-b}), can be chosen in many ways; it is sufficient that $\bdiv\, \bY_h \subseteq \bX_{1,h}$ and $\bdiv\, \widetilde{\bY}_h \subseteq \widetilde{\bX}_{1,h}$, with this, the inf-sup condition of $b_1$ and $\tilde{b}_1$ in the kernel of $b_2$ and $\tilde{b}_2$, respectively, can be ensured. For example, we can take the space  $\bX_{1,h}:=\left\{{\be}_h \in \bbLsym:\;\; {\btau}_{h}|_{K}\in \mathbb{P}_{k+2}(K)\  \forall\, K\in \mathcal{T}_h\right\}$, to approximate the strain tensor in the first Galerkin scheme, but it is more expensive.
\end{remark}

\subsection{Unique solvability of the discrete problem}
In this section we analyse the Galerkin schemes \eqref{eq:weak1-h} and \eqref{eq:weak2-h}. We want to emphasise that the analysis of wellposedness can be easily accomplished by applying the results obtained for the continuous problem to the discrete scenario, which is why most of the specific details can be excluded.

Firstly, and 
similarly to the continuous case, we define the following sets 
\begin{equation}\label{eq:set-W-h}
 \bW_h := \Big\{ \ubw_h:=(\bw_h,s_h)\in\bX_h \,:\; \|\ubw_h\|_{\bX} \leq r \Big\}, \quad \widetilde{\bW}_h := \Big\{ \ubw_h:=(\bw_h,s_h)\in\widetilde{\bX}_h \,:\; \|\ubw_h\|_{\widetilde{\bX}} \leq r \Big\} .  
\end{equation}
Next, for a fixed $\ubw_h:=(\bw_h,s_h)$ in $\bW_h$ or $\widetilde{\bW}_h$, we have that the bilinear forms $a_{\ubw}$ and $\tilde{a}_{\ubw}$ defined in \eqref{eq:atilde}, satisfy:
\begin{equation}\label{eq:coer-a-h}
\begin{array}{cc}
a_{\ubw_h}(\ube_h,\ube_h) \geq c_a \|\ube_h\|^2_{\bX} \qquad \forall\, \ube_h \in \bX_h,\\
\tilde{a}_{\ubw_h}(\ube_h,\ube_h) \geq c_a \|\ube_h\|^2_{\widetilde{\bX}} \qquad \forall\, \ube_h \in \widetilde{\bX}_h.
\end{array}
\end{equation}
Then, and again analogously to the continuous case, we define the following fixed-point operators 
\begin{equation}\label{def:operator-J-h}
\begin{array}{cc}
\cJ_h: \bW_h\subseteq \bX_h\to \bX_h,\quad \ubw_h\mapsto \cJ_h(\ubw_h) := \ubd_h,\\[2ex]
\widetilde{\cJ}_h: \widetilde{\bW}_h\subseteq \widetilde{\bX}_h\to \widetilde{\bX}_h,\quad \ubw_h\mapsto \widetilde{\cJ}_h(\ubw_h) := \ubd_h,
\end{array}
\end{equation}
where, given $\ubw_h=(\bw_h,s_h)\in\bW_h$, $\cJ_h(\ubw_h)=\ubd_h=(\bd_h,p_h)\in\bX_h$ is the first component of the solution of the linearised version of problem \eqref{eq:weak1-h}: Find $((\ubd_h,\bsigma_h),\bu_h)\in \bX_h\times\bY_h\times\bZ_h$ such that
\begin{equation}\label{eq:weak1-h-linearized}
\begin{array}{rlll}
\ds a_{\ubw_h}(\ubd_h,\ube_h)  & +\quad b_1(\ube_h,\bsigma_h) & \, & = G(\ube_h), \\ [1ex]
\ds b_1(\ubd_h,\btau_h)  & \, & +\quad b_2(\btau_h,\bu_h) & =H(\btau_h), \\ [1ex]
\,  & \qquad b_2(\bsigma_h,\bv_h) & \, & = F(\bv_h),
\end{array}
\end{equation}
for all $((\ube_h,\btau_h),\bv_h)\in (\bX_h\times\bY_h)\times\bZ_h$. 

On the other hand, given $\ubw_h=(\bw_h,s_h)\in\widetilde{\bW}_h$, $\widetilde{\cJ}_h(\ubw_h)=\ubd_h=(\bd_h,p_h)\in\widetilde{\bX}_h$ is the first component of the solution of the linearised version of problem \eqref{eq:weak2-h}: Find $((\ubd_h,\bsigma_h),\ubu_h)\in \widetilde{\bX}_h\times\widetilde{\bY}_h\times\widetilde{\bZ}_h$ such that
\begin{equation}\label{eq:weak2-h-linearized}
\begin{array}{rlll}
\ds \tilde{a}_{\ubw_h}(\ubd_h,\ube_h)  & +\quad \tilde{b}_1(\ube_h,\bsigma_h) & \, & = \widetilde{G}(\ube_h), \\ [1ex]
\ds \tilde{b}_1(\ubd_h,\btau_h)  & \, & +\quad \tilde{b}_2(\btau_h,\ubu_h) & =\widetilde{H}(\btau_h), \\ [1ex]
\,  & \qquad \tilde{b}_2(\bsigma_h,\ubv_h) & \, & = \widetilde{F}(\ubv_h),
\end{array}
\end{equation}
for all $((\ube_h,\btau_h),\ubv_h)\in (\widetilde{\bX}_h\times\widetilde{\bY}_h)\times\widetilde{\bZ}_h$.

It is clear that $((\ubd_h,\bsigma_h),\bu_h)$ is a solution to \eqref{eq:weak1-h} if and only if $\ubd_h$ satisfies $\cJ_h(\ubd_h) = \ubd_h$, and consequently, the wellposedness of \eqref{eq:weak1-h} is equivalent to the unique solvability of the fixed-point problem: Find $\ubd_h\in \bW_h$ such that
\begin{equation}\label{eq:fixed-point-problem-1-h}
\cJ_h(\ubd_h) = \ubd_h.
\end{equation}
Much in the same way as above, the tuple $((\ubd_h,\bsigma_h),\ubu_h)$ is a solution to \eqref{eq:weak2-h} if and only if $\ubd_h$ satisfies $\widetilde{\cJ}_h(\ubd_h) = \ubd_h$, and consequently, the wellposedness of \eqref{eq:weak2-h} is equivalent to the unique solvability of the fixed-point problem: Find $\ubd_h\in \widetilde{\bW}_h$ such that
\begin{equation}\label{eq:fixed-point-problem-2-h}
\widetilde{\cJ}_h(\ubd_h) = \ubd_h.
\end{equation}
\noindent In this way, in what follows we focus on proving the unique solvability of \eqref{eq:fixed-point-problem-1-h} and \eqref{eq:fixed-point-problem-2-h}. 
\begin{lemma}\label{lem:well-def-J-h}
Given $r>0$, assume that 
\begin{equation}\label{eq:assumption-J-h}
\dfrac{\gamma^*_1}{r} \big( 
 \|g\|_{0,\Omega}+ \|r_\Gamma \|_{-1/2,\Gamma} + \|\bu_\Gamma\|_{1/2,\Gamma} + \|\fb\|_{0,\Omega}   \big)\leq 1,
\end{equation}
where $\gamma^*_1$ is the discrete version of $\gamma_1$ (cf. \eqref{eq:def-gamma}), defined by
\begin{equation}\label{eq:def-gamma-*}
\gamma^*_1:=\dfrac{(C_{a} + 1 + \beta^*_2 +\gamma_2)^2}{(\beta^*_2)^2\, \gamma_2},
\end{equation}
with $\gamma_2$ defined in \eqref{eq:def-gamma}. Then, given $\ubw_h\in\bW_h$ (cf. \eqref{eq:set-W-h}) there exists a unique $\ubd_h\in\bW_h$ such that $\cJ_h(\ubw_h) = \ubd_h$.
\end{lemma}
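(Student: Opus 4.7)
The plan is to mirror the continuous argument of Lemma~\ref{lem:well-def-J} in the discrete setting, since all the ingredients needed for that argument have discrete analogues already collected in Section~\ref{sec:spaces}. The key observation is that the finite element spaces are chosen precisely so that the boundedness, coercivity and inf-sup properties used at the continuous level carry over verbatim, only with the constant $\beta_{b_2}$ replaced by its discrete counterpart $\beta_2^*$ from \eqref{eq:infsup-b2-h}.

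First I would introduce the discrete bilinear form
\begin{equation*}
A_{\ubw_h}\big((\ubd_h,\bsigma_h),(\ube_h,\btau_h)\big):=a_{\ubw_h}(\ubd_h,\ube_h)+b_1(\ube_h,\bsigma_h)+b_1(\ubd_h,\btau_h),
\end{equation*}
and verify that, restricted to $\bX_h\times\bY_{h,0}$, it induces a linear operator which is bounded and invertible. Boundedness follows from \eqref{eq:bounded-a} and \eqref{eq:bounded-b1-b2}. To obtain invertibility, I would apply the Banach--Ne\v{c}as--Babu\v{s}ka theory exactly as in the continuous case, combining the coercivity \eqref{eq:coer-a-h} of $a_{\ubw_h}$ on all of $\bX_h$ with the discrete inf-sup condition \eqref{eq:infsup-b1-h} for $b_1$ on $\bY_{h,0}$; note that $\bY_{h,0}\subseteq \bX_{1,h}$ because $\bdiv\,\bY_h\subseteq\bX_{1,h}$ (cf.\ the remark at the end of Section~\ref{sec:spaces}), which is what makes the test function $\be_h=\btau_h$ admissible.

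Next, using the discrete inf-sup condition \eqref{eq:infsup-b2-h} for $b_2$ together with the previous step, a direct application of the discrete version of the twofold saddle-point result (that is, the discrete analogue of the estimate \cite[Proposition 2.36]{ernguermond} used in the proof of Lemma~\ref{lem:well-def-J}) yields the existence of a unique triple $((\ubd_h,\bsigma_h),\bu_h)\in(\bX_h\times\bY_h)\times\bZ_h$ solving \eqref{eq:weak1-h-linearized}, together with the a priori bound
\begin{equation*}
\|((\ubd_h,\bsigma_h),\bu_h)\|\le \gamma_1^{*}\sup_{\bzero\neq((\ube_h,\btau_h),\bv_h)}\frac{A_{\ubw_h}((\ubd_h,\bsigma_h),(\ube_h,\btau_h))+b_2((\ube_h,\btau_h),\bu_h)+b_2((\ubd_h,\bsigma_h),\bv_h)}{\|((\ube_h,\btau_h),\bv_h)\|},
\end{equation*}
where the explicit form of $\gamma_1^{*}$ in \eqref{eq:def-gamma-*} is the discrete mirror of \eqref{eq:def-gamma} with $\beta_{b_2}$ replaced by $\beta_2^{*}$. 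Plugging in the right-hand side of \eqref{eq:weak1-h-linearized} and using the continuity bounds \eqref{des:bound-F-H-G} of $G$, $H$ and $F$ (which are inherited by the discrete spaces), the supremum is controlled by the data norm $\|g\|_{0,\Omega}+\|r_\Gamma\|_{-1/2,\Gamma}+\|\bu_\Gamma\|_{1/2,\Gamma}+\|\fb\|_{0,\Omega}$.

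Finally, since $\|\ubd_h\|_{\bX}\le\|((\ubd_h,\bsigma_h),\bu_h)\|$, assumption \eqref{eq:assumption-J-h} forces $\|\ubd_h\|_{\bX}\le r$, so that $\ubd_h\in\bW_h$ and $\cJ_h$ is well defined with image in $\bW_h$. I do not foresee a genuine technical obstacle: the main thing to be careful about is making sure that the discrete inf-sup constants that appear are exactly the ones in the definition \eqref{eq:def-gamma-*} of $\gamma_1^{*}$, and that the kernel characterisation \eqref{eq:kernel-b2-h} together with $\bdiv\,\bY_h\subseteq\bX_{1,h}$ is what permits the choice $\be_h=\btau_h$ in the discrete inf-sup for $b_1$ on $\bY_{h,0}$, which is the only nontrivial point where the discrete argument could have failed.
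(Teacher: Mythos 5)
Your proposal is correct and follows essentially the same route as the paper: the paper likewise transplants the continuous argument of Lemma~\ref{lem:well-def-J}, using \eqref{eq:bounded-b1-b2}, \eqref{eq:bounded-a}, \eqref{eq:coer-a-h}, \eqref{eq:infsup-b1-h}, \eqref{eq:infsup-b2-h} and \cite[Proposition 2.36]{ernguermond} to deduce the discrete inf-sup estimate with constant $\gamma_1^*$, then invokes the Banach--Ne\v{c}as--Babu\v{s}ka theorem (together with the observation that in finite dimensions injectivity implies surjectivity) to obtain unique solvability of \eqref{eq:weak1-h-linearized}, and finally uses the a priori bound with assumption \eqref{eq:assumption-J-h} to conclude $\ubd_h\in\bW_h$. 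The only point you leave slightly implicit, which the paper states explicitly, is that the one-sided discrete inf-sup yields existence as well as uniqueness precisely because the discrete problem is square and finite dimensional.
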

\begin{proof}
Given $\ubw_h=(\be_h,q_h)\in\bW_h$, we proceed analogously to the proof of Lemma~\ref{lem:well-def-J} and utilise \eqref{eq:bounded-b1-b2}, \eqref{eq:bounded-a}, \eqref{eq:infsup-b2-h}, \eqref{eq:infsup-b1-h}, \eqref{eq:coer-a-h} and \cite[Proposition 2.36]{ernguermond} to deduce the discrete inf-sup condition
\begin{equation}\label{eq:J-from-W-to-W-h}
\begin{array}{ll}
\|(\ubd_h,\bsigma_h),\bu_h\| \\[2ex]
\qquad \ds \leq \gamma^*_1\, \sup_{\bzero\neq((\ube_h,\btau_h),\bv_h)\in 
 (\bX_h\times\bY_h)\times\bZ_h } \frac{A_{\ubw_h}( (\ubd_h,\bsigma_h),(\ube_h,\btau_h) ) + b_2((\ubd_h,\bsigma_h),\bv_h) + b_2((\ube_h,\btau_h),\bu_h)}{\|(\ube_h,\btau_h),\bv_h\|}.
\end{array}
\end{equation}
Therefore, owing to the fact that for finite dimensional linear problems  surjectivity and injectivity are equivalent, from \eqref{eq:J-from-W-to-W-h} and the Banach--Ne\v cas--Babu\v ska theorem we obtain that there exists a unique $((\ubd_h,\bsigma_h),\bu_h)\in \bX_h\times\bY_h\times\bZ_h$ satisfying \eqref{eq:weak1-h-linearized}, with $\ubd_h\in\bW_h$, which concludes the proof.
\end{proof}

Using the same steps we can prove the following result. 
\begin{lemma}\label{lem:well-def-hat-J-h}
Given $r>0$, assume that 
\begin{equation}\label{eq:assumption-hat-J-h}
\dfrac{\tilde{\gamma}^*_1}{r} \big( 
 \|g\|_{0,\Omega}+ \|r_\Gamma \|_{-1/2,\Gamma} + \|\bu_\Gamma\|_{1/2,\Gamma} + \|\fb\|_{0,\Omega}   \big)\leq 1,
\end{equation}
where $\tilde{\gamma}^*_1$ is the discrete counterpart of $\tilde\gamma_1$ (cf. \eqref{eq:def-hat-gamma}), defined by
\begin{equation}\label{eq:def-hat-gamma-*}
\tilde{\gamma}^*_1:=\dfrac{(C_{a} + 1 + \tilde{\beta}^*_2 +\gamma_2)^2}{(\tilde{\beta}^*_2)^2\, \gamma_2},
\end{equation}
and with $\gamma_2$ defined in \eqref{eq:def-gamma}. Then, given $\ubw_h\in\widetilde{\bW}_h$ (cf. \eqref{eq:set-W-h}) there exists a unique $\ubd_h\in\widetilde{\bW}_h$ such that $\widetilde{\cJ}_h(\ubw_h) = \ubd_h$.
\end{lemma}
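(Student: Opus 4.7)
The plan is to mirror the proof of Lemma~\ref{lem:well-def-J-h}, simply swapping the spaces $\bX_h,\bY_h,\bZ_h$ and the forms $b_1,b_2,a_{\ubw_h}$ for their weak-symmetry counterparts $\widetilde{\bX}_h,\widetilde{\bY}_h,\widetilde{\bZ}_h$ and $\tilde{b}_1,\tilde{b}_2,\tilde{a}_{\ubw_h}$. The argument is basically a verification that all the ingredients already collected in Section~\ref{sec:spaces} slot into the Babu\v ska--Ne\v cas--Babu\v ska framework at the discrete level.

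First, I would fix $\ubw_h=(\bw_h,s_h)\in\widetilde{\bW}_h$ and introduce the bilinear form $\tilde{A}_{\ubw_h}$ on $(\widetilde{\bX}_h\times\widetilde{\bY}_{h,0})\times(\widetilde{\bX}_h\times\widetilde{\bY}_{h,0})$ as in \eqref{eq:A-tilde}. Boundedness follows from \eqref{eq:bounded-a} and \eqref{eq:bounded-hat-b1-b2}. For the inf-sup property of $\tilde{A}_{\ubw_h}$ on the discrete kernel $\widetilde{\bX}_h\times\widetilde{\bY}_{h,0}$ (cf.~\eqref{eq:kernel-hat-b2-h}), I would combine the coercivity of $\tilde{a}_{\ubw_h}$ on all of $\widetilde{\bX}_h$ (cf.~\eqref{eq:coer-a-h}) with the discrete inf-sup condition \eqref{eq:infsup-hat-b1-h} for $\tilde{b}_1$ on $\widetilde{\bY}_{h,0}$, exactly as in the continuous case, producing a constant depending only on $C_a$, $c_a$ and the inf-sup constant of $\tilde{b}_1$, i.e., precisely $\gamma_2$ from \eqref{eq:def-gamma}.

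Next, with the inf-sup condition \eqref{eq:infsup-hat-b2-h} for $\tilde{b}_2$ in hand (constant $\tilde{\beta}^*_2$), I would assemble the full global inf-sup condition for the twofold saddle-point operator associated with \eqref{eq:weak2-h-linearized} by invoking \cite[Proposition 2.36]{ernguermond} (as was done in \eqref{eq:J-from-W-to-W-h}). This gives
\begin{equation*}
\|((\ubd_h,\bsigma_h),\ubu_h)\|\,\leq\, \tilde{\gamma}^*_1\!\!\! \sup_{\substack{\bzero\neq((\ube_h,\btau_h),\ubv_h) \\ \in(\widetilde{\bX}_h\times\widetilde{\bY}_h)\times\widetilde{\bZ}_h}}\!\!\!\frac{\tilde{A}_{\ubw_h}((\ubd_h,\bsigma_h),(\ube_h,\btau_h))+\tilde{b}_2((\ubd_h,\bsigma_h),\ubv_h)+\tilde{b}_2((\ube_h,\btau_h),\ubu_h)}{\|((\ube_h,\btau_h),\ubv_h)\|},
\end{equation*}
with $\tilde{\gamma}^*_1$ exactly as in \eqref{eq:def-hat-gamma-*}. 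Since the resulting linear system on $\widetilde{\bX}_h\times\widetilde{\bY}_h\times\widetilde{\bZ}_h$ is finite-dimensional, injectivity is equivalent to surjectivity, so Banach--Ne\v cas--Babu\v ska yields the unique solvability of \eqref{eq:weak2-h-linearized}.

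Finally, to conclude $\ubd_h\in\widetilde{\bW}_h$ I would bound the supremum above by using \eqref{des:bound-F-H-G} to get
\begin{equation*}
\|((\ubd_h,\bsigma_h),\ubu_h)\|\,\leq\,\tilde{\gamma}^*_1\bigl(\|g\|_{0,\Omega}+\|r_\Gamma\|_{-1/2,\Gamma}+\|\bu_\Gamma\|_{1/2,\Gamma}+\|\fb\|_{0,\Omega}\bigr),
\end{equation*}
and then invoke the smallness assumption \eqref{eq:assumption-hat-J-h} to deduce $\|\ubd_h\|_{\widetilde{\bX}}\leq r$. There is no real obstacle here; the only point requiring care is tracking the constants to verify that the algebra linking $C_a$, $c_a$, $\tilde{\beta}^*_2$ and $\gamma_2$ indeed reproduces the expression \eqref{eq:def-hat-gamma-*}, but this is identical bookkeeping to the one used in Lemma~\ref{lem:well-def-J-h} with $\beta^*_2$ replaced by $\tilde{\beta}^*_2$.
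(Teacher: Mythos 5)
Your proposal is correct and follows essentially the same route as the paper: the paper's own proof of this lemma is a one-line deferral to the argument of Lemma~\ref{lem:well-def-J-h}, whose ingredients (coercivity \eqref{eq:coer-a-h}, the discrete inf-sup conditions \eqref{eq:infsup-hat-b1-h} and \eqref{eq:infsup-hat-b2-h}, \cite[Proposition 2.36]{ernguermond}, the finite-dimensional injectivity-implies-surjectivity argument, and the bound \eqref{des:bound-F-H-G} combined with \eqref{eq:assumption-hat-J-h}) you have identified and assembled exactly as intended. The only blemish is cosmetic: you write \emph{Babu\v ska--Ne\v cas--Babu\v ska} at one point where \emph{Banach--Ne\v cas--Babu\v ska} is meant.
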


The following theorems provide the main result of this section, namely, existence and uniqueness of solution to the fixed-point
problems \eqref{eq:fixed-point-problem-1-h} and \eqref{eq:fixed-point-problem-2-h}, or equivalently, the wellposedness of problems \eqref{eq:weak1-h} and \eqref{eq:weak2-h}.

\begin{theorem}\label{theorem:unique-solution-weak1-h}
Let $\fb \in \bL^2(\Omega)$, $g \in L^2(\Omega)$, $\bu_\Gamma \in \bH^{1/2}(\Gamma)$ and $r_\Gamma \in \rH^{-1/2}(\Gamma)$ such that
\begin{equation}\label{eq:assumption-J-2-h}
\ds \dfrac{\gamma^*_1}{r}\max\{\gamma^*_1\,\kappa_2\,r,1\}\, \big( 
 \|g\|_{0,\Omega}+ \|r_\Gamma \|_{-1/2,\Gamma} + \|\bu_\Gamma\|_{1/2,\Gamma} + \|\fb\|_{0,\Omega}   \big) < 1,
\end{equation}
where $\gamma^*_1$ is defined in \eqref{eq:def-gamma-*}.
Then, the operator $\cJ_h$ (cf. \eqref{def:operator-J-h}) has a unique fixed point $\ubd_h\in\bW_h$. Equivalently, the problem \eqref{eq:weak1-h} has a unique solution $((\ubd_h,\bsigma_h),\bu_h)\in (\bX_h\times\bY_h)\times\bZ_h$ with $\ubd_h\in\bW_h$. In addition, there exists $C^*>0$ such that
\begin{equation}\label{eq:stability-weak1-h}
\|(\ubd_h,\bsigma_h),\bu_h\| \leq C^* \big( 
 \|g\|_{0,\Omega}+ \|r_\Gamma \|_{-1/2,\Gamma} + \|\bu_\Gamma\|_{1/2,\Gamma} + \|\fb\|_{0,\Omega}   \big).
\end{equation}
\end{theorem}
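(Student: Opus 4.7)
The plan is to mirror the continuous proof of Theorem~\ref{theorem:unique-solution-weak1} at the discrete level, exploiting Lemma~\ref{lem:well-def-J-h} (well-definedness of $\cJ_h$ as a self-map of $\bW_h$) together with a contraction argument built on the discrete a priori estimate \eqref{eq:J-from-W-to-W-h}. Since smallness assumption \eqref{eq:assumption-J-2-h} implies \eqref{eq:assumption-J-h} (the factor $\max\{\gamma^*_1\kappa_2 r,1\}\geq 1$ in \eqref{eq:assumption-J-2-h} is at least the left-hand side of \eqref{eq:assumption-J-h}), Lemma~\ref{lem:well-def-J-h} guarantees that $\cJ_h(\bW_h)\subseteq \bW_h$, so it remains to show that $\cJ_h$ is a strict contraction on $\bW_h$ and then invoke Banach's fixed-point theorem.

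First I would take $\ubw_{h,1}=(\bw_{h,1},s_{h,1})$, $\ubw_{h,2}=(\bw_{h,2},s_{h,2})\in \bW_h$ and set $\ubd_{h,i}=(\bd_{h,i},p_{h,i})=\cJ_h(\ubw_{h,i})$, with associated $(\bsigma_{h,i},\bu_{h,i})$ solving \eqref{eq:weak1-h-linearized}. Subtracting the two systems, adding and subtracting $a_{\ubw_{h,1}}(\ubd_{h,2},\cdot)$ as in \eqref{eq:A1-A2}, yields
\begin{equation*}
A_{\ubw_{h,1}}\bigl((\ubd_{h,1}-\ubd_{h,2},\bsigma_{h,1}-\bsigma_{h,2}),(\ube_h,\btau_h)\bigr) + b_2((\ube_h,\btau_h),\bu_{h,1}-\bu_{h,2}) + b_2((\ubd_{h,1}-\ubd_{h,2},\bsigma_{h,1}-\bsigma_{h,2}),\bv_h) = \int_\Omega (\kappa(\ubw_{h,2})-\kappa(\ubw_{h,1}))\nabla p_{h,2}\cdot\nabla q_h,
\end{equation*}
valid for all test triples in the discrete spaces. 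Applying the discrete inf-sup bound \eqref{eq:J-from-W-to-W-h} to the left-hand side and the Lipschitz hypothesis \eqref{prop-kappa} together with $\|\nabla p_{h,2}\|_{0,\Omega}\leq\|\ubd_{h,2}\|_{\bX}\leq r$ (since $\ubd_{h,2}\in\bW_h$) on the right-hand side would produce
\begin{equation*}
\|\cJ_h(\ubw_{h,1})-\cJ_h(\ubw_{h,2})\|_{\bX}\leq \gamma^*_1\,\kappa_2\,\|\nabla p_{h,2}\|_{0,\Omega}\,\|\ubw_{h,1}-\ubw_{h,2}\|_{\bX}.
\end{equation*}
Using the discrete a priori bound \eqref{eq:J-from-W-to-W-h} once more (now applied to $\ubd_{h,2}$) to control $\|\nabla p_{h,2}\|_{0,\Omega}$ by $\gamma^*_1(\|g\|_{0,\Omega}+\|r_\Gamma\|_{-1/2,\Gamma}+\|\bu_\Gamma\|_{1/2,\Gamma}+\|\fb\|_{0,\Omega})$, and invoking assumption \eqref{eq:assumption-J-2-h}, gives a strict contraction constant, so Banach's theorem furnishes a unique fixed point $\ubd_h\in\bW_h$, equivalently a unique discrete solution $((\ubd_h,\bsigma_h),\bu_h)$ of \eqref{eq:weak1-h}.

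Finally, the stability estimate \eqref{eq:stability-weak1-h} is obtained by specialising \eqref{eq:J-from-W-to-W-h} to the fixed-point triple (noting that $A_{\ubd_h}((\ubd_h,\bsigma_h),\cdot) + b_2\text{-terms}$ equals $G+H+F$), together with the bounds on the functionals \eqref{des:bound-F-H-G}, yielding $C^*$ proportional to $\gamma_1^*$. The only real novelty over the continuous argument is to keep track of the discrete inf-sup constants $\beta_2^*$ (packaged into $\gamma_1^*$) rather than $\beta_2$; I do not foresee a genuine obstacle since the discrete inf-sup conditions \eqref{eq:infsup-b2-h}--\eqref{eq:infsup-b1-h}, the kernel characterisation \eqref{eq:kernel-b2-h}, and the discrete coercivity \eqref{eq:coer-a-h} are precisely the tools that were used at the continuous level, and finite-dimensionality makes the Banach--Ne\v{c}as--Babu\v{s}ka step in Lemma~\ref{lem:well-def-J-h} immediate. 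The proof therefore reduces to transcribing the continuous argument with the symbols $\beta_2$, $\gamma_1$ replaced by $\beta_2^*$, $\gamma_1^*$, and can be stated very briefly.
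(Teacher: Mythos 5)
Your proposal is correct and reproduces the paper's own argument: the paper simply transcribes Theorem~\ref{theorem:unique-solution-weak1} to the discrete setting, with $\gamma_1$ replaced by $\gamma_1^*$ via the discrete inf-sup estimate \eqref{eq:J-from-W-to-W-h}, arriving at the same contraction constant $(\gamma_1^*)^2\kappa_2\big(\|g\|_{0,\Omega}+\|r_\Gamma\|_{-1/2,\Gamma}+\|\bu_\Gamma\|_{1/2,\Gamma}+\|\fb\|_{0,\Omega}\big)$ and concluding by Banach's fixed-point theorem. Your preliminary observation that \eqref{eq:assumption-J-2-h} implies \eqref{eq:assumption-J-h} (so Lemma~\ref{lem:well-def-J-h} applies) is also what the paper invokes, and your derivation of \eqref{eq:stability-weak1-h} from the discrete inf-sup bound matches the paper's closing remark.
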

\begin{proof}
First we observe that, as for the continuous case, assumption \eqref{eq:assumption-J-2-h} ensures the well-definiteness of the operator $\cJ_h$. Now, adapting the arguments utilised in Theorem \ref{theorem:unique-solution-weak1}
one can obtain the following estimate
\begin{equation*}
\begin{array}{ll}
\|\cJ(\ubw_{1})- \cJ(\ubw_{2})\|_\bX \,=\, \|\ubd_{1} - \ubd_{2}\|_\bX \\[2ex]
 \qquad\ds\leq (\gamma^*_1)^2\,\kappa_2\,  \big( 
 \|g\|_{0,\Omega}+ \|r_\Gamma \|_{-1/2,\Gamma} + \|\bu_\Gamma\|_{1/2,\Gamma} + \|\fb\|_{0,\Omega}   \big) \|\ubw_1 - \ubw_2\|_{\bX}.
\end{array}
\end{equation*}
for all $\ubw_1, \ubw_2\in \bW_h$. In this way, using estimate \eqref{eq:assumption-J-2-h} we obtain that $\cJ_h$ is a contraction mapping on $\bW_h$, thus problem \eqref{eq:fixed-point-problem-1-h}, or equivalently
\eqref{eq:weak1-h} is wellposed. Finally, estimate \eqref{eq:stability-weak1-h} is obtained analogously to \eqref{eq:J-from-W-to-W}, which completes the proof.
\end{proof}

Using analogous arguments as above we can assert the following result.  
\begin{theorem}\label{theorem:unique-solution-weak2-h}
Let $\fb \in \bL^2(\Omega)$, $g \in L^2(\Omega)$, $\bu_\Gamma \in \bH^{1/2}(\Gamma)$ and $r_\Gamma \in \rH^{-1/2}(\Gamma)$ such that
\begin{equation}\label{eq:assumption-hat-J-2-h}
\ds \dfrac{\tilde{\gamma}^*_1}{r}\max\{\tilde{\gamma}^*_1\,\kappa_2\,r,1\}\, \big( 
 \|g\|_{0,\Omega}+ \|r_\Gamma \|_{-1/2,\Gamma} + \|\bu_\Gamma\|_{1/2,\Gamma} + \|\fb\|_{0,\Omega}   \big) < 1,
\end{equation}
where $\tilde{\gamma}^*_1$ is defined in \eqref{eq:def-hat-gamma-*}.
Then, the operator $\widetilde{\cJ}_h$ (cf. \eqref{def:operator-J-h}) has a unique fixed point $\ubd_h\in\widetilde{\bW}_h$. Equivalently,  problem \eqref{eq:weak2-h} has a unique solution $((\ubd_h,\bsigma_h),\ubu_h)\in \widetilde{\bX}_h\times\widetilde{\bY}_h\times\widetilde{\bZ}_h$ with $\ubd_h\in\widetilde{\bW}_h$. In addition, there exists $\tilde{C}^*>0$ such that
\begin{equation}\label{eq:stability-weak2-h}
\|(\ubd_h,\bsigma_h),\ubu_h\| \leq \tilde{C}^* \big( 
 \|g\|_{0,\Omega}+ \|r_\Gamma \|_{-1/2,\Gamma} + \|\bu_\Gamma\|_{1/2,\Gamma} + \|\fb\|_{0,\Omega}   \big).
\end{equation}
\end{theorem}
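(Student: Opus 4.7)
The plan is to mirror the argument of Theorem~\ref{theorem:unique-solution-weak1-h}, replacing the strong-symmetry ingredients by their weak-symmetry counterparts and exploiting the discrete stability results already collected for the PEERS (or Arnold--Falk--Winther) family. First I would observe that assumption \eqref{eq:assumption-hat-J-2-h} implies in particular the hypothesis \eqref{eq:assumption-hat-J-h} of Lemma~\ref{lem:well-def-hat-J-h}, so that $\widetilde{\cJ}_h:\widetilde{\bW}_h\to \widetilde{\bW}_h$ is well-defined and the image of $\widetilde{\bW}_h$ lies in $\widetilde{\bW}_h$. The key quantitative information from this lemma is that the unique solution $((\ubd_h,\bsigma_h),\ubu_h)\in (\widetilde{\bX}_h\times\widetilde{\bY}_h)\times\widetilde{\bZ}_h$ of the linearised discrete problem \eqref{eq:weak2-h-linearized} satisfies the bound
\[
\|((\ubd_h,\bsigma_h),\ubu_h)\| \leq \tilde{\gamma}_1^*\bigl(\|g\|_{0,\Omega}+\|r_\Gamma\|_{-1/2,\Gamma}+\|\bu_\Gamma\|_{1/2,\Gamma}+\|\fb\|_{0,\Omega}\bigr),
\]
obtained exactly as in \eqref{eq:J-from-W-to-W-h} but with the inf-sup constant $\tilde{\beta}_2^*$ from \eqref{eq:infsup-hat-b2-h} in place of $\beta_2^*$ and using \eqref{eq:infsup-hat-b1-h} for $\tilde{b}_1$ on $\widetilde{\bY}_{h,0}$.

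Next I would verify the contraction property of $\widetilde{\cJ}_h$. Given $\ubw_{1,h},\ubw_{2,h}\in \widetilde{\bW}_h$ with images $\ubd_{1,h}=(\bd_{1,h},p_{1,h})$ and $\ubd_{2,h}=(\bd_{2,h},p_{2,h})$, I would subtract the corresponding instances of \eqref{eq:weak2-h-linearized}, so that (in analogy with \eqref{eq:A1-A2}) the difference of the global bilinear forms applied to the difference of the discrete triples equals $\int_\Omega (\kappa(\ubw_{2,h})-\kappa(\ubw_{1,h}))\nabla p_{2,h}\cdot \nabla q_h$. Dividing by the test-function norm and taking supremum, the discrete global inf-sup estimate yields
\[
\|\ubd_{1,h}-\ubd_{2,h}\|_{\widetilde{\bX}} \leq \tilde{\gamma}_1^*\,\kappa_2\,\|p_{1,h}-p_{2,h}\text{'s driver}\|_{1,\Omega}\,\|\nabla p_{2,h}\|_{0,\Omega},
\]
and since $\ubd_{2,h}\in \widetilde{\bW}_h$ the factor $\|\nabla p_{2,h}\|_{0,\Omega}$ is controlled by the stability estimate above. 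The upshot is
\[
\|\widetilde{\cJ}_h(\ubw_{1,h})-\widetilde{\cJ}_h(\ubw_{2,h})\|_{\widetilde{\bX}} \leq (\tilde{\gamma}_1^*)^2\kappa_2\bigl(\|g\|_{0,\Omega}+\|r_\Gamma\|_{-1/2,\Gamma}+\|\bu_\Gamma\|_{1/2,\Gamma}+\|\fb\|_{0,\Omega}\bigr)\|\ubw_{1,h}-\ubw_{2,h}\|_{\widetilde{\bX}}.
\]
Invoking \eqref{eq:assumption-hat-J-2-h} makes the prefactor strictly less than one.

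Finally, Banach's fixed-point theorem applied on the closed ball $\widetilde{\bW}_h\subset \widetilde{\bX}_h$ delivers a unique $\ubd_h\in \widetilde{\bW}_h$ with $\widetilde{\cJ}_h(\ubd_h)=\ubd_h$, which is equivalent to the unique solvability of \eqref{eq:weak2-h}. The continuous dependence \eqref{eq:stability-weak2-h} with $\tilde{C}^*:=\tilde{\gamma}_1^*$ then follows by plugging the fixed point into the linearised stability bound. The only step requiring genuine care (rather than a direct transcription from Theorem~\ref{theorem:unique-solution-weak1-h}) is verifying the discrete inf-sup bound underlying the estimate for $\widetilde{\cJ}_h$: one must combine the kernel characterisation \eqref{eq:kernel-hat-b2-h}, the inf-sup property of $\tilde{b}_2$ in \eqref{eq:infsup-hat-b2-h}, the inf-sup property of $\tilde{b}_1$ in \eqref{eq:infsup-hat-b1-h}, and the discrete coercivity \eqref{eq:coer-a-h} of $\tilde{a}_{\ubw_h}$, mimicking the abstract argument of \cite[Proposition 2.36]{ernguermond} in the twofold saddle-point setting -- this is the only place where the weak-symmetry spaces genuinely intervene.
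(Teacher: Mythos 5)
Your proposal is correct and follows essentially the same route as the paper, which itself proves this theorem simply by appeal to ``analogous arguments'' from Theorem~\ref{theorem:unique-solution-weak1-h}; you have faithfully transcribed that argument to the weak-symmetry setting, replacing $\beta_2^*$ by $\tilde\beta_2^*$, $b_i$ by $\tilde b_i$, and the kernel/inf-sup ingredients by \eqref{eq:kernel-hat-b2-h}, \eqref{eq:infsup-hat-b2-h}, \eqref{eq:infsup-hat-b1-h}. The only blemish is the placeholder ``$\|p_{1,h}-p_{2,h}\text{'s driver}\|_{1,\Omega}$'': what should appear there is $\|s_{1,h}-s_{2,h}\|_{1,\Omega}$, the difference of the pressure components of the \emph{inputs} $\ubw_{1,h},\ubw_{2,h}$ to the fixed-point map (coming from the Lipschitz bound on $\kappa$), which you then correctly dominate by $\|\ubw_{1,h}-\ubw_{2,h}\|_{\widetilde{\bX}}$.
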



\section{A priori error estimates}\label{sec:error}
In this section, we aim to provide the convergence of the Galerkin schemes \eqref{eq:weak1-h} and \eqref{eq:weak1-h} and derive the corresponding rate of convergence. 

\subsection{Preliminaries}\label{sec:preliminaries}
From now on we assume that the hypotheses of Theorems \ref{theorem:unique-solution-weak1},\ref{theorem:unique-solution-weak1-h} and Theorems \ref{theorem:unique-solution-weak2},\ref{theorem:unique-solution-weak2-h} hold and let $((\ubd,\bsigma),\bu)\in (\bX\times\bY)\times\bZ$ and $((\ubd_h,\bsigma_h),\bu_h)\in (\bX_h\times\bY_h)\times\bZ_h$ be the unique solutions of \eqref{eq:weak1} and \eqref{eq:weak1-h}, respectively. In addition, let $((\ubd,\bsigma),\ubu)\in \widetilde{\bX}\times\widetilde{\bY}\times\widetilde{\bZ}$ and $((\ubd_h,\bsigma_h),\ubu_h)\in (\widetilde{\bX}_h\times\widetilde{\bY}_h)\times\widetilde{\bZ}_h$ be the unique solutions of \eqref{eq:weak2} and \eqref{eq:weak2-h}, respectively.

Then, similarly to \cite{CGO2021}, in order to simplify the subsequent analysis, we write $\texttt{e}_{\ubd} = \ubd - \ubd_h$, $\texttt{e}_{\bsigma} = \bsigma - \bsigma_h$ and $\texttt{e}_{\bu} = \bu - \bu_h$.
As usual, for a given $(\wh{\ube}_h,\wh{\bsigma}_h)\in\bX_{h} \times \bY_h$ and $\wh{\bv}_h\in \bZ_{h}$, we shall then decompose these errors into
\begin{equation}\label{eq:decompositions}
\texttt{e}_{\ubd} = \bxi_{\ubd} + \bchi_{\ubd},\qquad
\texttt{e}_{\bsigma} = \bxi_{\bsigma} + \bchi_{\bsigma},\qquad
\texttt{e}_{\bu} = \bxi_{\bu} + \bchi_{\bu},
\end{equation}
with
$$
\bxi_{\ubd} := \ubd - \wh{\ube}_h,\quad 
\bchi_{\ubd} := \wh{\ube}_h - \ubd_h,\quad 
\bxi_{\bsigma} := \bsigma - \wh{\btau}_h,\quad 
\bchi_{\bsigma} := \wh{\btau}_h - \bsigma_h,\quad 
\bxi_{\bu} := \bu - \wh{\bv}_h,\quad 
\bchi_{\bu} := \wh{\bv}_h - \bu_h.
$$
Recalling the definition of the bilinear form $A_{\ubw}$ in \eqref{eq:A}, from \eqref{eq:weak1} and \eqref{eq:weak1-h} we have that the following identities hold
\begin{equation*}
A_{\ubd}( (\ubd,\bsigma),(\ube,\btau) ) + b_2((\ube,\btau),\bu) + b_2((\ubd,\bsigma),\bv) = G(\ube) + H(\btau) + F(\bv),
\end{equation*}
for all $((\ube,\btau),\bv)\in (\bX\times\bY)\times\bZ$, and
\begin{equation*}
A_{\ubd_h}( (\ubd_h,\bsigma_h),(\ube_h,\btau_h) ) + b_2((\ube_h,\btau_h),\bu_h) + b_2((\ubd_h,\bsigma_h),\bv_h) = G(\ube_h) + H(\btau_h) + F(\bv_h).
\end{equation*}
for all $((\ube_h,\btau_h),\bv_h)\in (\bX_h\times\bY_h)\times\bZ_h$. From these relations, and similarly to \eqref{eq:A1-A2}, we can obtain that for all $((\ube_h,\btau_h),\bv_h)\in (\bX_h\times\bY_h)\times\bZ_h$, there holds
\begin{equation*}
A_{\ubd_h}( (\texttt{e}_{\ubd},\texttt{e}_{\bsigma}),(\ube_h,\btau_h) ) + b_2((\ube_h,\btau_h),\texttt{e}_{\bu}) + b_2((\texttt{e}_{\ubd},\texttt{e}_{\bsigma}),\bv_h) = \int_\Omega (\kappa(\ubd_h) - \kappa(\ubd) )\nabla p\cdot \nabla q_h,
\end{equation*}
which together with the definition of the errors in \eqref{eq:decompositions}, implies that 
\begin{equation}\label{eq:auxiliar-equation}
\begin{array}{ll}
A_{\ubd_h}( (\bchi_{\ubd},\bchi_{\bsigma}),(\ube_h,\btau_h) ) + b_2((\ube_h,\btau_h),\bchi_{\bu}) + b_2((\bchi_{\ubd},\bchi_{\bsigma}),\bv_h)\\[2ex]
\ds\quad= -A_{\ubd_h}( (\bxi_{\ubd},\bxi_{\bsigma}),(\ube_h,\btau_h) ) - b_2((\ube_h,\btau_h),\bxi_{\bu}) - b_2((\bxi_{\ubd},\bxi_{\bsigma}),\bv_h)+\int_\Omega (\kappa(\ubd_h) - \kappa(\ubd) ) \nabla p\cdot \nabla q_h,
\end{array}
\end{equation}
for all $((\ube_h,\btau_h),\bv_h)\in (\bX_h\times\bY_h)\times\bZ_h$. Then, since $\ubd_h \in \bW_h$, we apply the discrete inf-sup condition \eqref{eq:J-from-W-to-W-h} at the left-hand side of \eqref{eq:auxiliar-equation} followed by the continuity properties of $a$, $b_1$ and $b_2$ (cf. 
\eqref{eq:bounded-b1-b2} and \eqref{eq:bounded-a}) on the right-hand side of \eqref{eq:auxiliar-equation}, to obtain
\begin{equation}\label{eq:auxiliar-equation-2}
\begin{array}{ll}
\|\bchi_{\ubd}\|_\bX + \|\bchi_{\bsigma}\|_\bY + \|\bchi_{\bu}\|_\bZ \\[2ex]
\qquad \ds \leq \gamma^*_1\,\Big( (C_a + 2) \big(\|\bxi_{\ubd}\|_{\bX} + \|\bxi_{\bsigma}\|_{\bY} + \|\bxi_{\bu}\|_{\bZ} \big) + \kappa_2 \|p_h - p\|_{1,\Omega} \|\nabla p\|_{0,\Omega} \Big)\\[2ex]
\qquad \ds \leq \gamma^*_1\, \Big( (C_a + 2) \big(\|\bxi_{\ubd}\|_{\bX} + \|\bxi_{\bsigma}\|_{\bY} + \|\bxi_{\bu}\|_{\bZ} \big) + \kappa_2 (\|\bxi_{\ubd}\|_\bX + \|\bchi_{\ubd}\|_\bX) \|\ubd\|_{\bX} \Big).
\end{array}
\end{equation}
\subsection{Derivation of C\'ea estimates}
Now we turn to providing a best approximation estimate corresponding with the Galerkin scheme \eqref{eq:weak1-h}.
\begin{theorem}\label{th:cea-estimate}
Assume that
\begin{equation}\label{eq:assumption-cea}
\ds \kappa_2\gamma_1^* \gamma_1\, \big( 
 \|g\|_{0,\Omega}+ \|r_\Gamma \|_{-1/2,\Gamma} + \|\bu_\Gamma\|_{1/2,\Gamma} + \|\fb\|_{0,\Omega}   \big) \leq \dfrac{1}{2},
\end{equation}
with $\gamma_1$ and $\gamma^*_1$ being the constants in \eqref{eq:def-gamma} and \eqref{eq:def-gamma-*}. Furthermore, assume that the hypotheses of Theorem~\ref{theorem:unique-solution-weak1} and Theorem~\ref{theorem:unique-solution-weak1-h} hold. Let $((\ubd,\bsigma),\bu)\in (\bX\times\bY)\times\bZ$ and $((\ubd_h,\bsigma_h),\bu_h)\in (\bX_h\times\bY_h)\times\bZ_h$ be the unique solutions of \eqref{eq:weak1} and \eqref{eq:weak1-h}, respectively.
Then, there exists $C_{\text{C\'ea}}>0$, such that
\begin{equation}\label{eq:cea-estimate}
\ds \|((\ubd,\bsigma),\bu)-((\ubd_h,\bsigma_h),\bu_h)\| \leq C_{\text{C\'ea}} \inf_{((\ube_h,\btau_h),\bv_h)\in 
 (\bX_h\times\bY_h)\times\bZ_h} \|((\ubd,\bsigma),\bu)-((\ube_h,\btau_h),\bv_h)\|.
\end{equation}
\end{theorem}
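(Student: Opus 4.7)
The plan is to exploit the error equation \eqref{eq:auxiliar-equation-2} that was already derived in Section~\ref{sec:preliminaries}. That identity already bounds the ``discrete part'' of the error $(\bchi_{\ubd},\bchi_{\bsigma},\bchi_{\bu})$ in terms of the projection errors $(\bxi_{\ubd},\bxi_{\bsigma},\bxi_{\bu})$ plus a nonlinear contribution coming from the difference $\kappa(\ubd)-\kappa(\ubd_h)$, which has been controlled by the Lipschitz bound \eqref{prop-kappa} and split as $\kappa_2(\|\bxi_{\ubd}\|_{\bX}+\|\bchi_{\ubd}\|_{\bX})\|\ubd\|_{\bX}$. So the first step is simply to recall this inequality.

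The key step is then to absorb the term $\gamma_1^{*}\,\kappa_2\,\|\bchi_{\ubd}\|_{\bX}\,\|\ubd\|_{\bX}$ into the left-hand side. For this I would use the continuous stability estimate \eqref{eq:stability-weak1} from Theorem~\ref{theorem:unique-solution-weak1}, which gives $\|\ubd\|_{\bX}\le\gamma_1(\|g\|_{0,\Omega}+\|r_\Gamma\|_{-1/2,\Gamma}+\|\bu_\Gamma\|_{1/2,\Gamma}+\|\fb\|_{0,\Omega})$. Combining this with the smallness assumption \eqref{eq:assumption-cea} yields $\gamma_1^{*}\kappa_2\|\ubd\|_{\bX}\le \tfrac12$, so the offending term can be moved to the left-hand side, leaving
\begin{equation*}
\tfrac{1}{2}\|\bchi_{\ubd}\|_{\bX}+\|\bchi_{\bsigma}\|_{\bY}+\|\bchi_{\bu}\|_{\bZ}
\le \gamma_1^{*}\bigl(C_a+2+\kappa_2\|\ubd\|_{\bX}\bigr)\bigl(\|\bxi_{\ubd}\|_{\bX}+\|\bxi_{\bsigma}\|_{\bY}+\|\bxi_{\bu}\|_{\bZ}\bigr).
\end{equation*}

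The rest is routine: the triangle inequality applied to the decomposition \eqref{eq:decompositions} gives
$\|\texttt{e}_{\ubd}\|_{\bX}+\|\texttt{e}_{\bsigma}\|_{\bY}+\|\texttt{e}_{\bu}\|_{\bZ}\le \bigl(\|\bxi_{\ubd}\|+\|\bxi_{\bsigma}\|+\|\bxi_{\bu}\|\bigr)+\bigl(\|\bchi_{\ubd}\|+\|\bchi_{\bsigma}\|+\|\bchi_{\bu}\|\bigr)$, and substituting the previous bound leads to an estimate of the total error by a constant (depending on $\gamma_1^{*}$, $C_a$, $\kappa_2$, and the data through $\|\ubd\|_{\bX}$) times $\|\bxi_{\ubd}\|_{\bX}+\|\bxi_{\bsigma}\|_{\bY}+\|\bxi_{\bu}\|_{\bZ}$. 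Since the triple $(\wh{\ube}_h,\wh{\btau}_h,\wh{\bv}_h)\in(\bX_h\times\bY_h)\times\bZ_h$ used to define the $\bxi$'s was arbitrary, taking the infimum over this finite-dimensional subspace yields \eqref{eq:cea-estimate} with a constant $C_{\text{C\'ea}}$ built from $\gamma_1^{*}, C_a, \kappa_2$ and the data norms. The main obstacle is exactly the absorption step — tracking the precise constant needed in \eqref{eq:assumption-cea} so that the nonlinear, data-dependent contribution of the permeability can be reabsorbed without destroying the twofold saddle-point inf-sup control; once this calibration is done, the argument reduces to the standard Strang-type manipulation.
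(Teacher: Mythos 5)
Your proposal is correct and follows essentially the same route as the paper: start from the derived bound \eqref{eq:auxiliar-equation-2}, move the $\gamma_1^*\kappa_2\|\bchi_{\ubd}\|_{\bX}\|\ubd\|_{\bX}$ term to the left, use $\|\ubd\|_{\bX}\le\gamma_1(\|g\|_{0,\Omega}+\|r_\Gamma\|_{-1/2,\Gamma}+\|\bu_\Gamma\|_{1/2,\Gamma}+\|\fb\|_{0,\Omega})$ from \eqref{eq:J-from-W-to-W} together with assumption \eqref{eq:assumption-cea} to retain a factor $\tfrac12$, and finish with the triangle inequality over the decomposition \eqref{eq:decompositions} and the arbitrariness of the discrete comparator. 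The only cosmetic difference is that you cite the stability estimate \eqref{eq:stability-weak1} whereas the paper uses the sharper form \eqref{eq:J-from-W-to-W} with the explicit constant $\gamma_1$, but the calibration argument is identical.
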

\begin{proof}
From \eqref{eq:auxiliar-equation-2} we have 
\begin{equation}\label{eq:auxiliar-equation-4}
\begin{array}{ll}
\|\bchi_{\ubd}\|_\bX \big(1-\kappa_2\, \gamma^*_1 \, \|\ubd\|_{\bX} \big) + \|\bchi_{\bsigma}\|_\bY + \|\bchi_{\bu}\|_\bZ \\[2ex]
\qquad \ds \leq \gamma^*_1\, \Big( (C_a + 2) \big(\|\bxi_{\ubd}\|_{\bX} + \|\bxi_{\bsigma}\|_{\bY} + \|\bxi_{\bu}\|_{\bZ} \big) + \kappa_2 \|\bxi_{\ubd}\|_\bX \|\ubd\|_{\bX} \Big).
\end{array}
\end{equation}
Hence, using the fact that $\ubd$ satisfies \eqref{eq:J-from-W-to-W}, from assumption \eqref{eq:assumption-cea} and the latter inequality, we obtain
\begin{equation}\label{eq:auxiliar-equation-3}
\|\bchi_{\ubd}\|_\bX + \|\bchi_{\bsigma}\|_\bY + \|\bchi_{\bu}\|_\bZ  \leq C\, (\,\|\bxi_{\ubd}\|_{\bX} + \|\bxi_{\bsigma}\|_{\bY} + \|\bxi_{\bsigma}\|_{\bZ} \,),
\end{equation}
with $C>0$ independent of $h$. In this way, from \eqref{eq:decompositions}, \eqref{eq:auxiliar-equation-3} and the triangle inequality we obtain
\begin{equation*}
\|((\texttt{e}_{\ubd},\texttt{e}_{\bsigma}),\texttt{e}_{\bu})\|\leq \|((\bchi_{\ubd},\bchi_{\bsigma}),\bchi_{\bu})\| + \|((\bxi_{\ubd},\bxi_{\bsigma}),\bxi_{\bu})\| \leq (C+1) \|((\bxi_{\ubd},\bxi_{\bsigma}),\bxi_{\bu})\|,
\end{equation*}
which combined with the fact that $(\wh{\ube}_h,\wh{\bsigma}_h)\in\bX_{h} \times \bY_h$ and $\wh{\bv}_h\in \bZ_{h}$ are arbitrary, concludes the proof.
\end{proof}

\begin{remark}\label{rem:strang}
We note that, alternatively to the development in Theorem~\ref{th:cea-estimate} above, we could proceed as in the proof of \cite[Lemma 2.25]{ernguermond} and apply a Strang-type argument to obtain C\'ea’s estimate.
\end{remark}

Employing the same arguments utilised in Section~\ref{sec:preliminaries} and Theorem~\ref{th:cea-estimate}, we can provide the  C\'ea estimate corresponding with the Galerkin scheme \eqref{eq:weak2-h}.
\begin{theorem}\label{th:cea-estimate-2}
Assume that
\begin{equation}\label{eq:assumption-cea-2}
\ds \kappa_2\tilde{\gamma}_1^* \tilde{\gamma}_1\, \big( 
 \|g\|_{0,\Omega}+ \|r_\Gamma \|_{-1/2,\Gamma} + \|\bu_\Gamma\|_{1/2,\Gamma} + \|\fb\|_{0,\Omega}   \big) \leq \dfrac{1}{2},
\end{equation}
with $\tilde{\gamma}_1$ and $\tilde{\gamma}^*_1$ being the constants in \eqref{eq:def-hat-gamma} and \eqref{eq:def-hat-gamma-*}. Assume that the hypotheses of Theorem~\ref{theorem:unique-solution-weak2} and Theorem~\ref{theorem:unique-solution-weak2-h} hold. Let $((\ubd,\bsigma),\ubu)\in (\widetilde{\bX}\times\widetilde{\bY})\times\widetilde{\bZ}$ and $((\ubd_h,\bsigma_h),\ubu_h)\in (\widetilde{\bX}_h\times\widetilde{\bY}_h)\times\widetilde{\bZ}_h$ be the unique solutions of \eqref{eq:weak2} and \eqref{eq:weak2-h}, respectively.
Then, there exists $\tilde{C}_{\text{C\'ea}}>0$, such that
\begin{equation}\label{eq:cea-estimate-2}
\ds \|((\ubd,\bsigma),\bu)-((\ubd_h,\bsigma_h),\bu_h)\| \leq \tilde{C}_{\text{C\'ea}} \inf_{((\ube_h,\btau_h),\ubv_h)\in 
 (\widetilde{\bX}_h\times\widetilde{\bY}_h)\times\widetilde{\bZ}_h} \|((\ubd,\bsigma),\ubu)-((\ube_h,\btau_h),\ubv_h)\|.
\end{equation}
\end{theorem}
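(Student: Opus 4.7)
The plan is to mirror verbatim the argument of Theorem~\ref{th:cea-estimate}, replacing each ingredient with its weak-symmetry counterpart. Let $((\ubd,\bsigma),\ubu)$ and $((\ubd_h,\bsigma_h),\ubu_h)$ be the continuous and discrete solutions of \eqref{eq:weak2} and \eqref{eq:weak2-h}, respectively. Following \eqref{eq:decompositions}, I would fix arbitrary $(\wh{\ube}_h,\wh{\btau}_h)\in\widetilde{\bX}_h\times\widetilde{\bY}_h$ and $\wh{\ubv}_h\in\widetilde{\bZ}_h$ and write $\texttt{e}_{\ubd}=\bxi_{\ubd}+\bchi_{\ubd}$, $\texttt{e}_{\bsigma}=\bxi_{\bsigma}+\bchi_{\bsigma}$, $\texttt{e}_{\ubu}=\bxi_{\ubu}+\bchi_{\ubu}$, where the $\bxi$-parts measure the distance from the continuous solution to the arbitrary discrete element, and the $\bchi$-parts are fully discrete.

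Next I would subtract \eqref{eq:weak2-h} from \eqref{eq:weak2}, adding and subtracting $\tilde{a}_{\ubd_h}(\ubd,\cdot)$ in order to replace $\tilde{a}(\ubd,\cdot)-\tilde{a}(\ubd_h,\cdot)$ by the linearised form $\tilde{A}_{\ubd_h}$ applied to $(\texttt{e}_{\ubd},\texttt{e}_{\bsigma})$ plus a consistency term driven by the permeability difference $\kappa(\ubd)-\kappa(\ubd_h)$. This yields the exact analogue of \eqref{eq:auxiliar-equation}: for every test triple in the discrete spaces,
\[
\tilde{A}_{\ubd_h}((\bchi_{\ubd},\bchi_{\bsigma}),(\ube_h,\btau_h))+\tilde{b}_2((\ube_h,\btau_h),\bchi_{\ubu})+\tilde{b}_2((\bchi_{\ubd},\bchi_{\bsigma}),\ubv_h)
\]
equals a right-hand side involving the $\bxi$-components and $\int_\Omega (\kappa(\ubd_h)-\kappa(\ubd))\nabla p\cdot\nabla q_h$. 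I would then apply the discrete inf-sup in the style of \eqref{eq:J-from-W-to-W-h}, available for the weak-symmetry spaces via \eqref{eq:infsup-hat-b2-h} and \eqref{eq:infsup-hat-b1-h}, together with the continuity bounds \eqref{eq:bounded-hat-b1-b2} and \eqref{eq:bounded-a}, and the Lipschitz property of $\kappa$ in \eqref{prop-kappa}, to obtain
\[
\|\bchi_{\ubd}\|_{\widetilde{\bX}}+\|\bchi_{\bsigma}\|_{\widetilde{\bY}}+\|\bchi_{\ubu}\|_{\widetilde{\bZ}}\le \tilde{\gamma}_1^{*}\bigl((C_a+2)(\|\bxi_{\ubd}\|_{\widetilde{\bX}}+\|\bxi_{\bsigma}\|_{\widetilde{\bY}}+\|\bxi_{\ubu}\|_{\widetilde{\bZ}})+\kappa_2(\|\bxi_{\ubd}\|_{\widetilde{\bX}}+\|\bchi_{\ubd}\|_{\widetilde{\bX}})\|\ubd\|_{\widetilde{\bX}}\bigr),
\]
which is the direct analogue of \eqref{eq:auxiliar-equation-2}.

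To close the argument I would use the continuous stability bound \eqref{eq:stability-weak2} for $\|\ubd\|_{\widetilde{\bX}}$ together with the smallness hypothesis \eqref{eq:assumption-cea-2}, which yields $\kappa_2\tilde{\gamma}_1^{*}\|\ubd\|_{\widetilde{\bX}}\le \tfrac12$. This allows the term $\kappa_2\tilde{\gamma}_1^{*}\|\ubd\|_{\widetilde{\bX}}\|\bchi_{\ubd}\|_{\widetilde{\bX}}$ on the right to be absorbed into the left-hand side, producing $\|\bchi_{\ubd}\|_{\widetilde{\bX}}+\|\bchi_{\bsigma}\|_{\widetilde{\bY}}+\|\bchi_{\ubu}\|_{\widetilde{\bZ}}\lesssim \|\bxi_{\ubd}\|_{\widetilde{\bX}}+\|\bxi_{\bsigma}\|_{\widetilde{\bY}}+\|\bxi_{\ubu}\|_{\widetilde{\bZ}}$, with a constant independent of $h$. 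A final triangle inequality on each error component and taking the infimum over all admissible $(\wh{\ube}_h,\wh{\btau}_h,\wh{\ubv}_h)$ delivers \eqref{eq:cea-estimate-2}.

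The main obstacle is purely bookkeeping: verifying that every identity used in the strong-symmetry proof transfers to the weak-symmetry setting. In particular I must check that the discrete inf-sup constant $\tilde{\gamma}_1^{*}$ for the block $\tilde{A}_{\ubd_h}+\tilde{b}_2$ (analogue of \eqref{eq:J-from-W-to-W-h}) is indeed available uniformly for $\ubd_h\in\widetilde{\bW}_h$, which is guaranteed by Lemma~\ref{lem:well-def-hat-J-h} and the stability properties collected in \eqref{eq:infsup-hat-b2-h}--\eqref{eq:infsup-hat-b1-h}. All other steps are literal translations of the proof of Theorem~\ref{th:cea-estimate}, so no new technical difficulty arises.
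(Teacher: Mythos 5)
Your proposal is correct and reproduces precisely the argument the paper intends: the paper leaves Theorem~\ref{th:cea-estimate-2} without an explicit proof, remarking only that one should repeat the steps of Section~\ref{sec:preliminaries} and Theorem~\ref{th:cea-estimate} in the weak-symmetry setting, and your blind reconstruction carries that out faithfully — error splitting, consistency manipulation via $\tilde{A}_{\ubd_h}$, discrete inf-sup with constant $\tilde{\gamma}_1^*$, Lipschitz continuity of $\kappa$, absorption using the bound $\kappa_2\tilde{\gamma}_1^*\|\ubd\|_{\widetilde{\bX}}\le\tfrac12$ coming from \eqref{eq:assumption-cea-2} and the analogue of \eqref{eq:J-from-W-to-W}, then triangle inequality and infimum. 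The only cosmetic slip is invoking \eqref{eq:stability-weak2} (which carries an unspecified $\lesssim$-constant) where the sharper $\tilde{\gamma}_1$-bound from the analogue of \eqref{eq:J-from-W-to-W} is what actually makes the constant in \eqref{eq:assumption-cea-2} match; this is exactly the bound the paper uses in the strong-symmetry case, so your reasoning is sound once that reference is tightened.
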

\begin{remark}
Using the estimates \eqref{eq:auxiliar-equation-4} and \eqref{eq:J-from-W-to-W}  together with the assumption \eqref{eq:assumption-cea}, we can obtain the following estimate for the C\'ea's constant $C_{\text{C\'ea}} = 2(\gamma^*_1 (C_a + 2) +1)$. Analogously, for the case of weakly imposed symmetry we can arrive at $\tilde{C}_{\text{C\'ea}} = 2(\tilde{\gamma}^*_1 (C_a + 2) +1)$. 
\end{remark}


\subsection{Rates of convergence}
In order to establish the rate of convergence of the Galerkin schemes \eqref{eq:weak1-h} and \eqref{eq:weak2-h}, we first recall the following approximation properties $\mathbf{AP}$ and $\widetilde{\mathbf{AP}}$ (interpolation estimates of Sobolev spaces for the twofold saddle-points for the poroelastic stress symmetry imposed strongly and weakly, respectively) associated with the finite element spaces  specified in Section~\ref{sec:spaces}.
\medskip

\begin{subequations}
\noindent $(\mathbf{AP}^{\bd}_h)$ For each $1\leq m \leq k+2$ and for each $\be\in \bbH^{m}(\Omega)\cap\bbLsym$, there holds
\begin{equation}\label{eq:rate-d}
\dist\big(\be,\bX_{1,h}\big) := \inf_{\be_h \in \bX_{1,h}} \|\be - \be_h\|_{0,\Omega} \lesssim h^{m}\,\|\be\|_{m,\Omega}.
\end{equation}

\noindent $(\mathbf{AP}^{p}_h)$ For each $0\leq m \leq k+1$ and for each $q\in \rH^{m+1}(\Omega)$, there holds
\begin{equation}\label{eq:rate-p}
\dist\big(q,\bX_{2,h}\big) := \inf_{q_h \in \bX_{2,h}} \|q - q_h\|_{1,\Omega} \lesssim h^{m}\,\|q\|_{m+1,\Omega}.
\end{equation}

\noindent $(\mathbf{AP}^{\bsigma}_h)$ For each $1\leq m \leq k+1$ and for each $\btau\in \bbH^{m}(\Omega)\cap \bbHsym$ with $\bdiv\,\btau\in \bH^{m}(\Omega)$, there holds
\begin{equation}\label{eq:rate-sigma}
\dist\big(\btau,\bY_h\big) := \inf_{\btau_h \in \bY_h} \|\btau - \btau_h\|_{\bdiv,\Omega} \lesssim h^{m}\,\Big\{ \|\btau\|_{m,\Omega} + \|\bdiv\,\btau\|_{m,\Omega} \Big\}.
\end{equation}

\noindent $(\mathbf{AP}^{\bu}_h)$ For each $1\leq m \leq k+1$ and for each $\bv\in \bH^{m+1}(\Omega)$, there holds
\begin{equation}\label{eq:rate-u}
\dist\big(\bv,\bZ_h\big) := \inf_{\bv_h \in \bZ_h} \|\bv - \bv_h\|_{0,\Omega} \lesssim  h^{m}\,\|\bv\|_{m+1,\Omega}.
\end{equation}
\end{subequations}

\noindent For \eqref{eq:rate-d}, \eqref{eq:rate-sigma} and \eqref{eq:rate-u} we refer to \cite[Theorem 6.1]{arnold-winther-0}, whereas  \eqref{eq:rate-p} can be found in \cite[Corollary 1.128]{ernguermond}.
On the other hand, regarding the formulations with weakly imposed stress symmetry, we have the following approximation bounds: 

\begin{subequations}
\noindent $(\widetilde{\mathbf{AP}}^{\bd}_h)$ For each $0\leq m \leq k+1$ and for each $\be\in \bbH^{m}(\Omega)$, there holds
\begin{equation}\label{eq:rate-d-2}
\dist\big(\be,\widetilde{\bX}_{1,h}\big) := \inf_{\be_h \in \widetilde{\bX}_{1,h}} \|\be - \be_h\|_{0,\Omega} \lesssim h^{m}\,\|\be\|_{m,\Omega}.
\end{equation}

\noindent $(\widetilde{\mathbf{AP}}^{p}_h)$ Coincides with  $({\mathbf{AP}}^{p}_h)$.

\noindent $(\widetilde{\mathbf{AP}}^{\bsigma}_h)$ For each $0 < m \leq k+1$ and for each $\btau\in \bbH^{m}(\Omega)\cap \bbH(\bdiv;\Omega)$ with $\bdiv\,\btau\in \bH^{m}(\Omega)$, there holds
\begin{equation}\label{eq:rate-sigma-2}
\dist\big(\btau,\widetilde{\bY}_h\big) := \inf_{\btau_h \in \widetilde{\bY}_h} \|\btau - \btau_h\|_{\widetilde{Y}} \lesssim h^{m}\,\Big\{ \|\btau\|_{m,\Omega} + \|\bdiv\,\btau\|_{m,\Omega} \Big\}.
\end{equation}

\noindent $(\widetilde{\mathbf{AP}}^{\bu}_h)$ For each $0\leq m \leq k+1$ and for each $\bv\in \bH^{m}(\Omega)$, there holds
\begin{equation}\label{eq:rate-u-2}
\dist\big(\bv,\widetilde{\bZ}_{1,h}\big) := \inf_{\bv_h \in \widetilde{\bZ}_{1,h}} \|\bv - \bv_h\|_{0,\Omega} \lesssim  h^{m}\,\|\bv\|_{m,\Omega}.
\end{equation}

\noindent $(\widetilde{\mathbf{AP}}^{\bgamma}_h)$ For each $0\leq m \leq k+1$ and for each $\bbeta\in \bbH^{m}(\Omega)\cap \bbL^2_{\rskew}(\Omega)$, there holds
\begin{equation}\label{eq:rate-gamma}
\dist\big(\bbeta,\widetilde{\bZ}_{2,h}\big) := \inf_{\bbeta_h \in \widetilde{\bZ}_{2,h}} \|\bbeta - \bbeta_h\|_{0,\Omega} \lesssim  h^{m}\,\|\bbeta\|_{m,\Omega}.
\end{equation}
\end{subequations}
For \eqref{eq:rate-d-2}, \eqref{eq:rate-sigma-2}, \eqref{eq:rate-u-2} and \eqref{eq:rate-gamma} we refer to \cite[Theorem 2.4]{gatica2013priori}.

With these steps we are now in position to state the rates of convergence associated with the Galerkin schemes \eqref{eq:weak1-h} and \eqref{eq:weak2-h}.

\begin{theorem}\label{th:rate1}
Assume that the hypotheses of Theorem~\ref{th:cea-estimate} hold and let
$((\ubd,\bsigma),\bu)\in (\bX\times\bY)\times\bZ$ and $((\ubd_h,\bsigma_h),\bu_h)\in (\bX_h\times\bY_h)\times\bZ_h$ be the unique solutions of the continuous and discrete problems \eqref{eq:weak1} and \eqref{eq:weak1-h}, respectively.
Assume further that $\bd\in \bbH^{m}(\Omega)$, $p\in \rH^{m+1}(\Omega)$, $\bsigma\in \bbH^{m}(\Omega)$, $\bdiv \,\bsigma \in \bH^{m}(\Omega)$ and $\bu \in \bH^{m+1}(\Omega)$, for $1\leq m \leq k+1$.
Then there exists $C_{rate}>0$, independent of $h$, such that 
\begin{equation*}
\ds\|((\texttt{e}_{\ubd},\texttt{e}_{\bsigma}),\texttt{e}_{\bu})\|
\,\leq\,
C_{rate}\,h^{m}\Big\{ \|\bd\|_{m,\Omega} \,+\, \|p\|_{m+1,\Omega}
\,+\,\|\bsigma\|_{m,\Omega} \,+\, \|\bdiv\,\bsigma\|_{m,\Omega} \,+\, \|\bu\|_{m+1,\Omega}\Big\}.
\end{equation*}
\end{theorem}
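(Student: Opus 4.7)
My plan is to combine the C\'ea-type estimate of Theorem~\ref{th:cea-estimate} with the approximation properties $(\mathbf{AP}^{\bd}_h)$--$(\mathbf{AP}^{\bu}_h)$ component by component. Since the norm on $(\bX\times\bY)\times\bZ$ decouples additively over the fields $\bd$, $p$, $\bsigma$ and $\bu$, the infimum appearing on the right-hand side of \eqref{eq:cea-estimate} is bounded above by the sum of the four distances $\dist(\bd,\bX_{1,h})$, $\dist(p,\bX_{2,h})$, $\dist(\bsigma,\bY_h)$ and $\dist(\bu,\bZ_h)$, each of which is taken in its own space norm. Choosing in \eqref{eq:cea-estimate} any quadruple $(\wh{\be}_h,\wh{q}_h,\wh{\btau}_h,\wh{\bv}_h)$ built from independent best approximations yields this upper bound.

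Next, I would invoke the hypotheses $\bd\in\bbH^m(\Omega)$, $p\in\rH^{m+1}(\Omega)$, $\bsigma\in\bbH^m(\Omega)$ with $\bdiv\,\bsigma\in\bH^m(\Omega)$, and $\bu\in\bH^{m+1}(\Omega)$, with $1\leq m\leq k+1$. This range is chosen precisely so that all four approximation estimates \eqref{eq:rate-d}--\eqref{eq:rate-u} apply simultaneously (note that $(\mathbf{AP}^{\bd}_h)$ admits $m$ up to $k+2$ but the other components only up to $k+1$, so the overall rate is governed by $m\leq k+1$). Substituting these bounds into the sum produces
\begin{equation*}
\inf_{((\ube_h,\btau_h),\bv_h)} \|((\ubd,\bsigma),\bu)-((\ube_h,\btau_h),\bv_h)\| \lesssim h^{m}\Bigl\{ \|\bd\|_{m,\Omega}+\|p\|_{m+1,\Omega}+\|\bsigma\|_{m,\Omega}+\|\bdiv\,\bsigma\|_{m,\Omega}+\|\bu\|_{m+1,\Omega}\Bigr\}.
\end{equation*}

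Finally, combining this with \eqref{eq:cea-estimate} yields the claimed estimate with $C_{rate}$ proportional to the C\'ea constant $C_{\text{C\'ea}}$ times the hidden constants in the interpolation estimates. There is no real obstacle here beyond careful bookkeeping: the only point that requires attention is making sure that the approximation property for $\bd$ is compatible with the symmetric Arnold--Winther space $\bX_{1,h}$ (as noted in \eqref{eq:X-1-h}, which coincides with $\bY_h$ viewed as a tensor space without divergence constraint) and that the norm on $\bX=\bbLsym\times\rH^1(\Omega)$ only requires the $L^2$-bound on $\bd$ and the full $\rH^1$-bound on $p$. All constants are independent of $h$ because the hypotheses of Theorem~\ref{th:cea-estimate} are data-only (smallness conditions independent of the mesh).
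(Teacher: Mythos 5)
Your proposal is correct and follows exactly the paper's own (one-line) proof: apply the C\'ea estimate of Theorem~\ref{th:cea-estimate} and then bound the best-approximation error componentwise by the interpolation estimates $(\mathbf{AP}^{\bd}_h)$--$(\mathbf{AP}^{\bu}_h)$. The only cosmetic imprecision is the parenthetical claim that $\bX_{1,h}$ "coincides with $\bY_h$ without divergence constraint" --- more precisely, $\bX_{1,h}$ uses the same local Arnold--Winther shape functions but drops the global $\bbHsym$ conformity (it sits in $\bbLsym$, not $\bbH(\bdiv)$) --- but this does not affect the argument.
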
 
\begin{proof}
The result is a straightforward application of Theorem~\ref{th:cea-estimate}
and the approximation properties $(\mathbf{AP}^{\bd}_h)$, $(\mathbf{AP}^{p}_h)$, $(\mathbf{AP}^{\bsigma}_h)$, and $(\mathbf{AP}^{\bu}_h)$.
\end{proof}	

\begin{theorem}\label{th:rate2}
Assume that the hypotheses of Theorem~\ref{th:cea-estimate-2} hold and let
$((\ubd,\bsigma),\ubu)\in \widetilde{\bX}\times\widetilde{\bY}\times\widetilde{\bZ}$ and $((\ubd_h,\bsigma_h),\ubu_h)\in (\widetilde{\bX}_h\times\widetilde{\bY}_h)\times\widetilde{\bZ}_h$ be the unique solutions of the continuous and discrete problems \eqref{eq:weak2} and \eqref{eq:weak2-h}, respectively.
Assume further that $\bd\in \bbH^{m}(\Omega)$, $p\in \rH^{m+1}(\Omega)$, $\bsigma\in \bbH^{m}(\Omega)$, $\bdiv\, \bsigma \in \bH^{m}(\Omega)$, $\bu \in \bH^{m}(\Omega)$ and $\bgamma\in \bbH^{m}(\Omega)$, for $0\leq m \leq k+1$.
Then there exists $\widetilde{C}_{rate}>0$, independent of $h$, such that 
\begin{equation*}
\ds\|((\texttt{e}_{\ubd},\texttt{e}_{\bsigma}),\texttt{e}_{\ubu})\|
\,\leq\,
\widetilde{C}_{rate}\,h^{m}\Big\{ \|\bd\|_{m,\Omega} \,+\, \|p\|_{m+1,\Omega}
\,+\,\|\bsigma\|_{m,\Omega} \,+\, \|\bdiv\,\bsigma\|_{m,\Omega} \,+\, \|\bu\|_{m,\Omega} \,+\,  \|\bgamma\|_{m,\Omega} \Big\}.
\end{equation*}
\end{theorem}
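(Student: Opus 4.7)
The plan is to mirror the short derivation of Theorem~\ref{th:rate1}: invoke the Céa-type quasi-optimality bound \eqref{eq:cea-estimate-2} of Theorem~\ref{th:cea-estimate-2}, and then majorise the right-hand side by a sum of distances of each exact solution component from the corresponding discrete subspace. Because the norm on $(\widetilde{\bX}\times\widetilde{\bY})\times\widetilde{\bZ}$ is, by construction, the Pythagorean combination of $\|\be\|_{0,\Omega}$, $\|q\|_{1,\Omega}$, $\|\btau\|_{\bdiv,\Omega}$, $\|\bv\|_{0,\Omega}$ and $\|\bbeta\|_{0,\Omega}$, and since the discrete space is a product space $\widetilde{\bX}_{1,h}\times\bX_{2,h}\times\widetilde{\bY}_h\times\widetilde{\bZ}_{1,h}\times\widetilde{\bZ}_{2,h}$, the infimum on the right-hand side of \eqref{eq:cea-estimate-2} splits into the five independent component-wise distances
\[
\dist\big(\bd,\widetilde{\bX}_{1,h}\big),\ \dist\big(p,\bX_{2,h}\big),\ \dist\big(\bsigma,\widetilde{\bY}_h\big),\ \dist\big(\bu,\widetilde{\bZ}_{1,h}\big),\ \dist\big(\bgamma,\widetilde{\bZ}_{2,h}\big).
\]

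Next I would apply, term by term, the approximation estimates $(\widetilde{\mathbf{AP}}^{\bd}_h)$, $(\widetilde{\mathbf{AP}}^{p}_h)$, $(\widetilde{\mathbf{AP}}^{\bsigma}_h)$, $(\widetilde{\mathbf{AP}}^{\bu}_h)$ and $(\widetilde{\mathbf{AP}}^{\bgamma}_h)$, that is \eqref{eq:rate-d-2}, \eqref{eq:rate-p}, \eqref{eq:rate-sigma-2}, \eqref{eq:rate-u-2} and \eqref{eq:rate-gamma}, under the regularity hypotheses $\bd\in\bbH^{m}(\Omega)$, $p\in\rH^{m+1}(\Omega)$, $\bsigma\in\bbH^{m}(\Omega)$ with $\bdiv\,\bsigma\in\bH^{m}(\Omega)$, $\bu\in\bH^{m}(\Omega)$ and $\bgamma\in\bbH^{m}(\Omega)$. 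Each of these contributes an $h^{m}$ factor times the corresponding Sobolev norm, and the hypothesis $0\leq m\leq k+1$ is exactly the common range permitted by the five approximation bounds. The constant $\widetilde{C}_{rate}$ is then obtained by multiplying $\tilde{C}_{\text{C\'ea}}$ with the hidden constants of the interpolation estimates.

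There is essentially no genuine obstacle: the quasi-optimality has already absorbed the nonlinearity-induced perturbation (through the smallness assumption \eqref{eq:assumption-cea-2} used in Theorem~\ref{th:cea-estimate-2}), and the fact that $(\widetilde{\mathbf{AP}}^{\bsigma}_h)$ applies to tensors in $\bbH(\bdiv;\Omega)$ rather than in $\bbHsym$ is consistent with the weak-symmetry setting in which stress is only sought in $\widetilde{\bY}=\bbH(\bdiv;\Omega)$. The only care to take is ensuring that the approximants chosen to realise the component-wise distances can be combined into a single test tuple in the product space; this is immediate because of the Cartesian structure, so the bound follows with one application of the triangle inequality. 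The final expression is then the stated $h^{m}$ estimate with the five Sobolev norms on the right-hand side.
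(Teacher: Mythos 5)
Your proposal is correct and follows exactly the same route as the paper, which simply states that the result is a straightforward application of the C\'ea estimate of Theorem~\ref{th:cea-estimate-2} together with the approximation properties $(\widetilde{\mathbf{AP}}^{\bd}_h)$--$(\widetilde{\mathbf{AP}}^{\bgamma}_h)$; your elaboration of the component-wise splitting of the infimum over the Cartesian product space is precisely the unstated mechanism behind that one-line proof.
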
 
\begin{proof}
The result is a straightforward application of Theorem~\ref{th:cea-estimate-2}
and the approximation properties $(\widetilde{\mathbf{AP}}^{\bd}_h)$, $(\widetilde{\mathbf{AP}}^{p}_h)$, $(\widetilde{\mathbf{AP}}^{\bsigma}_h)$, $(\widetilde{\mathbf{AP}}^{\bu}_h)$ and $(\widetilde{\mathbf{AP}}^{\bgamma}_h)$.
\end{proof}



\section{Numerical results}\label{sec:numer}
This section contains selected computational examples that serve to confirm the theoretically obtained convergence rates of the two analysed mixed finite element formulations. We showcase tests in 2D and 3D including experimental convergence as well as an application-oriented simulation of filtration of interstitial fluid in soft tissue. The implementation has been carried out using the finite element library \texttt{Firedrake} \cite{rathgeber2016firedrake}. Each solve of the discrete nonlinear coupled system was performed using Newton--Raphson's method with iterations terminated whenever the absolute or the relative $\ell^\infty-$norm of the discrete residual in the product space drops below the fixed tolerance $10^{-7}$. Each linear system arising from linearisation was solved using the sparse LU factorisation algorithm  MUMPS. 

\subsection{Verification of convergence with respect to smooth solutions}

\begin{table}[t!]
	\setlength{\tabcolsep}{4.5pt}
	\begin{center}  
\begin{tabular}{|cccccccccc|}
	\hline 	
	DoFs & $h$ & $\texttt{e}_0(\bd)$ & \texttt{rate} & $\texttt{e}_1({p})$ & \texttt{rate}  & $\texttt{e}_{\bdiv}(\bsigma)$ & \texttt{rate} & $\texttt{e}_0(\bu)$ & rate\\ 
	\hline
 		\hline
   303 & 0.5000 & 1.5e-03 & $\star$ & 3.8e-01 & $\star$ & 8.4e-02 & $\star$ & 6.3e-03 & $\star$ \\
  1063 & 0.2500 & 3.1e-04 & 2.24 & 1.2e-01 & 1.70 & 2.2e-02 & 1.92 & 1.6e-03 & 2.01 \\
  3975 & 0.1250 & 4.9e-05 & 2.68 & 3.2e-02 & 1.88 & 5.6e-03 & 1.98 & 3.9e-04 & 2.00 \\
 15367 & 0.0625 & 7.2e-06 & 2.76 & 8.2e-03 & 1.95 & 1.4e-03 & 2.00 & 9.8e-05 & 2.00 \\
 60423 & 0.0312 & 1.1e-06 & 2.72 & 2.1e-03 & 1.98 & 3.5e-04 & 2.00 & 2.5e-05 & 2.00 \\
239623 & 0.0156 & 2.3e-07 & 2.03 & 5.3e-04 & 1.98 & 8.7e-05 & 2.00 & 6.1e-06 & 2.00 \\
\hline
		\end{tabular}
		\end{center}
		
		\smallskip
		\caption{Verification of convergence for the method imposing stress symmetry strongly and using   AW$_{k}$-based elements (using $k=1$: this gives polynomial degree
  3 for stress and strain and 1  for displacement). Errors and convergence rates are tabulated for strain, fluid pressure, poroelastic stress, and displacement.  The symbol $\star$ in the first mesh refinement level indicates that no convergence rate is computed.}\label{table:1h}
	\end{table}

\begin{table}[t!]
	\setlength{\tabcolsep}{4.5pt}
	\begin{center}  
	\begin{tabular}{|cccccccccccc|}
	\hline 	
	DoFs & $h$ & $\texttt{e}_0(\bd)$ & \texttt{rate} & $\texttt{e}_1({p})$ & \texttt{rate}  & $\texttt{e}_{\bdiv}(\bsigma)$ & \texttt{rate} & $\texttt{e}_0(\bu)$ & \texttt{rate} & $\texttt{e}_0(\bgamma)$ & \texttt{rate} \\ 
	\hline
	\hline
\multicolumn{12}{|c|}{PEERS$_k$-based FE scheme with $k=0$}\\
				\hline
      130 & 0.7071 & 1.5e-01 & $\star$ & 1.1e+0 & $\star$ & 1.2e+0 & $\star$ & 4.6e-02 & $\star$ & 8.4e-02 & $\star$ \\
   482 & 0.3536 & 9.4e-02 & 0.66 & 7.4e-01 & 0.62 & 6.3e-01 & 0.90 & 2.3e-02 & 0.99 & 4.9e-02 & 0.79\\
  1858 & 0.1768 & 5.2e-02 & 0.86 & 4.1e-01 & 0.85 & 3.2e-01 & 0.98 & 1.1e-02 & 1.02 & 2.7e-02 & 0.83\\
  7298 & 0.0884 & 2.6e-02 & 0.97 & 2.1e-01 & 0.95 & 1.6e-01 & 1.00 & 5.6e-03 & 1.01 & 1.1e-02 & 1.26\\
 28930 & 0.0442 & 1.3e-02 & 1.00 & 1.1e-01 & 0.98 & 8.0e-02 & 1.00 & 2.8e-03 & 1.00 & 4.3e-03 & 1.42\\
115202 & 0.0221 & 6.6e-03 & 1.00 & 5.4e-02 & 1.00 & 4.0e-02 & 1.00 & 1.4e-03 & 1.00 & 1.5e-03 & 1.48\\
			\hline
\multicolumn{12}{|c|}{PEERS$_k$-based FE scheme with $k=1$}\\
				\hline	
       386 & 0.7071 & 2.3e-02 & $\star$ & 3.8e-01 & $\star$ & 3.4e-01 & $\star$ & 5.0e-03 & $\star$ & 9.5e-03 & $\star$\\
  1474 & 0.3536 & 8.1e-03 & 1.51 & 1.2e-01 & 1.70 & 9.0e-02 & 1.91 & 1.2e-03 & 2.02 & 3.1e-03 & 1.63\\
  5762 & 0.1768 & 2.6e-03 & 1.66 & 3.2e-02 & 1.88 & 2.3e-02 & 1.97 & 3.0e-04 & 2.02 & 1.4e-03 & 1.64\\
 22786 & 0.0884 & 7.3e-04 & 1.81 & 8.2e-03 & 1.95 & 5.8e-03 & 1.99 & 7.5e-05 & 2.02 & 4.9e-04 & 1.79\\
 90626 & 0.0442 & 1.9e-04 & 1.91 & 2.1e-03 & 1.98 & 1.5e-03 & 1.99 & 1.9e-05 & 2.01 & 1.4e-04 & 1.89 \\
361474 & 0.0221 & 5.0e-05 & 1.96 & 5.2e-04 & 1.99 & 3.7e-04 & 2.00 & 4.7e-06 & 2.00 & 3.8e-05 & 1.98\\
\hline
    \multicolumn{12}{|c|}{AFW$_k$-based FE scheme with $k=0$}\\
				\hline
       161 & 0.7071 & 2.5e-02 & $\star$ & 1.1e+0 & $\star$ & 1.1e+0 & $\star$ & 4.4e-02 & $\star$ & 2.6e-02 & $\star$ \\
   569 & 0.3536 & 1.5e-02 & 0.72 & 7.4e-01 & 0.62 & 5.7e-01 & 0.93 & 2.2e-02 & 0.98 & 1.4e-02 & 0.93\\
  2129 & 0.1768 & 5.9e-03 & 1.35 & 4.1e-01 & 0.85 & 2.9e-01 & 0.99 & 1.1e-02 & 1.00 & 6.3e-03 & 1.09\\
  8225 & 0.0884 & 2.4e-03 & 1.30 & 2.1e-01 & 0.95 & 1.5e-01 & 1.00 & 5.6e-03 & 1.00 & 3.1e-03 & 1.05\\
 32321 & 0.0442 & 1.1e-03 & 1.13 & 1.1e-01 & 0.98 & 7.3e-02 & 1.00 & 2.8e-03 & 1.00 & 1.5e-03 & 1.02\\
128129 & 0.0221 & 5.4e-04 & 1.04 & 5.4e-02 & 1.00 & 3.6e-02 & 1.00 & 1.4e-03 & 1.00 & 7.5e-04 & 1.00\\
			\hline
\multicolumn{12}{|c|}{AFW$_k$-based FE scheme with $k=1$}\\
				\hline	
      385 & 0.7071 & 5.3e-03 & $\star$ & 3.8e-01 & $\star$ & 3.3e-01 & $\star$ & 4.8e-03 & $\star$ & 3.0e-03 & $\star$ \\
  1425 & 0.3536 & 1.3e-03 & 2.05 & 1.2e-01 & 1.70 & 8.7e-02 & 1.92 & 1.2e-03 & 2.00 & 9.7e-04 & 1.65 \\
  5473 & 0.1768 & 2.1e-04 & 2.61 & 3.2e-02 & 1.88 & 2.2e-02 & 1.98 & 3.0e-04 & 2.00 & 1.9e-04 & 2.38\\
 21441 & 0.0884 & 3.4e-05 & 2.62 & 8.2e-03 & 1.95 & 5.5e-03 & 2.00 & 7.5e-05 & 2.00 & 3.8e-05 & 2.30\\
 84865 & 0.0442 & 6.5e-06 & 2.38 & 2.1e-03 & 1.98 & 1.4e-03 & 2.00 & 1.9e-05 & 2.00 & 8.7e-06 & 2.12\\
337665 & 0.0221 & 1.5e-06 & 2.15 & 5.2e-04 & 1.99 & 3.5e-04 & 2.00 & 4.7e-06 & 2.00 & 2.1e-06 & 2.04\\
\hline
		\end{tabular}
		\end{center}
		
		\smallskip
		\caption{Verification of convergence for the method imposing stress symmetry weakly and using PEERS$_{k}$ \eqref{fe:peers} and AFW$_{k}$ \eqref{fe:afw} elements with polynomial degrees $k=0$ and $k=1$. Errors and convergence rates are tabulated for strain, fluid pressure, poroelastic stress, displacement, and rotation Lagrange multiplier. The $\star$ in the first mesh refinement level indicates that no convergence rate is computed.}\label{table:2h}
	\end{table}

First we conduct a test of convergence where model parameters assume the following values  $\mu = \lambda =  \mu_f=1$ and $c_0 =\alpha = \frac{1}{4}$. The nonlinear permeability is taken as the second form in  \eqref{eq:kappa} (the Kozeny--Carman law) with $k_0 = k_1 = 0.1$. We use the following closed-form smooth solutions to the primal form of the coupled nonlinear problem 
\[
	\bu = \frac{1}{5}\begin{pmatrix}
	-x_1\cos(x_1)\sin(x_2)+{x_1^2}\\
	x_1\sin(x_1)\cos(x_2)+{x_2^2}
	\end{pmatrix}, \quad  
	p = \sin(\pi x_1)\sin(\pi x_2),
\]
which are used to generate exact mixed variables $\bd,\bsigma, \bgamma$, and to produce non-homogeneous forcing term $\fb$, boundary data $\bu_\Gamma,r_\Gamma$, and source term $g$. Seven successively refined meshes (congruent right-angled triangular partitions) are generated for the domain $\Omega =(0,1)^2$ and we compute  errors between approximate and exact solutions $e(\cdot)$  (measured in the $\rH^1$-norm for fluid pressure, in the tensor $\bbH(\bdiv)$-norm for poroelastic Cauchy stress, and in the tensorial and vectorial $L^2$-norms for strain, rotation, and displacement).  The experimental rates of convergence at each mesh refinement are computed as 
 \[\texttt{rate} (\%)=\frac{\log( \texttt{e}(\%)/ \texttt{e}'(\%))}{\log(h/h')},\]
 with \% $\in \{\bd,p,\bsigma,\bu,\bgamma\}$, and where $\texttt{e},\texttt{e}'$ stand for errors generated on two consecutive meshes of sizes $h,h'$. 
 The mixed finite element methods are defined by the 
 conforming AW$_{k}$, AFW$_{k}$, and PEERS$_{k}$-type of spaces specified in Section~\ref{sec:FE}.

Tables~\ref{table:1h}-\ref{table:2h} show the error history associated with the formulations with strong and weak symmetry imposition, and using for the latter case the two lowest-order polynomial degrees. In all runs  our results confirm an error decay  with a convergence rate of $O(h^{k+1})$ for all field variables in their natural norms, which is consistent with the theoretical error bounds from Theorems \ref{th:rate1}-\ref{th:rate2}. We also depict examples of approximate solutions computed with the PEERS$_k$-based finite element family (setting $k=1$). See Figure~\ref{fig:ex01}, where the panels show also the outline of the domain  before  the deformation. 

\begin{figure}[t!]
    \centering
    \includegraphics[width=0.3\textwidth]{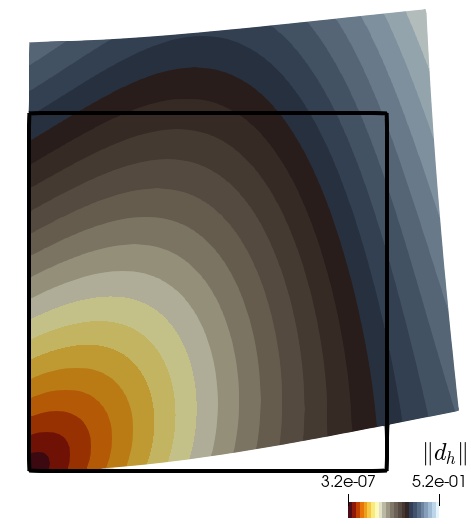}
    \includegraphics[width=0.3\textwidth]{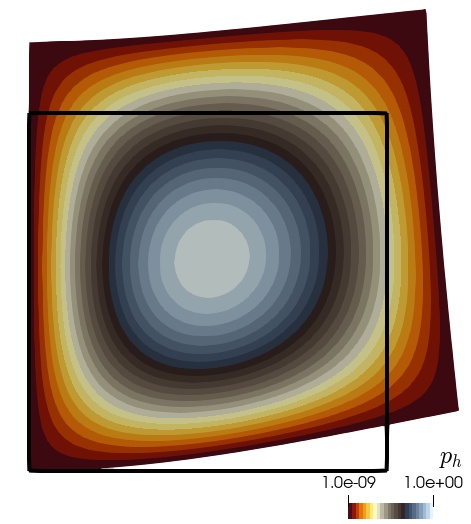}
    \includegraphics[width=0.3\textwidth]{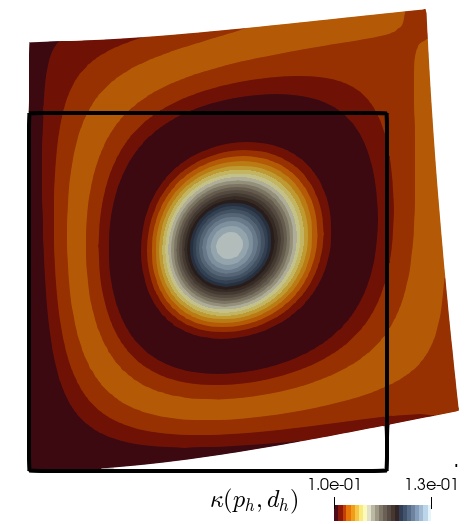}
    \includegraphics[width=0.3\textwidth]{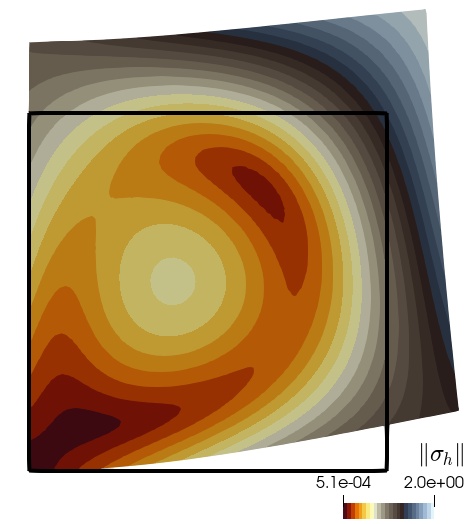}
    \includegraphics[width=0.3\textwidth]{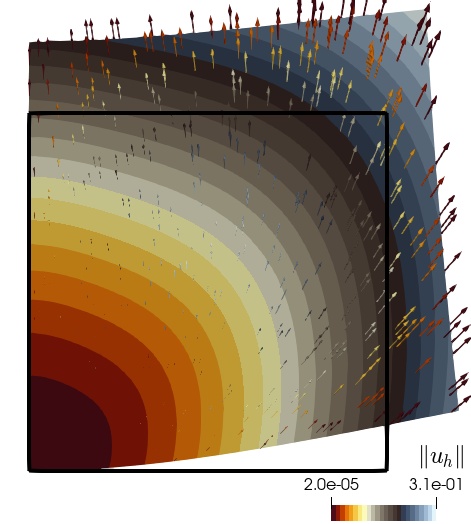}
    \includegraphics[width=0.3\textwidth]{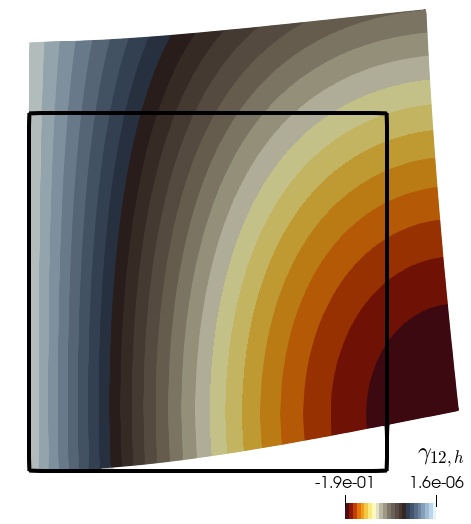}
    \caption{Convergence tests. Approximate solutions computed with the second-order PEERS$_k$-based finite element scheme and rendered on the deformed configuration. Only the non-trivial component of the rotation tensor is shown in the bottom-right panel.}
    \label{fig:ex01}
\end{figure}

\subsection{Simulation of swelling of a porous structure}\label{sec:swelling}
In the next test we replicate the swelling of a 3D block. The parameters and domain configuration are taken similarly to  \cite{oyarzua16} and we simulate this behaviour with the second-order AFW$_k$-based finite element method using \eqref{fe:afw}-\eqref{eq:hat-X-1-h-b}. The domain is $\Omega=(0,1)\times(0,1)\times(0,\frac{1}{2})$ and the deformation is induced by a fluid pressure gradient in the $x_1-$direction (we impose $p=10^4$ at $x_1=0$, $p=0$ at $x_1=1$, and zero-flux conditions  on the remainder of $\Gamma$). The poroelastic body is allowed to slide on the sides $x_1=0$, $x_2=0$ and $x_3=0$ (these parts are called $\Gamma_{\mathrm{slide}}$),  
whereas zero normal stress is considered elsewhere on $\Gamma$. The sliding condition $\bu\cdot\bn|_{\Gamma_{\mathrm{slide}}} = 0$ is incorporated through the additional term 
\[ \langle (\btau\bn)\times \bn, \bu\times \bn \rangle_{\Gamma_{\mathrm{slide}}},\]
on the left-hand side of the second equation in the weak formulation \eqref{eq:weak1} and taking $H(\btau) = 0$. 
The model parameters for this test are the exponential permeability in \eqref{eq:kappa} with $k_0=10^{-9}$, $k_1 = 10^{-6}$, $k_2 = -0.5$, the Young modulus 
$E=8000$, Poisson ratio $\nu=0.3$, storativity coefficient $c_0=0.001$,  Biot--Willis parameter $\alpha=0.5$, fluid viscosity  
$\mu_f = 10^{-3}$, and we take zero body loads and volumetric sources. Figure~\ref{fig:ex02} displays the approximate solutions rendered on the deformed configuration. We also show the contour of the undeformed domain for reference.

\subsection{Poroelastic filtration of slightly compressible trabecular meshwork}
For our next example we consider a computational domain extracted and meshed from imaging of trabecular meshwork tissue in the canine eye in \cite{ruiz22}.  The characteristic length of the domain is $6.8\cdot10^{-3}\,[\text{m}]$. In this test we use the strongly imposed symmetry with AW$_{k}$-based finite elements \eqref{fe:aw}-\eqref{eq:X-1-h}, and take piecewise linear and overall continuous elements for the fluid pressure. 
We set up an initial porosity field $\phi_0$, randomly distributed between 0.3 and 0.45. A nonlinear permeability is prescribed depending on that initial porosity and on fluid pressure and skeleton dilation
\[\kappa(\zeta) = \frac{k_0}{\mu_f} + \frac{k_1}{\mu_f}\exp(-\frac{1}{2}\zeta) \quad \,[\text{in m}^2], \quad \text{and}\quad \zeta = \phi_0+(1-\phi_0)(c_0p+\alpha\tr\bd), \]
where the form for $\zeta$ is similar as in \cite{van2023mathematical}. 
The model parameters for this test are $k_0=10^{-10}$, $k_1 = 10^{-7}$, the Lam\'e constants $\lambda = 14388\,[\text{Pa}]$, $\mu = 1102\,[\text{Pa}]$,  storativity coefficient $c_0=0.05$,  Biot--Willis parameter $\alpha=0.95$, fluid viscosity  
$\mu_f = 7.54^{-4}\, [\text{Pa}\cdot \text{s}]$, and we take zero body loads and volumetric sources. The domain is assumed in contact, on a portion of the boundary on the top-left end, with the anterior chamber in the eye and therefore we set a traction of $\sigma\bn = -3\times 10^{-3}\bn$ and a pore pressure $p=2\times 10^3\,[\text{Pa}]$. On the outlet sub-boundary (a small region on the bottom-right end) we impose zero fluid pressure and traction-free conditions, and on the remainder of the boundary we set zero displacements and zero flux for the fluid pressure. 
From the results portrayed in Figure~\ref{fig:ex03}  we observe that the pore pressure and strain concentration generation near the interfacial region  imply a smaller  permeability, which progressively increases as one approaches the outlet boundary. This behaviour coincides with the first round of tests with different  permeability profiles explored in \cite{ruiz22}. 
We also plot the off-diagonal entries of the Cauchy stress to illustrate the balance of angular momentum, and on the bottom-right panel we can see the deformation of the interfacial region and, as expected, a smaller expansion of the tissue towards the outlet.

\begin{figure}[!t]
    \centering
    \includegraphics[width=0.325\textwidth]{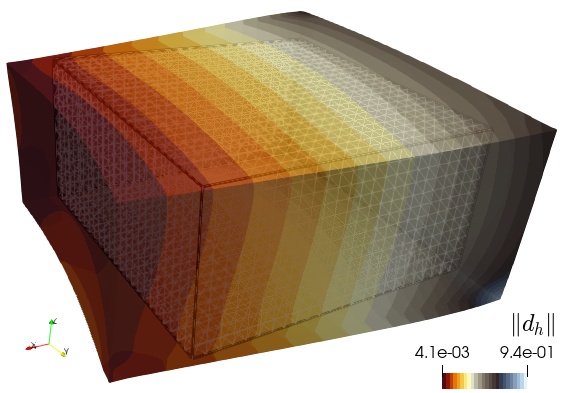}
    \includegraphics[width=0.325\textwidth]{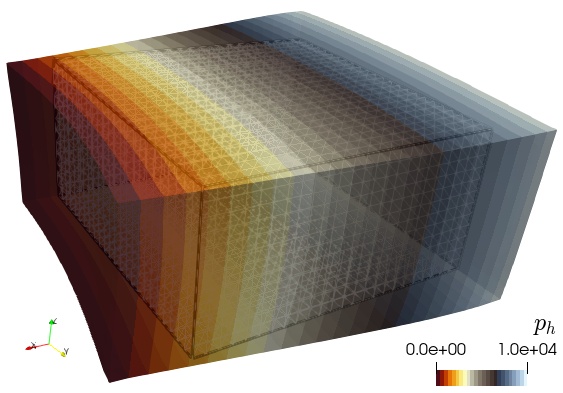}
    \includegraphics[width=0.325\textwidth]{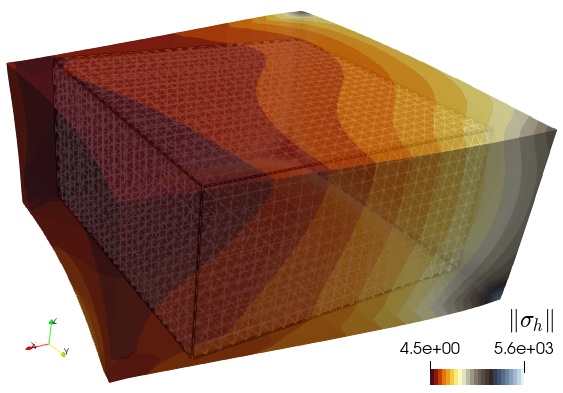}
    \includegraphics[width=0.325\textwidth]{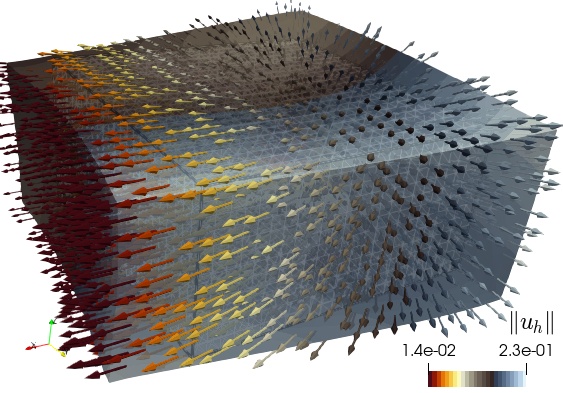}
    \includegraphics[width=0.325\textwidth]{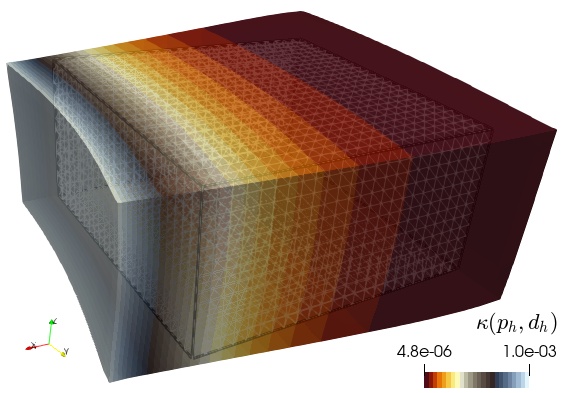}
    \caption{Swelling of a poroelastic block. Approximate infinitesimal strain magnitude, fluid pressure, poroelastic stress magnitude, displacement magnitude and direction arrows, and permeability distribution.}
    \label{fig:ex02}
\end{figure}
 
\subsection{Reproducing the Mandel effect}
To conclude this section, we utilise the proposed formulation, specifically focusing on the scenario of weakly symmetric stress, to simulate Mandel's effect (see, e.g., \cite{mandel1953consolidation} or also  \cite{brun2020monolithic,castelletto2015accuracy, coussy04,fjaer2021petroleum,guo2020multiple,teichtmeister2019aspects}). Such a problem involves a specimen made of isotropic poroelastic material, which is positioned between two rigid frictionless impervious plates at the top and bottom. The slab is infinitely long with a cross section measuring $2L\times 2H$. The lateral sides of the specimen are free and permeable. In this simulation, a compressive force is exerted on the horizontal plates. As a result, the pore pressure at the centre of the specimen surpasses its starting value during the early stages of the process and subsequently diminishes until it reaches zero. This behavior can be attributed to the drainage of fluid from the specimen through the side edges. Consequently, a greater portion of the applied load is transferred towards the comparatively stiffer central region of the specimen.

\begin{figure}[!t]
    \centering
    \includegraphics[width=0.325\textwidth]{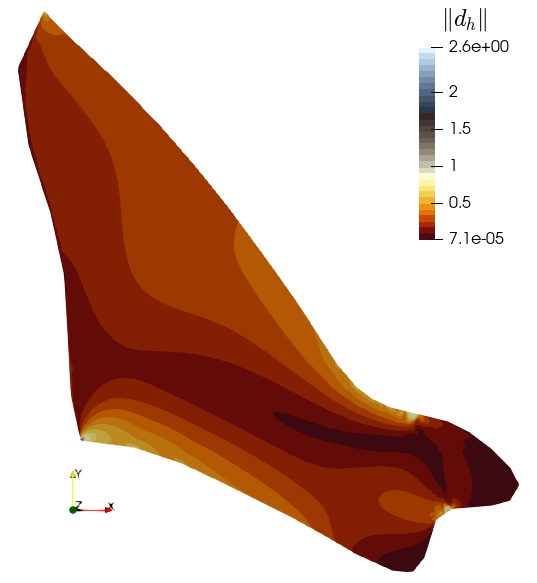}
    \includegraphics[width=0.325\textwidth]{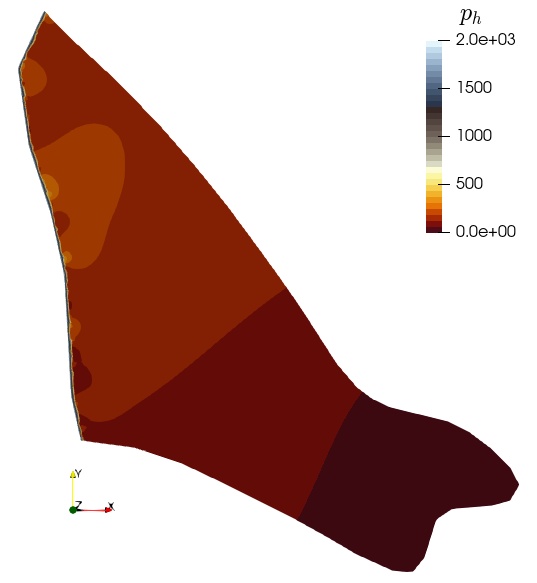}
    \includegraphics[width=0.325\textwidth]{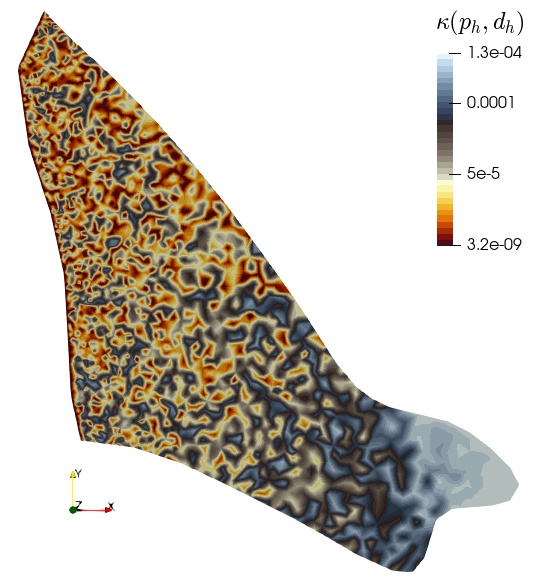}\\
    \includegraphics[width=0.325\textwidth]{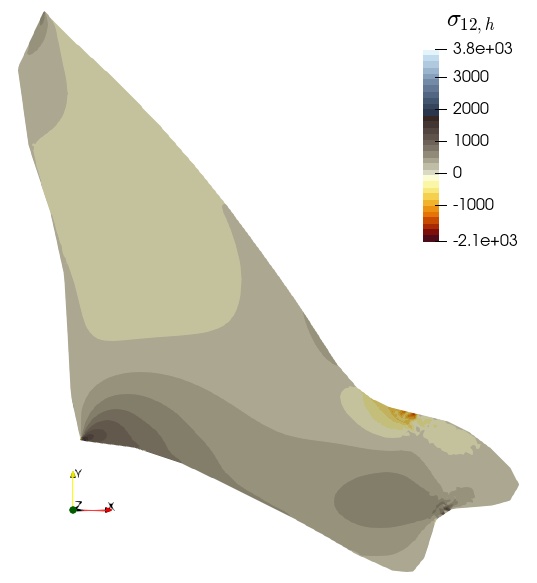}
    \includegraphics[width=0.325\textwidth]{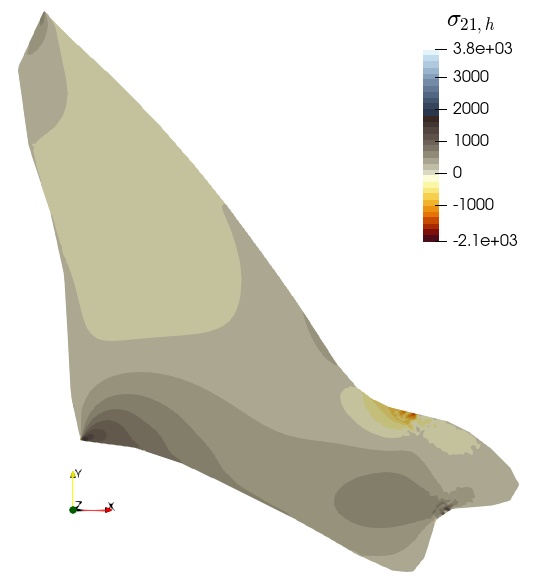}
    \includegraphics[width=0.325\textwidth]{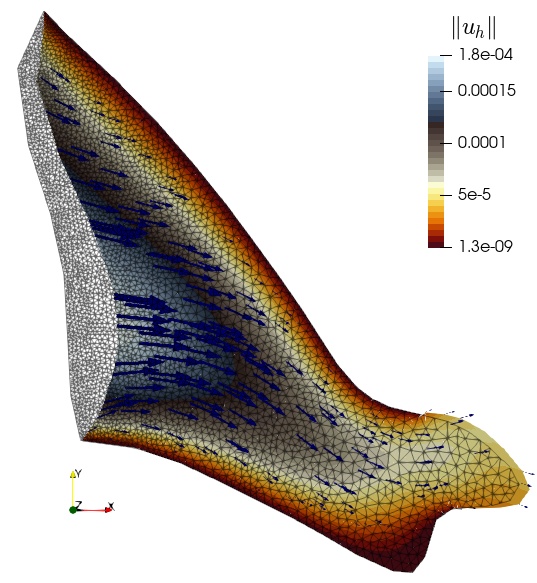}
    \caption{Poroelastic filtration of trabecular meshwork tissue. Approximate infinitesimal strain magnitude, fluid pressure, nonlinear permeability (depending on an initial randomly distributed porosity), two off-diagonal entries of the poroelastic stress, and displacement magnitude with direction arrows.}
    \label{fig:ex03}
\end{figure}

To simplify the analysis, as usual, taking into account the symmetry of the geometry and problem set up, we only consider a quarter of the entire domain: $\Omega = (0,L)\times(0,H)$. This implies that the mechanical boundary conditions for the smaller domain $\Omega$ are as follows: on the boundary $x=L$ the pore pressure is fixed at zero and the normal poroelastic stress is also zero (both imposed as essential boundary conditions). At the boundaries $x=0$ and $y=0$ we impose a sliding condition $\bu\cdot\bn=0$ for which, as in Section~\ref{sec:swelling}, the term $-\langle (\btau\bn) \cdot\bt, \bu \cdot \bt\rangle$ appears on the right-hand side of the weak form (where $\bt$ denotes the tangent vector to the boundary). A downward force of magnitude $2F$ is applied to the top plate (the boundary $y=H$) through the essential boundary condition for poroelastic stress $\bsigma\bn = (0,-F)^{\tt t}$. Zero-flux for the fluid phase is considered in all sub-boundaries except on $x=L$. This problem takes place in the quasi-steady regime and therefore we bring back in the time dependence, discretised using backward Euler's method with constant time step $\Delta t$ and running until $T_{\mathrm{end}}$, and necessitating an initial condition for pore pressure and strain (we initialise them both to zero). A coarse mesh is used together with the AFW$_k$ finite element family with $k=1$,  and following e.g. \cite{li1999nonlinear} (where a thorough computational and experimental comparison is performed for poroelastic cartilage tissue in different regimes), we test the behaviour of the models with constant and nonlinear permeabilities $\kappa=\kappa_0$ and $\kappa = k_0\kappa_0\exp(k_1\zeta)$. 

We set the geometry and model parameters within the ranges used in \cite{fjaer2021petroleum}
\begin{gather*}
L=H=1\,[\mathrm{m}], \quad T_{\mathrm{end}} = 1\,[\mathrm{s}], \quad \Delta t = 0.01\,[\mathrm{s}], \quad 
c_0 = 4\times 10^{-10}\,[\mathrm{Pa}^{-1}], \quad \kappa_0 = 5.1\times10^{-8}\,[\mathrm{m}^2], \\
k_0 = 5\,[-], \quad k_1 = 30\,[-], \quad \alpha = 0.9\,[-], \quad \mu_f = 10^{-3}\,[\mathrm{Pa}\cdot\mathrm{s}], \quad 
\rho = 1\,[\mathrm{Kg}/\mathrm{m}^3], \\ 
E = 10^3\,[\mathrm{Pa}], \quad \nu = \frac{1}{3}\,[-], \quad F = 100\,[\mathrm{Pa}],\quad \fb = \bzero, \quad g = 0.
\end{gather*}
The results of the simulations are collected in Figure~\ref{fig:ex04}, where we plot the profiles of pore pressure, horizontal displacement, principal (axial) components of strain and of poroelastic stress over the horizontal mid-line of the domain (at $y = H/2$). The Mandel effect is clearly visible in the first plot, and the difference (between linear and nonlinear cases) in pore pressure build up is similar as the one observed in \cite{li1999nonlinear}, that is, the nonlinear permeability produces a slightly lower pressure. Figure~\ref{fig:ex04b} shows transients of the main variables over time at two spatial points (the left end of the horizontal mid-line and the top-right corner). Qualitatively the results agree with the expected behaviour in both linear and nonlinear regimes. We observe that the largest variation in the first point occurs for the pore pressure drop, whereas for the second point the largest variation is seen in the axial poroelastic stress. For completeness we also depict the deformed configuration at the final time together with the pore pressure distribution (produced with the constant permeability case). 

\begin{figure}[!t]
    \centering
    \includegraphics[width=0.325\textwidth]{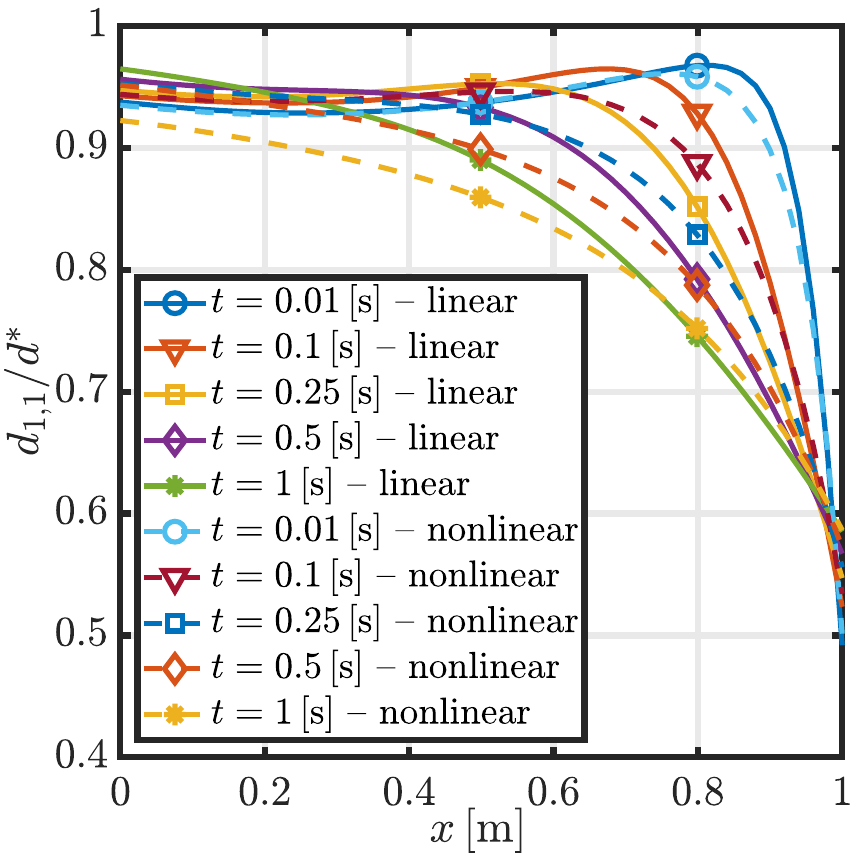}
    \includegraphics[width=0.325\textwidth]{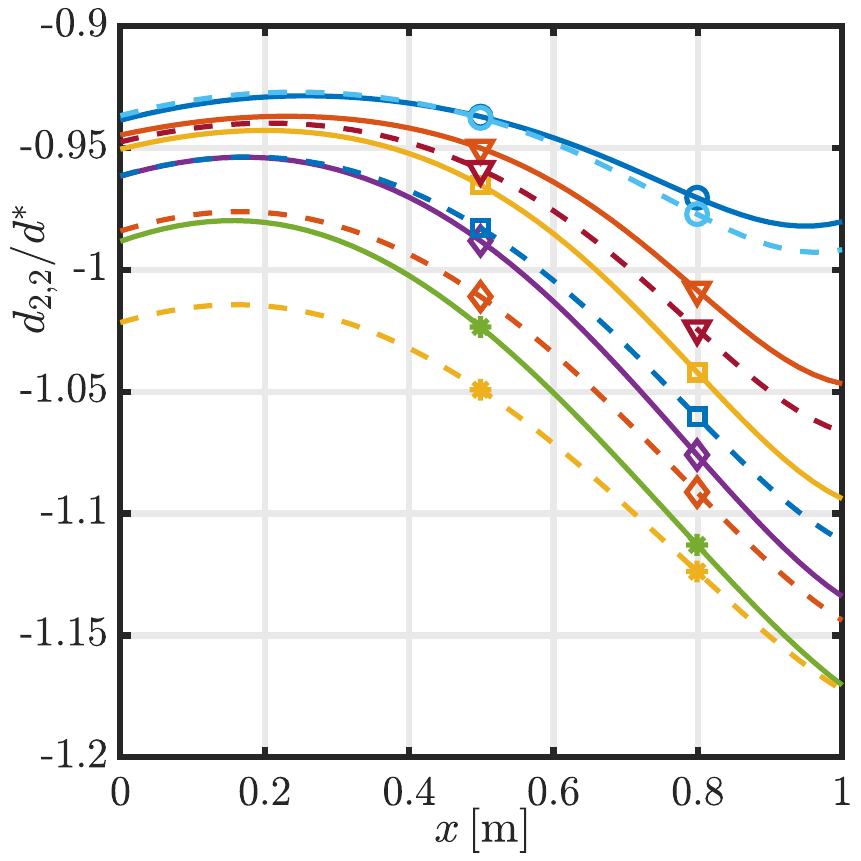}
    \includegraphics[width=0.325\textwidth]{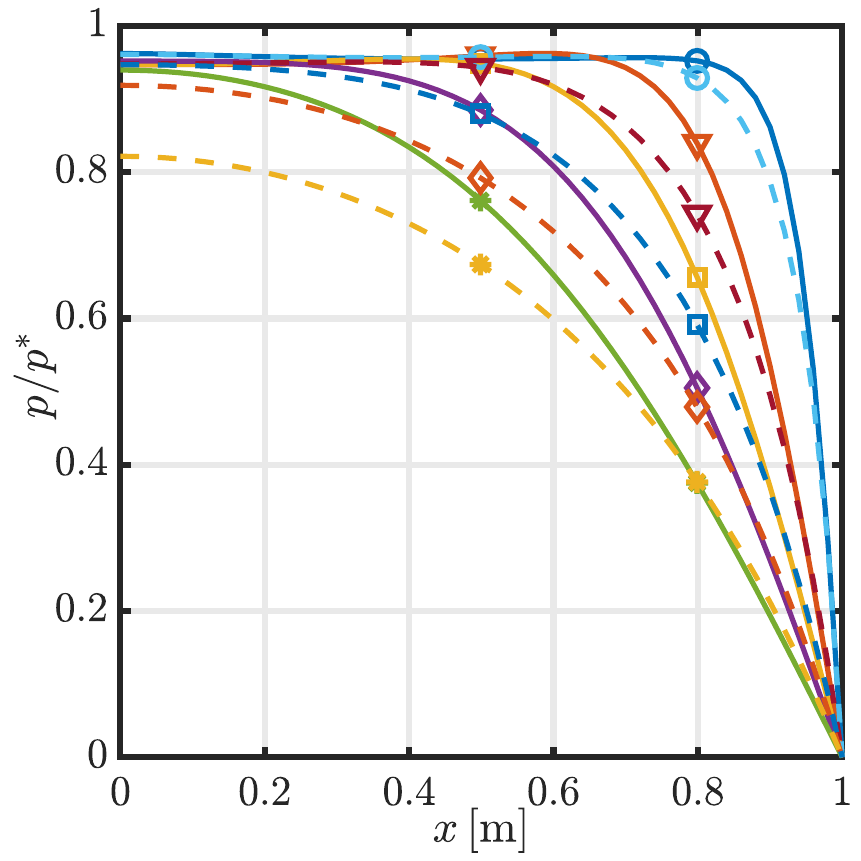}\\
    \includegraphics[width=0.325\textwidth]{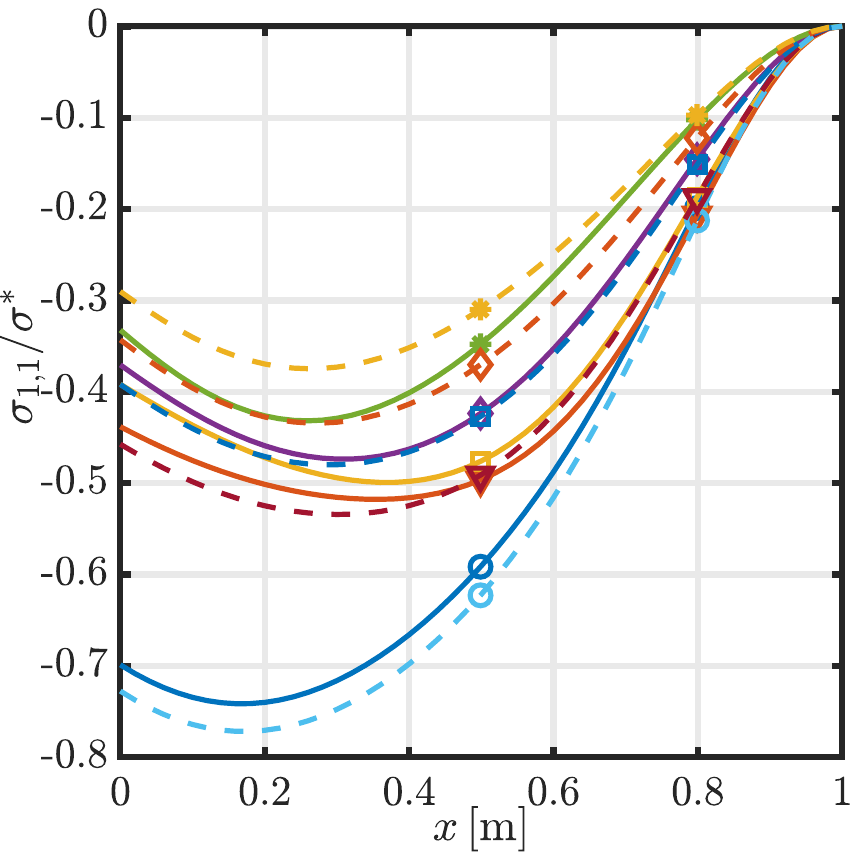}
        \includegraphics[width=0.325\textwidth]{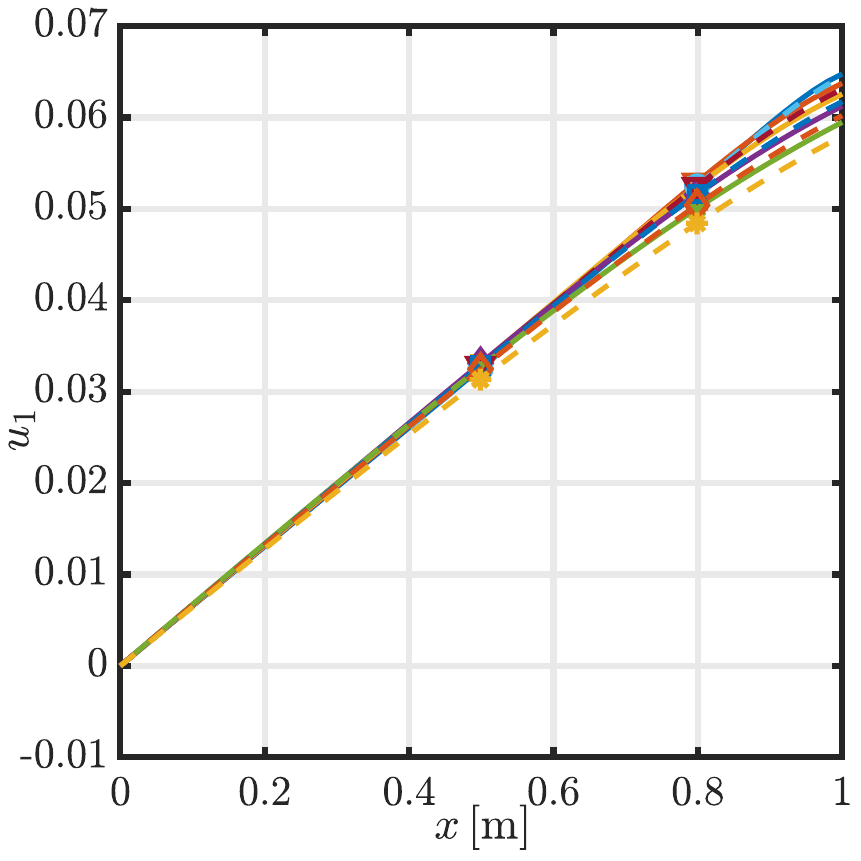}
            \includegraphics[width=0.325\textwidth]{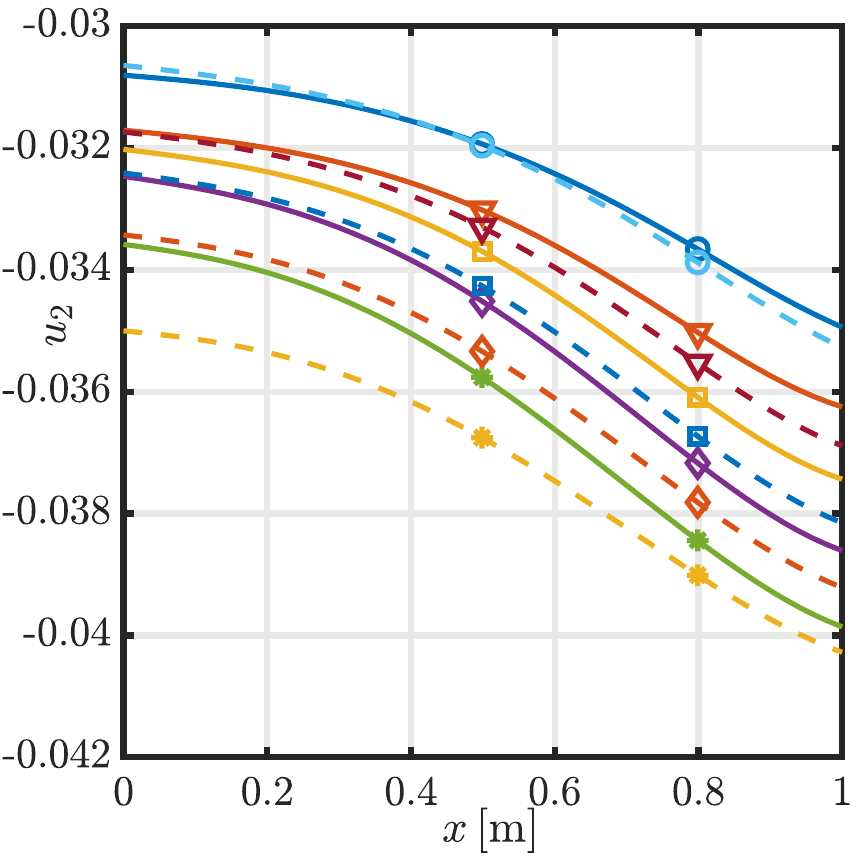}
    \caption{Mandel's test. Plot over the domain horizontal mid-line of axial and radial strains (first and last components of $\bd$) normalised through $d^*=0.07$\,[--], pore pressure profile normalised by $p^*=60$\,[Pa], axial poroelastic stress normalised by $\sigma^* = 4$\,[Pa], and patterns of horizontal an vertical velocities (in [m]), for the constant and nonlinear permeability cases. The legend in the top-left panel (indicating the different times) applies to all panels in the figure.}
    \label{fig:ex04}
\end{figure}

\begin{figure}[!t]
    \centering
    \includegraphics[width=0.325\textwidth]{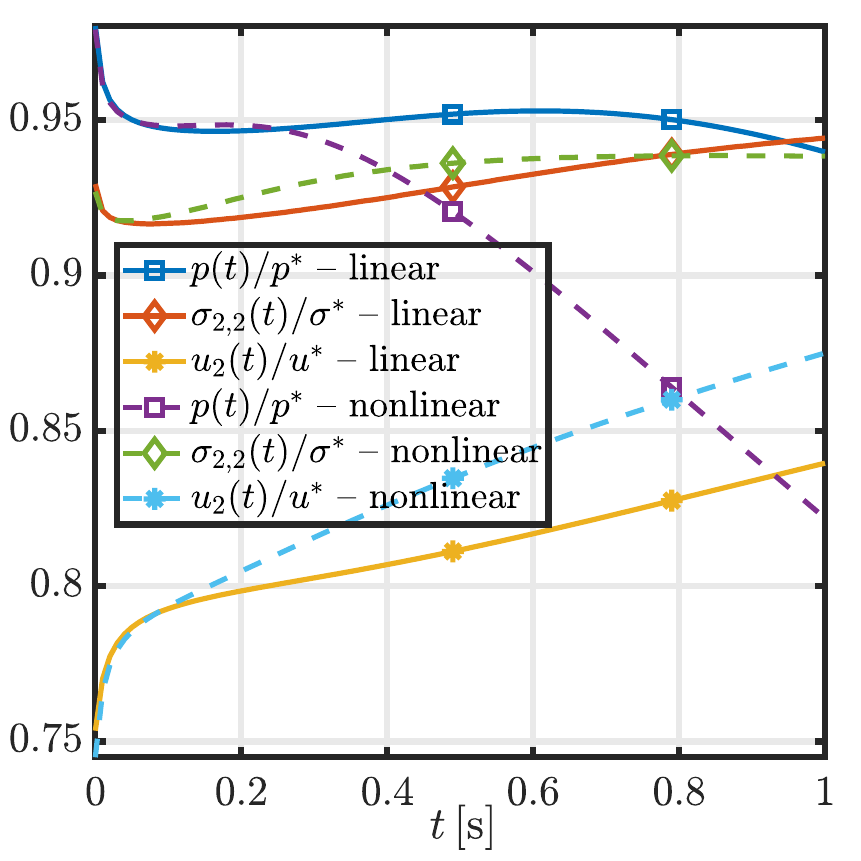}
    \includegraphics[width=0.325\textwidth]{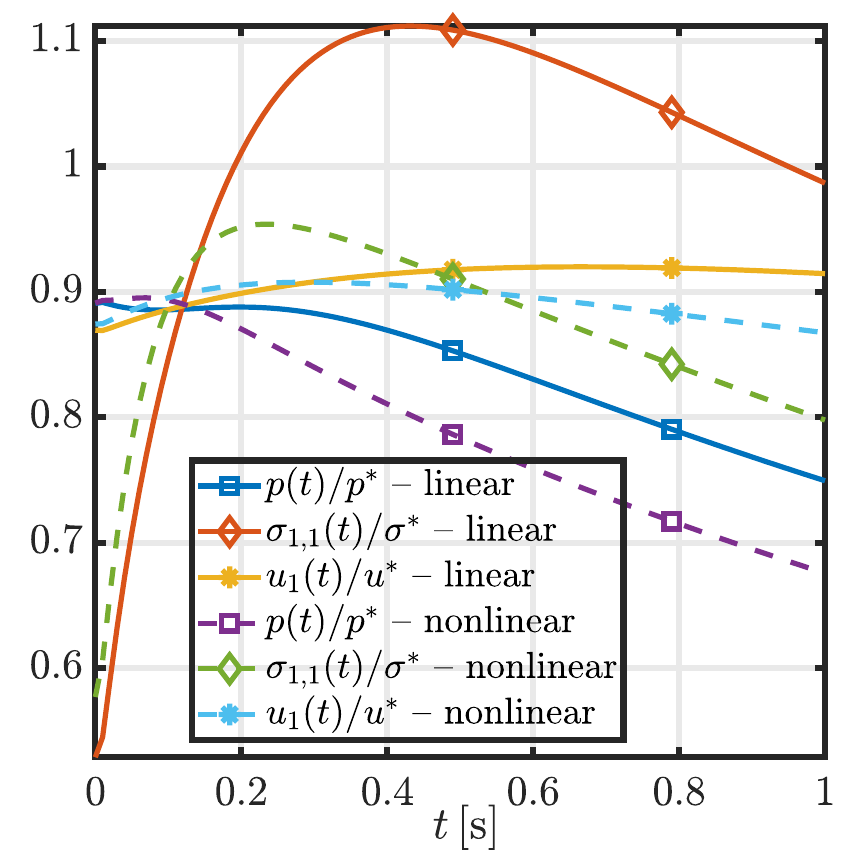}
    {\raisebox{4.5mm}{\includegraphics[width=0.325\textwidth]{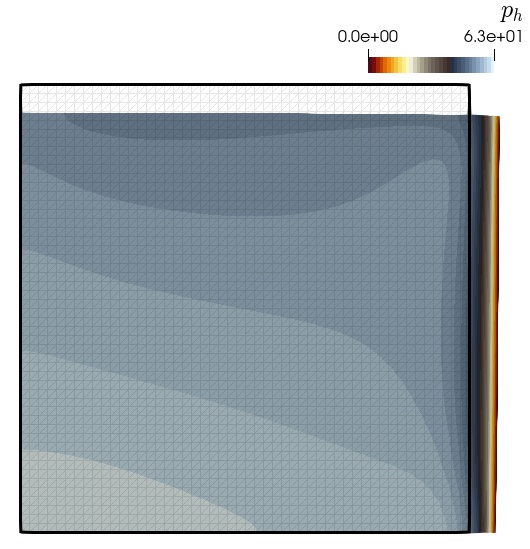}}}
    \caption{Mandel's test. Variation of pore pressure normalised with $p^*=60$\,[Pa], poroelastic stress (axial component normalised with $\sigma^*=-100$\,[Pa] and radial component with $\sigma^*=8$\,[Pa]) and displacement (vertical component normalised with $u^* = -0.04$\,[m] and horizontal component with $u^* = 0.04$\,[m]) against time for the constant and nonlinear permeability cases, and recorded at the points $(0,H/2)$ (left) and $(L/2,H)$ (middle). The right panel shows the patterns of pore pressure at the final time on the deformed configuration.}
    \label{fig:ex04b}
\end{figure}

\bibliographystyle{siam}
\bibliography{total}

\end{document}